\numberwithin{equation}{section}
\numberwithin{figure}{section}
\theoremstyle{plain}
\newtheorem{theorem}{Theorem}[section]
\newtheorem*{theorem-non}{Theorem}
\newtheorem{proposition}[theorem]{Proposition}
\newtheorem{definition}[theorem]{Definition}
\newtheorem{lem}[theorem]{Lemma}
\newtheorem{cor}[theorem]{Corollary}
\theoremstyle{remark}
\newtheorem{remark}[theorem]{Remark}
\newtheorem{example}[theorem]{Example}
\newcommand{\Hom}{\textnormal{Hom}}
\newcommand{\Ker}{\textnormal{Ker}}
\newcommand{\ExpSig}{\textnormal{ExpSig}}
\newcommand{\II}{\mathcal{I}}
\newcommand{\CC}{\mathcal{C}}
\newcommand{\PP}{\mathcal{P}}
\newcommand{\DD}{\mathcal{D}}
\newcommand{\EE}{\mathcal{E}}
\newcommand{\UU}{\mathcal{U}}
\newcommand{\NN}{\mathcal{N}}
\newcommand{\KK}{\mathcal{K}}
\newcommand{\A}{\mathcal{A}}
\newcommand{\R}{\mathbb R}
\newcommand{\C}{\mathbb C}
\newcommand{\N}{\mathbb N}
\newcommand{\F}{\mathbb F}
\newcommand{\X}{\mathbf X}
\newcommand{\Y}{\mathbf Y}
\newcommand{\x}{\mathbf x}
\newcommand{\y}{\mathbf y}
\newcommand{\hh}{\mathbf h}
\newcommand{\pvar}{p\textnormal{-var}}
\newcommand{\pprimevar}{p'\textnormal{-var}}
\newcommand{\aHol}{\alpha\textnormal{-H{\"o}l}}
\newcommand{\aprimeHol}{\alpha'\textnormal{-H{\"o}l}}
\newcommand{\betaHol}{\beta\textnormal{-H{\"o}l}}
\newcommand{\pHol}{1/p\textnormal{-H{\"o}l}}
\newcommand{\EEE}[1]{\mathbb E \left[ #1 \right]}
\newcommand{\PPP}[1]{\mathbb{P} \left[ #1 \right]}
\newcommand{\spn}[1]{\textnormal{span}\left( #1 \right)}
\newcommand{\norm}[1]{\left|\left|#1 \right|\right|}
\newcommand{\normc}{\norm{\cdot}}
\newcommand{\1}{\mathbf 1}
\newcommand{\LLL}{\mathbf L}
\newcommand{\Lyons}{S}
\newcommand{\uu}{\mathfrak u}
\newcommand{\g}{\mathfrak g}
\newcommand{\gl}{\mathfrak {gl}}
\newcommand{\su}{\mathfrak{su}}
\newcommand{\symp}{\mathfrak{sp}}
\newcommand{\gen}[1]{\langle #1 \rangle}
\newcommand{\floor}[1]{\lfloor #1 \rfloor}
\def\eqd{\,{\buildrel \DD \over =}\,}
\def\convd{\,{\buildrel \DD \over \rightarrow}\,}
\begin{document}

\title[The char function for a random path]{Characteristic functions of measures on geometric rough paths}

%    Information for first author
\author{Ilya Chevyrev}
%    Address of record for the research reported here
\address{I. Chevyrev,
Mathematical Institute,
University of Oxford,
Andrew Wiles Building,
Radcliffe Observatory Quarter,
Woodstock Road,
Oxford OX2 6GG,
United Kingdom}
%    Current address
%\curraddr{Department of Mathematics and Statistics, Case Western Reserve University, Cleveland, Ohio 43403}
\email{chevyrev@maths.ox.ac.uk}
%    \thanks will become a 1st page footnote.
\thanks{Supported by the University of Oxford Clarendon Fund Scholarship.}

%    Information for second author
\author{Terry Lyons}
\address{T. Lyons,
Oxford-Man Institute of Quantitative Finance,
University of Oxford,
Walton Well Road,
Oxford OX2 6ED,
United Kingdom}
\email{tlyons@maths.ox.ac.uk}
\thanks{Supported by ERC (Grant Agreement No.291244 Esig) and by the Oxford-Man Institute of Quantitative Finance.}

%    General info
\subjclass[2010]{Primary 60B11; Secondary 43A05}
%60B05 Probability measures on topological spaces
%60B15 Probability measures on groups or semigroups, Fourier transforms, factorization
%46H15 Representations of topological algebras
%22D30 Induced representations
%46A11 Spaces determined by compactness or summability properties (nuclear spaces, Schwartz spaces, Montel spaces, etc.)

%\date{November 09, 2014}
% and, in revised form, June 22, 2001.}

%\dedicatory{This paper is dedicated to our advisors.}

\keywords{Rough paths, expected signature}

\begin{abstract}
We define a characteristic function for probability measures on the signatures of geometric rough paths. We determine sufficient conditions under which a random variable is uniquely determined by its expected signature, thus partially solving the analogue of the moments problem. We furthermore study analyticity properties of the characteristic function and prove a method of moments for weak convergence of random variables. We apply our results to signature arising from L{\'e}vy, Gaussian and Markovian rough paths.
\end{abstract}

\maketitle

\section{Introduction}

Paths serve as a natural description of an ordered progression of events and are abundant throughout mathematics. Furthermore, measures on paths are almost as common in nature as paths. Considering the flow of infinitesimal elements, one sees that any system involving rigid motions can be represented as a measure on paths; the same can be said of a gas or fluid flow. For this reason, the ability to characterize paths, and measures on them, becomes of value.

It was first shown by Chen~\cite{Chen58} that an irreducible piecewise regular continuous path in Euclidean space (which includes all paths that are smooth when parameterized at unit speed) may be faithfully represented, up to reparametrization, by the collection of its iterated integrals known as the signature.
The representation of a path through its signature has been recently explored in much greater detail due to its connection with rough paths theory~\cite{Lyons07}. The exact geometric equivalence of paths of bounded variation possessing the same signature was first described by Hambly and Lyons~\cite{Hambly10}, and recently extended to all geometric rough paths~\cite{Boedihardjo14}. Methods to recover information encoded by the signature have also been explored and, in general, pose a difficult problem~\cite{LyonsXu14}.

The signature may be viewed concretely as the universal solution to the exponential differential equation $d\Lyons(X)_t = \Lyons(X)_t \otimes dX_t$, and serves as the fully non-commutative analogue of the classical exponential function for points in $\R$. Its importance is further emphasized when one considers a general differential equation
\begin{equation}\label{eq_diffEq}
dY_t = M(Y_t)dX_t,
\end{equation}
since the solution $Y_t$ is invariant under reparametrizations of the driving signal $X_t$.
This relationship is most evident in the case of linear differential equations, where $X_t \in V$ and $Y_t \in W$ lie in Banach spaces, and $M : V \mapsto \LLL(W)$ is a continuous linear map. In this case the extension of $M$ to an algebra homomorphism $M : T(V) \mapsto \LLL(W)$, when applied to the signature of $X_t$, provides a series converging rapidly to the flow of~\eqref{eq_diffEq}~\cite{Lyons07}. In particular, when $M$ takes values in a Lie algebra, the flow of~\eqref{eq_diffEq} corresponds to the Cartan development of $X_t$ in the corresponding Lie group, thus naturally inducing a representation of the group of signatures.

In the case of a one-dimensional path $X_t$ in $\R$, the signature takes the simple form $(1, X_t-X_0, (X_t-X_0)^2/2!, \ldots)$. When $X_t$ is a random variable, the sequence of expectations $(1, \EEE{X_t-X_0}, \EEE{(X_t-X_0)^2/2!}, \ldots)$, whenever it exists, describes precisely the moments of $X_t-X_0$. Thus, for a stochastic process $X_t$, the expectations of its iterated integrals, termed the expected signature, naturally form the generalization of the moments of the process.

The expected signature has been exploited in high order approximation schemes~\cite{LittererLyons12} and is explicitly known for certain stochastic processes~\cite{FrizShekhar12, LyonsNi14}. Moreover, the fundamental property that every polynomial function on signatures may be realized as a linear functional implies that the expected signature distinguishes any two random variables of compact support~\cite{Fawcett03} and implicitly demonstrates the potential of the path signature in applications to numerical analysis and machine learning~\cite{Lyons14, LyonsVictoir}.

The moments of a random variable are of course closely related to the characteristic function $\phi_X(\lambda) = \EEE{e^{i \lambda X}}$. For a topological group $G$, a classical extension of the characteristic function to a $G$-valued random variable $X$ is $\phi_X(M) = \EEE{M(X)}$ where $M$ is a unitary representation of $G$~\cite{Heyer77}. Under suitable conditions, particularly the existence of sufficiently many unitary representations, $\phi_X$ uniquely determines the law of $X$.

This paper aims to study the characteristic function $\phi_X(M) = \EEE{M(X)}$ where $X$ is a random signature and $M$ is a unitary representation arising from a linear map $M : V \mapsto \uu$ into a unitary Lie algebra. Our main result asserts that $\phi_X$ uniquely determines every random variable $X$ and greatly extends the analogous result for the expected signature beyond the case of compact support.

We now briefly outline the structure of the paper. Section~\ref{sec_universalAlg} studies a universal topological algebra $E(V)$ in which we embed the group of signatures. Roughly speaking, the induced topology is such that a sequence of signatures converges if and only if the solution to~\eqref{eq_diffEq} converges for every continuous linear map $M: V \mapsto \LLL(W)$. In Section~\ref{sec_groupLike} we derive important properties of probability measures on the set $G(V)$ of group-like elements of $E(V)$. In Section~\ref{sec_reps} we study representations of $E(V)$. Our first main result is Theorem~\ref{thm_sepPointsUnitary}, which describes explicitly a family of representations of $E(\R^d)$ which preserves unitary elements and separates the points. An immediate consequence is that one is able to define a meaningful characteristic function for $G(\R^d)$-valued random variables (Corollary~\ref{cor_uniqueMeasure}). Though our results for uniqueness of random variables are restricted to the case $V=\R^d$, we mostly work in the general setting of Banach spaces and make precise whenever finite dimensionality is required.

In Section~\ref{sec_signature} we recall elements of rough paths theory and show that the signatures of geometric rough paths form a topological subgroup of $G(\R^d)$. In Section~\ref{sec_expectedSign} we describe applications of our results to stochastic rough paths, particularly in connection with the expected signature. We split Section~\ref{sec_expectedSign} into three parts.

In Section~\ref{subsec_momentsProb} we study the analogue of the moments problem.
Proposition~\ref{prop_infRadUnique} provides a general criterion under which a $G(\R^d)$-valued random variable is uniquely determined by its expected signature. In turn, Theorem~\ref{thm_r1Pos} provides a method to verify this criterion without explicit knowledge of the expected signature itself. We demonstrate applications of these results to the L{\'e}vy-Khintchine formula derived in~\cite{FrizShekhar12} and to families of Gaussian and Markovian rough paths studied in~\cite{CassLittererLyons13} and~\cite{CassOgrodnik15}.

In Section~\ref{subsec_analyticity} we study analyticity properties of the characteristic function. The main result is Theorem~\ref{thm_charFuncAnalytic} (and its Corollaries~\ref{cor_controlOnMp} and~\ref{cor_uniqueInPhi}), which provides a criterion to establish analyticity of the characteristic function and solve the moments problem within a restricted family of random variables. We demonstrate an application to Markovian rough paths stopped upon exiting a domain.

In Section~\ref{subsec_convMeas} we conclude with Theorem~\ref{thm_methodMoments}, which demonstrates a method of moments for weak convergence of $G(\R^d)$-valued random variables.

\subsection*{Acknowledgments}

The authors would like to thank Dr. Ni Hao and Prof. Peter Friz for numerous constructive discussions and suggestions, particularly during the Berlin-Oxford Meetings on Applied S.A. in 2013/2014. The authors would also like to thank Prof. Thierry L{\'e}vy for several helpful conversations, and Dr. Horatio Boedihardjo for valuable comments on an earlier draft.

\section{Universal locally \texorpdfstring{$m$}{m}-convex algebra}\label{sec_universalAlg}

Throughout the paper, all vector spaces are assumed real and all algebras are assumed unital. For topological vector spaces $V, W$, let $\LLL(V,W)$ be the space of continuous linear maps from $V$ to $W$, and denote $\LLL(V) = \LLL(V,V)$ and $V' = \LLL(V, \R)$. For terminology and basic properties of topological algebras we refer to~\cite{Mallios86}.

For a topological vector space $V$, a topological algebra $A$, and a topology on $T(V) = \bigoplus_{k \geq 0}V^{\otimes k}$, consider the statement:
\begin{equation}\label{eq_extensionCont}
\textnormal{For all $M \in \LLL(V, A)$, the extension $M: T(V) \mapsto A$ is continuous}.
\end{equation}

One may then topologize $T(V)$ by requiring that~\eqref{eq_extensionCont} holds for all topological algebras $A$ of a given category. In this paper we consider the category of locally $m$-convex algebras.

\begin{definition}
Let $V$ be a locally convex space. Let $E_a(V) = T(V)$ equipped with the coarsest topology such that~\eqref{eq_extensionCont} holds for all locally $m$-convex algebras $A$ (or equivalently, all normed algebras $A$). Denote by $E(V)$ the completion of $E_a(V)$.
\end{definition}

Thus for any normed algebra $A$, the set of continuous algebra homomorphisms $\Hom(E_a,A)$ is in bijection with $\LLL(V,A)$. For any $M \in \LLL(V,A)$ we shall usually denote by the same letter $M$ the corresponding element in $\Hom(E_a,A)$, but shall write $M_E \in \Hom(E_a,A)$ whenever a clear distinction is needed.

Though in most parts of the paper we shall assume that $V$ is normed, most results in this section are more easily understood for locally convex spaces and so unless stated otherwise, we only assume $V$ is locally convex.

In most of our notation, we shall drop the reference to $V$ when it is clear from the context. It holds that $E_a$ and $E$ are locally $m$-convex algebra (\cite{Mallios86} p.14, p.22). While most results in this section are stated for $E$, it is easy to verify which remain valid for $E_a$.

This method to obtain a universal topological algebra of a specific category is very natural, and we note that this construction is not new; the same construction (and essentially Proposition~\ref{prop_topEqual} below) appeared in~\cite{Cuntz97} in relation to cyclic cohomology, while analogous constructions were investigated for locally convex algebras with continuous multiplication in~\cite{Valqui01} and for commutative locally $m$-convex algebras (particularly in relation to nuclear spaces) in~\cite{Dineen81} Section~6.4.

\begin{remark}
If we start with $V$ as a general topological vector space, an easy verification shows that we arrive at the same space $E_a$ as when we equip $V$ with the finest locally convex topology coarser than its original.
\end{remark}

A family of semi-norms $\Psi$ on $V$ is called \emph{fundamental} if for every semi-norm $\xi$ on $V$, there exist $\gamma \in \Psi$ and $\varepsilon > 0$ such that $\varepsilon\xi \leq \gamma$ (note that by a semi-norm we always mean a continuous semi-norm).
For any collection of semi-norms $\Psi$ on $V$, define $\Psi^* = \{n \gamma \mid n \geq 1, \gamma \in \Psi\}$.

For semi-norms $\gamma,\xi$ on locally convex spaces $V,W$ respectively, let $\gamma\otimes\xi$ denote the projective semi-norm on $V\otimes W$.  Denote by $V \otimes_\pi W$ the projective tensor product and $V\widehat\otimes W$ its completion. For a normed space $F$, and $M \in \LLL(V,F)$ denote $\gamma(M) = \sup_{\gamma(v) = 1} \norm{M v}$ (possibly infinite).

Define the \emph{projective extension} of a semi-norm $\gamma$ on $V$ as the semi-norm $\exp(\gamma) = \sum_{k \geq 0} \gamma^{\otimes k}$ on $E_a$. Remark that $\exp(\gamma)$ is a sub-multiplicative semi-norm on $E_a$. Moreover for any normed algebra $A$, $M \in \LLL(V,A)$, and a semi-norm $\gamma$ on $V$ such that $\gamma(M) \leq 1$, it holds that $\exp(\gamma)(M_E) \leq 1$. We thus readily obtain the following.

\begin{proposition}\label{prop_topEqual}
Let $\Psi$ be a family of semi-norms on $V$. Then $\Psi$ is a fundamental family of semi-norms on $V$ if and only if $\exp(\Psi^*)$ is a fundamental family of semi-norms on $E$.
\end{proposition}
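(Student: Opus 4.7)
The plan is to derive the Proposition from two elementary facts: (i) the family $\FF = \{\exp(\xi) : \xi \text{ a continuous semi-norm on } V\}$ is itself fundamental on $E$, and (ii) the projective extension is monotone in its argument, i.e., $\xi \leq \gamma$ implies $\exp(\xi) \leq \exp(\gamma)$. Fact (ii) is immediate from the corresponding monotonicity of the projective tensor semi-norm: if $\xi \leq \gamma$ then $\xi^{\otimes k} \leq \gamma^{\otimes k}$ on $V^{\otimes k}$ by taking infima over representations, and summing in $k$ gives $\exp(\xi) \leq \exp(\gamma)$.

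For fact (i), the universal property defining $E_a$, combined with the fact that a locally $m$-convex algebra is topologized by (normalized) sub-multiplicative semi-norms, shows that the topology of $E_a$ is generated by semi-norms of the form $x \mapsto p(M(x))$ for $M \in \LLL(V,A)$, $A$ a locally $m$-convex algebra, and $p$ sub-multiplicative on $A$; setting $\xi(v) = p(Mv)$, the remark immediately preceding the Proposition (that $\gamma(M) \leq 1$ forces $\exp(\gamma)(M) \leq 1$) then yields $p(M(x)) \leq \exp(\xi)(x)$, so every generating semi-norm on $E_a$ is dominated by an $\exp(\xi) \in \FF$. Conversely, each $\exp(\xi) \in \FF$ is itself continuous on $E_a$: since $\exp(\xi)$ is sub-multiplicative, the completion of the quotient of $E_a$ by the null space of $\exp(\xi)$ is a Banach algebra onto which $V$ maps continuously of norm at most $\xi$, and its algebra extension is precisely the quotient map, which is therefore continuous and pulls back the Banach norm to $\exp(\xi)$.

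Granting (i) and (ii), the forward direction is immediate: for any continuous semi-norm $\eta$ on $E$, fact (i) yields a continuous semi-norm $\xi$ on $V$ with $\eta \leq \exp(\xi)$; fundamentality of $\Lambda$ yields $\gamma \in \Lambda$ with $\xi \leq \gamma$; and fact (ii) yields $\exp(\xi) \leq \exp(\gamma)$, so $\eta \leq \exp(\gamma)$. For the backward direction, given any continuous semi-norm $\xi$ on $V$, its projective extension $\exp(\xi)$ is continuous on $E_a$, hence on $E$, by fact (i); fundamentality of $\exp(\Lambda)$ then provides $\gamma \in \Lambda$ with $\exp(\xi) \leq \exp(\gamma)$, and restricting both sides to the degree-one component $V \subset E_a$ — where $\exp(\xi)$ restricted to $V$ equals $\xi$ and similarly for $\gamma$ — gives $\xi \leq \gamma$, as required.

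The only delicate point is fact (i), where one must carefully manage constants and the degree-zero (scalar) part of $T(V)$ in dominating an arbitrary generating semi-norm by a single element of $\FF$; this is handled by working with normalized sub-multiplicative semi-norms (so that $p(1_A) \in \{0, 1\}$) and using that the algebra extension $T(V) \to A$ sends the unit $1 \in T(V)$ to $1_A$. Beyond this verification, both implications are formal consequences of monotonicity and restriction.
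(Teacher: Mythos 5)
Your proposal is, in substance, the argument the paper intends: the paper gives no proof beyond the remark that $\exp(\gamma)$ is sub-multiplicative and that $\gamma(M)\leq 1$ forces $\exp(\gamma)(M)\leq 1$, and your facts (i)--(ii), the continuity of each $\exp(\xi)$ via the Banach algebra obtained by quotienting $T(V)$ by the null space of $\exp(\xi)$ and completing (whose quotient map is precisely the algebra extension of the inclusion of $V$), and the restriction-to-$V$ step for the converse are exactly a reconstruction of that sketch.

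The one place you overclaim is fact (i) under the paper's strict definition of ``fundamental'' (pointwise domination, no constants). Your normalization $p(1_A)\in\{0,1\}$ only takes care of the generating semi-norms $p\circ M_E$; an arbitrary continuous semi-norm on $E$ can be a scalar multiple of these, and such a constant cannot be absorbed at degree zero: for instance $\eta(x)=2|x^0|$ (or $2\exp(\xi_0)$ for any fixed $\xi_0$) is continuous on $E$, yet $\exp(\xi)(1)=1$ for every semi-norm $\xi$ on $V$, so no single $\exp(\xi)$ dominates $\eta$ pointwise. This, however, is a defect of the literal statement rather than of your route: the same example shows the proposition itself can only hold with domination up to a constant on the scalar component, which is all that is needed in its later uses (e.g.\ Corollaries~\ref{cor_permanence} and~\ref{cor_radiusOfConv}), and the paper glosses over the point entirely. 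Read ``fundamental'' in that intended sense and your proof is complete and coincides with the paper's; just do not assert the strict form of (i).
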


\begin{cor}\label{cor_permanence}
The space $E$ is Hausdorff (resp. metrizable, separable) if and only if $V$ is Hausdorff (resp. metrizable, separable).
\end{cor}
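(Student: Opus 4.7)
The plan is to combine Proposition~\ref{prop_topEqual} with the observation that $V$ sits inside $E$ as a topological subspace: for any $v \in V \subset T(V) = E_a$ and any continuous semi-norm $\gamma$ on $V$, one has $\exp(\gamma)(v) = \gamma(v)$. By Proposition~\ref{prop_topEqual}, as $\gamma$ ranges over a fundamental family on $V$, the restrictions $\exp(\gamma)|_V$ form a fundamental family on $V$ viewed as a subspace of $E$, so this inclusion is a topological embedding. The converse direction (property of $E$ implies the property of $V$) then follows because being Hausdorff or metrizable is inherited by arbitrary topological subspaces, and separability is inherited by subspaces in the metrizable case.

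For the forward implications I would translate each property via Proposition~\ref{prop_topEqual}. If $V$ is Hausdorff, take a fundamental family $\Lambda$ on $V$ that separates points; for any nonzero $x = \sum_k x_k \in E_a$, choose the minimal $k_0$ with $x_{k_0} \neq 0$. Using the standard fact that the projective tensor product of Hausdorff locally convex spaces is Hausdorff, some $\gamma \in \Lambda$ satisfies $\gamma^{\otimes k_0}(x_{k_0}) > 0$, so $\exp(\gamma)(x) > 0$; thus $E_a$ is Hausdorff and hence so is its completion $E$. Metrizability of a Hausdorff locally convex space amounts to admitting a countable fundamental family, and Proposition~\ref{prop_topEqual} converts a countable fundamental $\Lambda$ on $V$ directly into a countable fundamental $\exp(\Lambda)$ on $E$. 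For separability, given a countable dense $D \subset V$, the countable $\Q$-subalgebra $D'$ of $T(V)$ generated by $D$ is dense in $E_a$: each tensor component of an element of $E_a$ is a finite sum of simple tensors, approximable in $\gamma^{\otimes k}$ by elements of the $\Q$-span of $D^{\otimes k}$ by continuity of the projective tensor product. Since $E_a$ is dense in $E$, $D'$ is dense in $E$.

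The main obstacle I expect is the separability converse outside the metrizable case, since subspaces of separable locally convex spaces need not be separable in general. One workaround is to use the continuous linear map $\pi_1 : E_a \to V$ (whose continuity follows from $\gamma(\pi_1(x)) \leq \exp(\gamma)(x)$), extended to a continuous $\pi_1 : E \to \hat V$ where $\hat V$ denotes the completion of $V$, and to transfer a countable dense subset of $E$ to one of $V$ along this map. The Hausdorff step also relies on the Hausdorff property of projective tensor products, which is a standard but non-trivial input.
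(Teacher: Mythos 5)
Your overall route---identify $V$ isometrically inside $E$ via $\exp(\gamma)|_V=\gamma$ and transfer countable fundamental families and countable dense sets through Proposition~\ref{prop_topEqual}---is exactly the derivation the paper intends (it gives no separate proof of this corollary), and your forward implications together with the Hausdorff and metrizability converses are correct as you argue them.

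The gap is in your workaround for the separability converse outside the metrizable case. The projection $\pi_1=\rho^1$ maps $E$ onto $V^{\widehat\otimes 1}=\widehat V$, so pushing a countable dense subset of $E$ through it yields a countable dense subset of $\widehat V$, not of $V$; density in $\widehat V$ gives separability of $V$ only when $V$ is metrizable, which is precisely the case you were trying to go beyond. Moreover, no repair along these lines is possible: all the data used to build $E$ extend canonically from $V$ to $\widehat V$, and one has $E(V)=E(\widehat V)$ as topological algebras (for instance via Corollary~\ref{cor_radiusOfConv}, using that $V^{\widehat\otimes k}=\widehat V^{\widehat\otimes k}$ and that $\gamma^{\otimes k}$ agrees on $V^{\otimes k}$ with the extension of $\widehat\gamma^{\otimes k}$), so $E$ simply cannot distinguish separability of $V$ from that of $\widehat V$. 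Concretely, take $V$ to be the dense subspace of countably supported elements of $\R^{[0,1]}$: then $V$ is not separable, $\widehat V=\R^{[0,1]}$ is separable, and $E(V)=E(\R^{[0,1]})$ is separable by your own forward argument. So you should either restrict the separability converse to metrizable $V$---the only case the paper actually uses (to conclude that $E$ is Polish), where the subspace argument you already give suffices---or note explicitly that in full locally convex generality this part of the statement can only be read at the level of $\widehat V$.
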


Whenever we speak of a topological space, we shall henceforth always assume it is Hausdorff.
The following result identifies $E$ with a subspace of $P(V) := \prod_{k \geq 0}V^{\widehat \otimes k}$. For $x \in P$, we write $x^k$ for the projection of $x$ onto $V^{\widehat\otimes k}$, so that $x=(x^0, x^1, x^2,\ldots)$.

\begin{cor}\label{cor_radiusOfConv}
Let $\Psi$ be a fundamental family of semi-norms on $V$. Then $E = \{x \in P \mid \forall \gamma \in \Psi^*, \sum_{k \geq 0} \gamma^{\otimes k}(x^k) < \infty \}$.
\end{cor}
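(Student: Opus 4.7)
The plan is to realize $E$ explicitly as the subspace
\[
F := \Bigl\{x \in P \;\Big|\; \forall \gamma \in \Lambda,\; \exp(\gamma)(x) := \sum_{k \geq 0} \gamma^{\otimes k}(x^k) < \infty \Bigr\}
\]
of $P$, equipped with the locally convex topology generated by the semi-norms $\exp(\gamma)$ for $\gamma \in \Lambda$. Since by Proposition~\ref{prop_topEqual} the family $\exp(\Lambda)$ is fundamental on $E$, and since $V$ (hence $E$) is Hausdorff by assumption and Corollary~\ref{cor_permanence}, it suffices to exhibit a natural inclusion $E_a = T(V) \hookrightarrow F$ whose image is dense and whose subspace topology coincides with the defining topology of $E_a$, and to verify that $F$ itself is complete; $F$ will then be the (unique) Hausdorff completion of $E_a$, i.e.\ $E$.

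First, the inclusion $T(V) \subseteq F$ is immediate because every element of $T(V)$ has only finitely many nonzero components, so the series defining $\exp(\gamma)(x)$ terminates. By definition, the topology on $E_a$ is the one generated by $\exp(\Lambda)$, which agrees with the subspace topology inherited from $F$. For density, given $x \in F$ let $x_N = (x^0, \ldots, x^N, 0, 0, \ldots) \in T(V)$; then for each $\gamma \in \Lambda$,
\[
\exp(\gamma)(x - x_N) = \sum_{k > N} \gamma^{\otimes k}(x^k) \xrightarrow[N\to\infty]{} 0
\]
as the tail of a convergent series.

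The main step is completeness of $F$. Let $(x_n)$ be a Cauchy net in $F$. For each fixed $k$, the estimate $\gamma^{\otimes k}(x_n^k - x_m^k) \leq \exp(\gamma)(x_n - x_m)$ (valid for every $\gamma \in \Lambda$) shows $(x_n^k)$ is Cauchy in $V^{\widehat\otimes k}$, hence converges to some $y^k$; set $y = (y^k)_{k\geq 0} \in P$. To show $y \in F$, fix $\gamma \in \Lambda$ and use that $(x_n)$ is bounded, say $\exp(\gamma)(x_n) \leq C$. For every $K \geq 0$,
\[
\sum_{k=0}^K \gamma^{\otimes k}(y^k) = \lim_{n} \sum_{k=0}^K \gamma^{\otimes k}(x_n^k) \leq C,
\]
and letting $K \to \infty$ gives $\exp(\gamma)(y) \leq C < \infty$. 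To show $x_n \to y$, fix $\epsilon > 0$ and choose $n_0$ with $\exp(\gamma)(x_n - x_m) < \epsilon$ for all $n, m \geq n_0$; then for every $K$ and every $n \geq n_0$,
\[
\sum_{k=0}^K \gamma^{\otimes k}(x_n^k - y^k) = \lim_m \sum_{k=0}^K \gamma^{\otimes k}(x_n^k - x_m^k) \leq \epsilon,
\]
and passing $K \to \infty$ yields $\exp(\gamma)(x_n - y) \leq \epsilon$.

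The only potential subtlety I anticipate is the identification of the topology: one must be careful that the topology inherited by $T(V)$ as a subspace of $F$ really does match the universal topology on $E_a$, but this is handed to us directly by Proposition~\ref{prop_topEqual}, which states that $\exp(\Lambda)$ is a fundamental family of semi-norms precisely when $\Lambda$ is one for $V$. Given that, the argument is essentially a bookkeeping verification of completeness of a weighted $\ell^1$-type space of sequences in the graded pieces $V^{\widehat\otimes k}$.
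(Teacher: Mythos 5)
Your proof is correct and is essentially the route the paper intends: the corollary is stated as an immediate consequence of Proposition~\ref{prop_topEqual}, and your argument simply fills in the standard verification that the weighted $\ell^1$-type space $F \subset P$ with semi-norms $\exp(\gamma)$ is a complete Hausdorff space containing $T(V)$ densely with the universal topology, hence is the completion $E$. One nitpick: a Cauchy \emph{net} need not be bounded (unlike a Cauchy sequence), so in the completeness step you should instead bound a tail, e.g.\ $\exp(\gamma)(x_n) \leq \exp(\gamma)(x_{n_0}) + 1$ for $n \geq n_0$, which suffices since convergence of a tail of a Cauchy net gives convergence of the whole net.
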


By noting the identification $P^{\widehat \otimes 2} = \prod_{i,j \geq 0} V^{i, j}$, where $V^{i,j} \cong V^{\widehat \otimes (i+j)}$, the same considerations show that
\[
E^{\widehat\otimes 2} = \{x \in P^{\widehat \otimes 2} \mid \forall \gamma \in \Psi^*, \sum_{i,j \geq 0} \gamma^{\otimes (i+j)}(x^{i,j}) < \infty \}.
\]
Let $\rho^k : E \mapsto V^{\widehat\otimes k}$ denote the projection $\rho^k(x) = x^k$. The following result shall also be useful later and is another consequence of Proposition~\ref{prop_topEqual}.

\begin{cor}\label{cor_truncsConv}
The operators $T^{(n)} := \sum_{k=0}^n\rho^k : E \mapsto E$ converge uniformly on bounded sets to the identity operator on $E$.
\end{cor}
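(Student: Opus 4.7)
The plan is to unwind the definitions: by Proposition~\ref{prop_topEqual} the topology on $E$ is generated by the family of semi-norms $\{\exp(\xi) : \xi \text{ a continuous semi-norm on } V\}$, and by Corollary~\ref{cor_radiusOfConv} each element $x\in E$ satisfies $\exp(\xi)(x)=\sum_{k\geq 0}\xi^{\otimes k}(x^k)<\infty$. Since $T^{(n)}$ is the projection onto the first $n+1$ components,
\[
\exp(\xi)\bigl(x - T^{(n)}x\bigr) \;=\; \sum_{k > n}\xi^{\otimes k}(x^k),
\]
so our task is to show that, for every semi-norm $\xi$ and every bounded $B\subset E$,
$\sup_{x\in B}\sum_{k>n}\xi^{\otimes k}(x^k)\to 0$ as $n\to\infty$.

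The key trick is to use the fundamental family $\Lambda$ with a rescaling. Since $2\xi$ is again a continuous semi-norm on $V$, I can pick $\gamma\in\Lambda$ with $\gamma \geq 2\xi$. A standard property of the projective tensor semi-norm then gives $\xi^{\otimes k}\leq 2^{-k}\gamma^{\otimes k}$ on $V^{\otimes k}$, and by continuity on the completion $V^{\widehat\otimes k}$. Boundedness of $B$ in $E$ means, by Proposition~\ref{prop_topEqual}, that
\[
C \;:=\; \sup_{x\in B}\exp(\gamma)(x) \;=\; \sup_{x\in B}\sum_{k\geq 0}\gamma^{\otimes k}(x^k) \;<\; \infty,
\]
so in particular $\gamma^{\otimes k}(x^k)\leq C$ for all $k$ and all $x\in B$.

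Combining these facts,
\[
\sup_{x\in B}\exp(\xi)\bigl(x - T^{(n)}x\bigr) \;\leq\; \sup_{x\in B}\sum_{k>n}2^{-k}\gamma^{\otimes k}(x^k) \;\leq\; C\sum_{k>n}2^{-k} \;=\; C\cdot 2^{-n},
\]
which tends to $0$. This is the required uniform convergence on $B$.

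There is really no serious obstacle here; the only subtlety, and the step worth spelling out, is the choice of $\gamma\geq 2\xi$ rather than merely $\gamma\geq\xi$. That extra factor produces the geometric decay $2^{-k}$, which is what converts the uniform boundedness of $\gamma^{\otimes k}(x^k)$ into a summable tail. Once this is in place, the rest is just the estimate above, and it is also clear that exactly the same argument works for $E_a$ in place of $E$.
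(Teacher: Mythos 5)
Your argument is correct and is essentially the one the paper intends: Corollary~\ref{cor_truncsConv} is presented as a consequence of Proposition~\ref{prop_topEqual}, and your rescaling device (testing boundedness of $B$ against $\exp(\gamma)$ with $\gamma \geq 2\xi$ to turn the uniform bound on $\gamma^{\otimes k}(x^k)$ into a geometrically summable tail) is exactly the trick the paper itself uses, e.g.\ in the proof of Proposition~\ref{prop_contWithStrong}, where boundedness against $\exp(c\gamma)$, $c>1$, yields $\sup_{x\in B}\gamma^{\otimes k}(x^k)\leq\varepsilon^k$ for large $k$. Nothing further is needed.
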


When $V$ is a normed space, we always equip $V^{\otimes k}$ with the projective norm unless stated otherwise. For an element $x \in P$ define its radius of convergence $R(x)$ as the radius of convergence of the series $\sum_{k \geq 0}\norm{x^k}\lambda^k$. Corollary~\ref{cor_radiusOfConv} then implies that $x \in E$ if and only if $R(x) = \infty$.

We now come to a more interesting permanence property. For a semi-normed space $(W, \gamma)$ denote the quotient normed space $W_\gamma = (W/\Ker(\gamma), \gamma)$ and $\widehat W_\gamma$ its completion. For a locally convex space $W$ and a Banach space $A$, a map $M \in \LLL(W, A)$ is called compact (resp. nuclear) if there exists a semi-norm $\gamma$ on $W$ such that the $\gamma(M) < \infty$ and the induced map $M_\gamma : \widehat W_\gamma \mapsto A$ is compact (resp. nuclear). Recall that $W$ is called Schwartz (resp. nuclear) if every $M \in \LLL(W,A)$ is compact (resp. nuclear) for every Banach space $A$.

\begin{proposition}\label{prop_SchwartzNuclear}
The space $E$ is Schwartz (resp. nuclear) if and only if $V$ is Schwartz (resp. nuclear).
\end{proposition}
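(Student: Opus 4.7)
The plan is to reduce both directions of the equivalence to standard permanence properties of nuclear and Schwartz locally convex spaces, using the explicit fundamental family of seminorms on $E$ supplied by Proposition~\ref{prop_topEqual}.

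The easier direction $E \Rightarrow V$ comes first. Consider the degree-one inclusion $\iota: V \to E$ sending $v$ to $(0, v, 0, 0, \ldots)$. Since $\exp(\gamma)(\iota(v)) = \gamma(v)$ for every continuous seminorm $\gamma$ on $V$, the map $\iota$ is a topological embedding onto its image, and $\iota(V) = \bigcap_{k \neq 1} \ker \rho^k$ is closed in $E$ by continuity of the projections $\rho^k$. Since nuclearity and the Schwartz property both pass to closed subspaces, $V$ inherits the corresponding property from $E$.

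For the forward direction $V \Rightarrow E$, fix a fundamental family $\Lambda$ on $V$ and a seminorm $\gamma \in \Lambda$; the goal is to produce $\xi \in \Lambda$ with $\xi \geq \gamma$ such that the canonical map between the associated local Banach spaces $\hat E_{\exp(\xi)} \to \hat E_{\exp(\gamma)}$ is nuclear (resp.\ compact). Using Corollary~\ref{cor_radiusOfConv}, the local Banach space $\hat E_{\exp(\gamma)}$ is naturally isometrically identified with the $\ell^1$-sum of the Banach space tensor powers $\hat V^{\hat\otimes k}_{\gamma^{\otimes k}}$, and similarly for $\exp(\xi)$; under this identification the canonical map acts diagonally as the family $\hat V^{\hat\otimes k}_{\xi^{\otimes k}} \to \hat V^{\hat\otimes k}_{\gamma^{\otimes k}}$. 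By nuclearity (resp.\ Schwartzness) of $V$, one may pick $\xi_0 \in \Lambda$ with $\xi_0 \geq \gamma$ so that $\hat V_{\xi_0} \to \hat V_\gamma$ is nuclear (resp.\ compact); enlarging $\xi_0$ within $\Lambda$ by a sufficiently large scalar (using that $\Lambda$ is fundamental), one arranges the relevant norm of $\hat V_\xi \to \hat V_\gamma$ to be at most $1/2$. Multiplicativity of nuclear norms under projective tensor products (and the analogous stability of the compact operator ideal) then bounds the $k$-th component map by $2^{-k}$, so the diagonal map on $\ell^1$-sums is an absolutely summable series of nuclear (resp.\ compact) operators and is therefore itself nuclear (resp.\ compact).

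The main technical subtlety is the forward direction: the nuclear (resp.\ operator) norms of the induced tensor-power maps must decay geometrically in $k$, otherwise the coordinate-wise direct sum between the $\ell^1$-sums need not inherit the property. This decay is engineered by the rescaling of $\xi_0$, together with the ideal properties of the nuclear and compact operator classes under projective tensor products and absolutely summable series.
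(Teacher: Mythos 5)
Your proof is correct, but it takes a genuinely different route from the paper's. The paper first establishes Lemma~\ref{lem_BanachAlg} via the Arens--Michael decomposition, reducing the Schwartz/nuclear property of $E$ to compactness/nuclearity of every continuous algebra homomorphism into a Banach algebra $A$; by the universal property this leaves a single extension map to control, handled in the nuclear case by Lemma~\ref{lem_nuclearExt} (the same Holub multiplicativity and geometric series you use) and in the Schwartz case by a hull argument carried out entirely inside $A$: since $B_{\exp(\gamma)} = \Gamma(\bigcup_{k\geq 0} B_{\gamma^{\otimes k}})$ and $\gamma(M)<1$, the images of the graded balls shrink geometrically and $M(B_{\exp(\gamma)})$ is totally bounded. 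You instead verify the linking-map definition of the two properties directly on the fundamental family $\exp(\Lambda)$ from Proposition~\ref{prop_topEqual}, identifying $\widehat E_{\exp(\gamma)}$ with the $\ell^1$-sum of the spaces $(\widehat V_\gamma)^{\widehat\otimes k}$ and treating the linking map as a diagonal operator; this bypasses Lemma~\ref{lem_BanachAlg} entirely and is more self-contained, at the price of one ingredient the paper never needs: that projective tensor powers of a compact operator between Banach spaces are compact. That fact is true but asserted without proof, and it deserves a line -- e.g.\ the image of the unit ball of $X\otimes_\pi Y$ under $T\otimes S$ lies in the absolutely convex hull of the relatively compact set $\{Tx\otimes Sy \mid \norm{x},\norm{y}\leq 1\}$, hence is totally bounded; note this is exactly the paper's hull argument, performed there after multiplying into $A$, which is how the paper sidesteps the abstract tensor-stability question (which genuinely fails, for instance, for weakly compact operators). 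Two minor points: in the Schwartz case you only need the componentwise operator norms to tend to zero, not to be summable, and the rescaling of $\xi_0$ must be proportional to the nuclear (resp.\ operator) norm of the chosen linking map of $V$, which your ``sufficiently large scalar'' covers. Your degree-one-embedding argument for the converse direction is a correct expansion of what the paper dismisses as clear.
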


We note that the case when $V$ is simply Schwartz shall not be used later in the paper and is recorded simply for completeness. Moreover nuclearity of $E$ shall only be applied in Section~\ref{subsec_convMeas} to the case $V = \R^d$. However the equivalent statement for $V = \R^d$ uses essentially the same proof and thus we record the result in full generality.

Let $\Psi$ be a fundamental family of sub-multiplicative semi-norms of a locally $m$-convex algebra $F$.
Equipping $\Psi^*$ with its natural partial order, $(\widehat F_\gamma)_{\gamma \in \Psi^*}$ is a projective system of Banach algebras and one obtains a dense topological algebra embedding $F \hookrightarrow \varprojlim_{\gamma \in \Psi^*} \widehat F_\gamma$ known as the Arens-Michael decomposition (see~\cite{Mallios86} Chapter III). As compact (resp. nuclear) operators form an operator ideal, we obtain the following.

\begin{lem}\label{lem_BanachAlg}
Let $F$ be a locally $m$-convex algebra. Then $F$ is Schwartz (resp. nuclear) if and only if every continuous algebra homomorphism $M : F \mapsto A$ is compact (resp. nuclear) for every Banach algebra $A$.
\end{lem}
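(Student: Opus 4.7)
The plan is to use the Arens-Michael decomposition $F \hookrightarrow \varprojlim_{\gamma \in \Lambda} \widehat F_\gamma$ together with the characterization of the Schwartz (resp. nuclear) property of $F$ as the compactness (resp. nuclearity) of the connecting maps $\pi_{\gamma',\gamma} : \widehat F_{\gamma'} \to \widehat F_\gamma$ along a cofinal set of pairs $\gamma' \geq \gamma$ in $\Lambda$. Once translated this way, both directions follow naturally from the operator ideal structure of compact and nuclear maps.

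For the ``only if'' direction, let $M : F \to A$ be any continuous algebra homomorphism into a Banach algebra. By continuity there is $\gamma \in \Lambda$ with $\|Mx\|_A \leq \gamma(x)$, so $M$ factors uniquely as $M = \widetilde M \circ \pi_\gamma$ with a continuous $\widetilde M : \widehat F_\gamma \to A$. Assuming $F$ is Schwartz (resp. nuclear), I would select $\gamma' \in \Lambda$ with $\gamma' \geq \gamma$ such that $\pi_{\gamma',\gamma}$ is compact (resp. nuclear); then
\[
M : F \xrightarrow{\pi_{\gamma'}} \widehat F_{\gamma'} \xrightarrow{\pi_{\gamma',\gamma}} \widehat F_\gamma \xrightarrow{\widetilde M} A
\]
inherits the corresponding property from the operator ideal axioms.

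For the converse, fix $\gamma \in \Lambda$ and apply the hypothesis to the continuous algebra homomorphism $\pi_\gamma : F \to \widehat F_\gamma$. In the compact case, compactness of $\pi_\gamma$ furnishes a $0$-neighborhood $U \subset F$ with relatively compact image in $\widehat F_\gamma$; choosing $U = \{\gamma' \leq 1\}$ for a sub-multiplicative $\gamma' \in \Lambda$ with $\gamma' \geq \gamma$, one sees that the unit ball of $\widehat F_{\gamma'}$ (the closure of the image of $U$ under $\pi_{\gamma'}$) is sent into a relatively compact subset of $\widehat F_\gamma$ by $\pi_{\gamma',\gamma}$, so $\pi_{\gamma',\gamma}$ is compact. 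In the nuclear case, I would write a nuclear representation $\pi_\gamma(x) = \sum_n \lambda_n \phi_n(x) y_n$ with $(\lambda_n) \in \ell^1$, equicontinuous $(\phi_n) \subset F'$ and bounded $(y_n) \subset \widehat F_\gamma$, and then pick $\gamma' \in \Lambda$ with $\gamma' \geq \gamma$ simultaneously dominating every $\phi_n$; each $\phi_n$ descends to a bounded functional on $\widehat F_{\gamma'}$, and the same series then exhibits $\pi_{\gamma',\gamma}$ as nuclear.

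The main obstacle lies in the converse, where one must extract a single sub-multiplicative seminorm $\gamma' \geq \gamma$ in $\Lambda$ that either absorbs the relatively compact neighborhood or simultaneously dominates the whole equicontinuous family $(\phi_n)$. Both reductions rest on the fact that $\Lambda$ is a fundamental family of sub-multiplicative seminorms, which allows enlargement within $\Lambda$ without losing sub-multiplicativity; this same observation validates the continuity of the factorizations appearing throughout the argument.
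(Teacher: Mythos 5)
Your proposal is correct and follows essentially the same route as the paper: the Arens--Michael decomposition into the Banach algebras $\widehat F_\gamma$ together with the operator ideal property of compact (resp. nuclear) maps, with the converse obtained by applying the hypothesis to the canonical homomorphisms $\pi_\gamma : F \to \widehat F_\gamma$. The only difference is that you re-derive compactness/nuclearity of the link maps $\pi_{\gamma',\gamma}$, whereas one may equivalently stop at the canonical maps $\pi_\gamma$ themselves, since their compactness (resp. nuclearity) for a fundamental family of sub-multiplicative semi-norms already characterizes the Schwartz (resp. nuclear) property.
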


For a normed space $V$ and Banach space $W$, denote by $\NN(V,W)$ the Banach space of nuclear operators from $V$ to $W$ with the nuclear norm $\normc_N$.

\begin{lem}\label{lem_nuclearExt}
Let $(V,\gamma)$ be a normed space and $A$ a Banach algebra. Let $M \in \NN(V, A)$ with $\norm{M}_N < 1$. Equip $T(V)$ with the norm $\exp(\gamma)$. Then the extension $M_E : T(V) \mapsto A$ is nuclear and $\norm{M_E}_N \leq (1-\norm{M}_N)^{-1}$.
\end{lem}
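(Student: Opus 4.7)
The plan is to exploit the $\ell^1$-direct-sum structure that $\exp(\gamma)$ gives $T(V)$, together with a level-by-level nuclear representation obtained from one for $M$. Fix $c$ with $\norm{M}_N < c < 1$ and choose a nuclear representation $M = \sum_{n \geq 1} a_n \otimes \phi_n$, where $a_n \in A$, $\phi_n \in V'$, and $\sum_n \norm{a_n}\norm{\phi_n} \leq c$.

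The first step is to identify the component $M_E^{(k)} := M_E\bigr|_{V^{\otimes k}} : V^{\widehat\otimes k} \to A$ with a nuclear operator and bound its nuclear norm. Because $M_E$ is an algebra homomorphism,
\[
M_E^{(k)}(v_1 \otimes \cdots \otimes v_k) = M(v_1)\cdots M(v_k) = \sum_{n_1,\ldots,n_k} \phi_{n_1}(v_1)\cdots\phi_{n_k}(v_k)\, a_{n_1}\cdots a_{n_k}.
\]
The functional $(v_1,\ldots,v_k) \mapsto \phi_{n_1}(v_1)\cdots\phi_{n_k}(v_k)$ defines an element of $(V^{\widehat\otimes k})'$ of norm at most $\norm{\phi_{n_1}}\cdots\norm{\phi_{n_k}}$ by the universal property of the projective tensor norm, while $\norm{a_{n_1}\cdots a_{n_k}} \leq \norm{a_{n_1}}\cdots\norm{a_{n_k}}$. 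Thus
\[
\norm{M_E^{(k)}}_N \leq \sum_{n_1,\ldots,n_k} \prod_{j=1}^k \norm{a_{n_j}}\norm{\phi_{n_j}} = \Bigl(\sum_n \norm{a_n}\norm{\phi_n}\Bigr)^k \leq c^k,
\]
so that $M_E^{(k)}$ lies in $\NN(V^{\widehat\otimes k}, A)$ with norm at most $c^k$.

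Next, since $T(V)$ endowed with $\exp(\gamma)$ is the $\ell^1$-direct sum of the summands $(V^{\otimes k}, \gamma^{\otimes k})$, each projection $\pi_k : T(V) \to V^{\widehat\otimes k}$ has operator norm at most $1$. Consequently $M_E^{(k)} \circ \pi_k \in \NN(T(V), A)$ with $\norm{M_E^{(k)} \circ \pi_k}_N \leq c^k$. Because $\NN(T(V), A)$ is a Banach space under the nuclear norm and $\sum_{k \geq 0} c^k = (1-c)^{-1} < \infty$, the series
\[
\sum_{k \geq 0} M_E^{(k)} \circ \pi_k
\]
converges in nuclear norm; its sum agrees with $M_E$ on the dense subspace $\bigoplus_k V^{\otimes k} = T(V)$ and hence equals $M_E$ on the completion. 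Taking the nuclear-norm bound term by term gives $\norm{M_E}_N \leq (1-c)^{-1}$.

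Finally, letting $c \downarrow \norm{M}_N$ yields the desired bound $\norm{M_E}_N \leq (1-\norm{M}_N)^{-1}$. The only point that requires a little care is the nuclear-norm estimate on each $M_E^{(k)}$: one must verify that the product functionals $\phi_{n_1} \otimes \cdots \otimes \phi_{n_k}$ really do serve as dual elements of $V^{\widehat\otimes k}$ with the claimed norms, which is exactly the universal property defining the projective norm. Everything else is a summation over the geometric series permitted by the $\exp(\gamma)$-topology on $T(V)$.
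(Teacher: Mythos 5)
Your proof is correct and follows essentially the same route as the paper: bound the nuclear norm of each level map $V^{\otimes_\pi k}\to A$ by (a number arbitrarily close to) $\norm{M}_N^k$ and then sum the resulting geometric series in the Banach space $\NN(T(V),A)$. The only difference is that you derive the levelwise estimate directly by expanding a nuclear representation of $M$, whereas the paper cites Holub's theorem on tensor products of nuclear operators together with the unit-norm multiplication map $A^{\otimes_\pi k}\to A$; your computation is a self-contained version of that same estimate.
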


\begin{proof}
It holds that product map $M^{\otimes k} : V^{\otimes_\pi k} \mapsto A^{\otimes_\pi k}$ is nuclear with nuclear norm bounded by $\norm{M}_N^k$ (\cite{Holub70} Theorem~3.7 - the bound is clear from the proof therein), and the multiplication map $A^{\otimes_\pi k} \mapsto A$ has unit operator norm. It follows that $M^{\otimes k} : V^{\otimes_\pi k} \mapsto A$ is nuclear with nuclear norm at most $\norm{M}_N^k$ (\cite{Grothendieck55} p.84). The conclusion follows since $M_E = \sum_{k \geq 0} M^{\otimes k}$ is an absolutely convergent series in $\NN(T(V), A)$.
\end{proof}

For a semi-norm $\gamma$ on $V$, let $B_\gamma = \{v \in V \mid \gamma(v) < 1\}$, and for a subset $B \subseteq V$, let $\Gamma(B)$ be the absolutely convex hull of $B$.

\begin{proof}[Proof of Proposition~\ref{prop_SchwartzNuclear}]
The ``only if'' direction is clear. Let $A$ be a Banach algebra, $M \in \LLL(V,A)$, and let $\Psi$ be a fundamental family of semi-norms on $V$.
For a semi-norm $\gamma$ on $V$, recall that $B_{\gamma^{\otimes k}} = \Gamma(B_\gamma^{\otimes k}) \subset V^{\otimes k}$.

Suppose $V$ is Schwartz. Take $\gamma \in \Psi^*$ such that $M(B_\gamma) \subset A$ is relatively compact and $\gamma(M) < 1$. It follows that $M(B_{\gamma^{\otimes k}})$ is relatively compact in $A$ (\cite{Treves67} Proposition~7.11). Since the unit ball $B_{\exp(\gamma)}$ is given by $\Gamma(\bigcup_{k \geq 0} B_{\gamma^{\otimes k}})$, we obtain that $M(B_{\exp(\gamma)})$ is totally bounded in $A$. Thus $E$ is Schwartz by Lemma~\ref{lem_BanachAlg}.

Suppose $V$ is moreover nuclear. Take $\gamma \in \Psi^*$ such that the induced map $M_\gamma : V_\gamma \mapsto A$ is nuclear with $\norm{M_\gamma}_N < 1$. As $(V^{\otimes_\pi k})_{\gamma^{\otimes k}}$ and $(V_\gamma)^{\otimes_\pi k}$ are isometrically isomorphic (\cite{Grothendieck55} p.38), we have the natural identification $T(V)_{\exp(\gamma)}  \cong (T(V_\gamma), \exp(\gamma))$. It follows that $M_E : T(V)_{\exp(\gamma)} \mapsto A$ is nuclear by Lemma~\ref{lem_nuclearExt}. Thus $E$ is nuclear again by Lemma~\ref{lem_BanachAlg}.
\end{proof}

One may also ask when the extension map $\cdot_E : \LLL(V,A) \mapsto \Hom(E,A)$ is continuous under certain topologies. In the case of the strong topology when $V$ is normed, we obtain a homeomorphism by the following proposition. First, remark that if $\norm{x_j}\leq c$ and $\norm{x_j - y_j} \leq \varepsilon$ for $x_1,\ldots,x_n,y_1,\ldots,y_n \in A$, where $A$ is a normed algebra, then
\begin{equation}\label{eq_prodBound}
\norm{x_1\ldots x_n - y_1\ldots y_n}
\leq \sum_{j = 1}^n \binom{n}{j}\varepsilon^j c^{n-j}
= (c+\varepsilon)^n - c^n.
\end{equation}

\begin{proposition} \label{prop_contWithStrong}
Let $V$ be a normed space and $A$ a Banach algebra. The extension map $\cdot_E : M \mapsto M_E$ from $\LLL(V,A)$ to $\Hom(E,A)$ is continuous (and thus a homeomorphism) when one equips both sides with the strong topology.
\end{proposition}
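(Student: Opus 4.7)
The plan is to first interpret the two strong topologies concretely. Since $V$ is normed, the strong topology on $\LLL(V,A)$ (uniform convergence on bounded subsets of $V$) coincides with the operator norm topology. The strong topology on $\Hom(E,A)$ is uniform convergence on bounded subsets $B \subset E$, and by Corollary~\ref{cor_radiusOfConv} applied to the fundamental family $(n\norm{\cdot})_{n \geq 1}$, such a $B$ is bounded in $E$ if and only if $K_n := \sup_{x \in B} \sum_{k \geq 0} n^k \norm{x^k}$ is finite for every $n \geq 1$. The task thus reduces to: given $\varepsilon > 0$ and $B \subset E$ bounded, produce $\delta > 0$ such that $\norm{M' - M} \leq \delta$ implies $\norm{M'_E(x) - M_E(x)} \leq \varepsilon$ uniformly in $x \in B$.

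Next I would establish a tensor-level estimate. Set $c = \norm{M}+1$, so that any $M'$ with $\norm{M' - M} \leq \delta \leq 1$ satisfies $\norm{M'} \leq c$. Applying~\eqref{eq_prodBound} with $x_j = M(v_j)$ and $y_j = M'(v_j)$ for $\norm{v_j} \leq 1$, the bounds $\norm{M(v_j)} \leq c$ and $\norm{(M-M')(v_j)} \leq \delta$ give
\[
\norm{M_E(v_1 \otimes \cdots \otimes v_k) - M'_E(v_1 \otimes \cdots \otimes v_k)} \leq (c+\delta)^k - c^k.
\]
Writing an arbitrary $y \in V^{\widehat\otimes k}$ as an absolutely convergent sum of elementary tensors whose weights are arbitrarily close to the projective norm $\norm{y}$, linearity and continuity of the extensions then upgrade this to the operator-type bound $\norm{M_E(y) - M'_E(y)} \leq ((c+\delta)^k - c^k)\norm{y}$ on all of $V^{\widehat\otimes k}$.

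Summing over $k$ produces $\norm{M_E(x) - M'_E(x)} \leq \sum_{k \geq 0} ((c+\delta)^k - c^k) \norm{x^k}$, which I would make uniformly small on $B$ by a two-scale split. For the tail, restricting to $\delta \leq c$ gives $(c+\delta)^k - c^k \leq (2c)^k = 2^{-k}(4c)^k$, whence
\[
\sum_{k > N}((c+\delta)^k - c^k)\norm{x^k} \leq 2^{-(N+1)} K_{4c}
\]
for all $x \in B$; this vanishes as $N \to \infty$, uniformly in $x \in B$ and $\delta \in [0,c]$. For the head, since $(c+\delta)^k - c^k$ is increasing in $k$, one has $\sum_{k \leq N}((c+\delta)^k - c^k)\norm{x^k} \leq ((c+\delta)^N - c^N) K_1 \to 0$ uniformly on $B$ as $\delta \to 0$. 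Choosing first $N$ large and then $\delta$ small finishes the proof. The step I expect to be most delicate is this uniform tail estimate on $B$; the trick is to invoke a larger index ($4c$) from the fundamental family than what pointwise summability alone would require, so as to extract a geometric factor $2^{-k}$ that makes the tail uniformly small on $B$.
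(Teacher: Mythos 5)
Your proof is correct and follows essentially the same route as the paper's: the estimate~\eqref{eq_prodBound} on elementary tensors, extension to $V^{\widehat\otimes k}$ via the projective norm, and a head/tail split in $k$ with the tail controlled uniformly on the bounded set $B$ by invoking a larger member of the fundamental family $\exp(n\normc)$ (your explicit $2^{-k}$ factor from $K_{4c}$ is just a quantitative version of the paper's contradiction argument for the decay of $\sup_{x\in B}\gamma^{\otimes k}(x^k)$). The only cosmetic quibble is that the characterization of bounded sets of $E$ should be credited to Proposition~\ref{prop_topEqual} rather than Corollary~\ref{cor_radiusOfConv}, and the passage from elementary to general tensors is cleanest stated as the operator-norm bound on $V^{\otimes k}$ followed by density in $V^{\widehat\otimes k}$.
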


\begin{proof}
Let $(M_j)_{j \geq 1} \rightarrow M$ in $\LLL(V,A)$. Let $\gamma$ be a norm on $V$ such that $\gamma(M)\leq 1$ and $\gamma(M_j) \leq 1$ for all $j \geq 1$.

Remark that for any bounded set $B \subset E$ and $\varepsilon > 0$, there exists $k_\varepsilon \geq 1$ such that $\sup_{x \in B} \gamma^{\otimes k}(x^k) \leq \varepsilon^k$ for all $k \geq k_\varepsilon$ (if not, then take a sequence $x_n \in B$ such that $\gamma^{\otimes n}(x^n_n) > \varepsilon^n$. Then $\exp(c\gamma)(x_n) > c^n\varepsilon^n$ for any $c > 1$ and $n \geq 1$, which is implies that $\exp(c\gamma)$ is not bounded on $B$ for some $c > 1$ which is a contradiction).

Remark that every bounded set in $V^{\otimes k}$ is contained in $\overline\Gamma(B_1\otimes\ldots \otimes B_k)$ for bounded sets $B_1,\ldots,B_k \subset V$, and that the supremum of a convex function on a set is equal to its supremum on the set's convex hull. Together with~\eqref{eq_prodBound}, this implies that for any fixed $n$,
\[
\sup_{x \in B} \sum_{0 \leq k \leq n} \norm{M^{\otimes k}_i(x^k) - M^{\otimes k}(x^k)} \rightarrow 0.
\]
Hence
\begin{equation*}
\begin{split}
\sup_{x \in B} \norm{M_j(x) - M(x)}
\leq & \sup_{x \in B} \sum_{0 \leq k \leq n} \norm{M^{\otimes k}_i(x^k) - M^{\otimes k}(x^k)} \\
&+ 2\sup_{x \in B} \sum_{k > n} \gamma^{\otimes k}{x^k}\
\end{split}
\end{equation*}
can be made arbitrarily small with sufficiently large $n$ and $j$.
\end{proof}

\begin{remark}
If we assume simply that $V$ is locally convex and $M \in \LLL(V,A)$, applying the above proposition to the semi-norm $\gamma(x) = \norm{M(x)}$ on $V$ implies in particular that the map $\lambda \mapsto (\lambda M)_E$ is continuous from $\C$ to $\Hom(E,A)$, where the latter is equipped with the strong topology.
\end{remark}

\section{Group-like elements}\label{sec_groupLike}

We recall that $T(V)$ is a Hopf algebra with coproduct $\Delta v = 1\otimes v + v\otimes 1$ for all $v \in V$
and antipode $\alpha(v_1\ldots v_k) = (-1)^k v_k \ldots v_1$ for all $v_1\ldots v_k \in V^{\otimes k}$ (\cite{FreeLieAlgebras} Proposition~1.10).

Consider now $V$ a locally convex space. Since $E^{\widehat \otimes 2}$ is itself a locally $m$-convex algebra (\cite{Mallios86} p.378), and since $\Delta \in \LLL(V, E^{\otimes_\pi 2})$, the extension $\Delta : E \mapsto E^{\widehat\otimes 2}$ is continuous by the universal property of $E$. Moreover the antipode $\alpha$ extends to a continuous linear map $\alpha : E \mapsto E$. This endows $E$ with an ``almost'' Hopf algebra structure (``almost'' since $E$ is not mapped to $E^{\otimes 2}$ under the coproduct $\Delta$ as for Hopf algebras, but to its completion $E^{\widehat\otimes 2}$).

Denote by $U(V) = \{g \in E \mid \alpha(g) = g^{-1}\}$ and  $G(V) = \{g \in E \mid \Delta(g) = g\otimes g, g \neq 0\}$ the groups of \emph{unitary} elements and \emph{group-like} elements of $E$ respectively. Note that since multiplication and inversion in $E$ are continuous (and indeed in every locally $m$-convex algebra,~\cite{Mallios86} p.5, p.52), $U$ and $G$ are topological groups when endowed with the subspace topology. Moreover, $U$ is closed in $E$ since the map $\phi: x \mapsto (\alpha(x)x, x\alpha(x))$ from $E$ into $E\times E$ is continuous and $U = \phi^{-1}\{(1,1)\}$. Likewise $G$ is closed in $E$ since $g^0 = 1$ for all $g \in G$ and $G = \psi^{-1}\{0\} \setminus\{0\}$ for the continuous map $\psi : x \mapsto x\otimes x - \Delta(x)$ from $E$ into $E^{\widehat \otimes 2}$. Finally, note the inclusion $G \subset U$.

In this section we collect several results concerning measures on $G$. While these results shall later be applied mostly to the case $V = \R^d$, we find making this assumption does not simplify the proofs, and thus make most statements in full generality.

All measures (resp. random variables) are assumed to be Borel. Denote by $\PP(S)$ the space of probability measures on a topological space $S$ endowed with the topology of weak convergence on $C_b(S, \C)$.

Recall that for a locally convex space $F$, an $F$-valued random variable $X$ is weakly (Gelfand-Pettis) integrable, or that $\EEE{X}$ exists, if $f(X)$ is integrable for all $f \in F'$ and if there exists $\EEE{X} := x \in F$ such that $\EEE{f(X)} = f(x)$. Letting $\mu$ be the probability measure associated with $X$, we denote by $\mu^* = \EEE{X}$ its barycenter. Unless stated otherwise, we shall always assume that $\mu$ is the measure associated to $X$ and that integrals are taken in the weak sense.

\begin{definition}\label{def_expSig}
For an $E$-valued random variable $X$, we call the sequence
\[
\ExpSig(X) := (\EEE{X^0}, \EEE{X^1}, \ldots) \in P = \prod_{k \geq 0} V^{\widehat\otimes k}
\]
the \emph{expected signature} of $X$ whenever $X^k$ is integrable for all $k \geq 0$.

When $V$ is normed, define $r_1(X)$ as the radius of convergence of the series
\[
\sum_{k \geq 0}\EEE{\norm{X^k}}\lambda^k
\]
(setting $r_1(X) = 0$ whenever $X^k$ is not norm-integrable for some $k \geq 0$), and $r_2(X)$ as the radius of convergence of the series
\[
\sum_{k \geq 0}\norm{\EEE{X^k}}\lambda^k,
\]
(setting $r_2(X) = 0$ whenever $X^k$ is not integrable for some $k \geq 0$).
\end{definition}

Note that $r_2(X) = R(\ExpSig(X))$. Remark also that $r_1(X) \leq r_2(X)$ and that Proposition~\ref{prop_radiiEquiv} provides a partial converse when $V = \R^d$ and $X$ is $G(\R^d)$-valued.

Note that $\ExpSig(X)$ exists whenever $X$ is integrable as an $E$-valued random variable. The following proposition now provides a converse when $X$ is $G$-valued. Recall that we identify $E$ as a subspace of $P$ (Corollary~\ref{cor_radiusOfConv}).

\begin{proposition}\label{prop_expSig}
Let $X$ be a $G$-valued random variable. Then $X$ is weakly integrable if and only if $\ExpSig(X)$ exists and lies in $E$. In this case $\EEE{X} = \ExpSig(X)$.
\end{proposition}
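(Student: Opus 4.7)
The plan is to prove both directions separately. The forward direction follows routinely from the continuity of the coordinate projections, while the reverse requires a dominated convergence argument that exploits the group-like structure of $X$.

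For the ``only if'' direction, suppose $X$ is weakly integrable with barycenter $y = \EEE{X} \in E$. For each $k \geq 0$, the projection $\rho^k : E \to V^{\widehat\otimes k}$ is continuous linear, so for any $\ell \in (V^{\widehat\otimes k})'$ the composition $\ell \circ \rho^k$ lies in $E'$. Applying the defining property of the weak integral to this functional yields $\EEE{\ell(X^k)} = \ell(y^k)$, whence each $X^k$ is weakly integrable with $\EEE{X^k} = y^k$. Therefore $\ExpSig(X) = y \in E$.

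For the converse, assume $y := \ExpSig(X)$ exists and lies in $E$, and fix an arbitrary $f \in E'$. By construction of the topology on $E$ there exist a fundamental semi-norm $\gamma$ on $V$ and a constant $C > 0$ such that $|f(x)| \leq C \exp(\gamma)(x)$ for all $x \in E$. Testing this bound against vectors supported at a single level shows that $f$ decomposes as $f = \sum_k f_k$ with $f_k \in (V^{\widehat\otimes k})'$ and $|f_k(z)| \leq C \gamma^{\otimes k}(z)$ uniformly in $k$; convergence of $f(x) = \sum_k f_k(x^k)$ at each $x \in E$ follows from Corollary~\ref{cor_truncsConv} applied to the singleton $\{x\}$.

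The natural strategy is now to approximate via the truncations $T^{(n)}$. For each $n$ the finite sum $f(T^{(n)} X) = \sum_{k=0}^n f_k(X^k)$ is integrable since each $X^k$ is weakly integrable by hypothesis, and linearity gives
\[
\EEE{f(T^{(n)} X)} = \sum_{k=0}^n f_k(\EEE{X^k}) = f(T^{(n)} y).
\]
Since $T^{(n)} y \to y$ in $E$, the right-hand side converges to $f(y)$. It remains to show $\EEE{f(T^{(n)} X)} \to \EEE{f(X)}$, which would follow by dominated convergence from the envelope $|f(T^{(n)} X)| \leq C \exp(\gamma)(X)$ provided $\EEE{\exp(\gamma)(X)} < \infty$ for every fundamental $\gamma$, i.e.\ that $r_1(X) = \infty$.

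This last step is the main obstacle, and it is where the group-like hypothesis must enter. The assumption $\ExpSig(X) \in E$ is the statement $r_2(X) = \infty$, which a priori only controls the norms of the expectations, not the expectations of the norms. Upgrading $r_2(X) = \infty$ to $r_1(X) = \infty$ should be possible by exploiting the shuffle identity $\Delta X = X \otimes X$: this identity expresses symmetrized parts of each $X^k$ polynomially in the lower components $X^1, \ldots, X^{k-1}$ and thereby permits Jensen-type bounds on $\EEE{\gamma^{\otimes k}(X^k)}$ in terms of $\gamma^{\otimes j}(\EEE{X^j})$ for $j \leq k$. Over $V = \R^d$ this control is made fully quantitative by the later Proposition~\ref{prop_radiiEquiv}. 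Granting this, monotone convergence gives $\EEE{\exp(\gamma)(X)} = \sum_k \EEE{\gamma^{\otimes k}(X^k)} < \infty$, dominated convergence yields $\EEE{f(X)} = f(y)$, and since $f \in E'$ was arbitrary one concludes that $X$ is weakly integrable with $\EEE{X} = \ExpSig(X)$.
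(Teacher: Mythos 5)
Your forward direction and your decomposition $f=\sum_k f^k$ via the projections and Corollary~\ref{cor_truncsConv} match the paper's setup, and you correctly locate the crux: some use of group-likeness is needed to pass from control of $\EEE{X^k}$ to integrability of $f(X)$. But at exactly that point the argument stops being a proof. You propose to dominate by $C\exp(\gamma)(X)$ and therefore need $\EEE{\exp(\gamma)(X)}<\infty$, i.e.\ the upgrade from $r_2(X)=\infty$ to $r_1(X)=\infty$; this you only assert ``should be possible'' via the shuffle/coproduct identity and by citing Proposition~\ref{prop_radiiEquiv}. That proposition is proved only for $V=\R^d$ (its derivation hinges on expanding $\norm{X^k}$ in the finite $\ell^1$ basis so that the expectation can be pushed inside and converted into a linear functional of $\EEE{X^{2k}}$), and the paper explicitly calls it a \emph{partial} converse valid when $V=\R^d$. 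The present proposition, however, is stated for $G(V)$ with $V$ a general locally convex space, where no such bound of $\EEE{\norm{X^k}}$ by norms of expectations is available; so your dominating function may simply fail to be integrable, and the strategy cannot be completed in the stated generality. Your heuristic for the mechanism is also off: the identity $\Delta g=g\otimes g$ is used not to express ``symmetrized parts of $X^k$ in terms of lower components'' but to relate the \emph{square} of a level-$k$ functional to a level-$2k$ functional, $f(g)^2=f^{\otimes 2}(\Delta g)$.

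The paper's proof avoids $r_1$ entirely. For each $k$ it applies Cauchy--Schwarz to the scalar random variable $f^k(X)$ and then the group-like identity:
\[
\mu(|f^k|)\;\leq\;\sqrt{\mu\bigl((f^k)^{2}\bigr)}\;=\;\sqrt{(f^k)^{\otimes 2}\,\Delta\,\EEE{X^{2k}}},
\]
using that $(f^k)^{\otimes 2}\circ\Delta$ lives at level $2k$ (gradedness of $\Delta$) and that $X^{2k}$ is weakly integrable by hypothesis. The right-hand side involves only $\ExpSig(X)$, and since $\ExpSig(X)\in E$ these square roots are summable (Corollary~\ref{cor_radiusOfConv}), so $\sum_k|f^k(X)|$ is integrable and dominated convergence gives $\mu(f)=\gen{f,\ExpSig(X)}$. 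If you want to salvage your route, you would either have to prove the $r_2\Rightarrow r_1$ upgrade in the generality of the statement (which is not established in the paper and is doubtful in infinite dimensions), or replace your envelope $C\exp(\gamma)(X)$ by the integrable majorant $\sum_k|f^k(X)|$ obtained as above — which is precisely the paper's argument.
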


In the case that $V$ is normed, note that in order to conclude that a $G$-valued random variable $X$ is (weakly) integrable (as an $E$-valued random variable), Proposition~\ref{prop_expSig} implies that one only needs to check that each projection $X^k$ is (weakly) integrable and that $\norm{\EEE{X^k}}$ decays sufficiently fast as $k \rightarrow \infty$. Remark that this is certainly not true for an arbitrary $E$-valued random variable.

We observe that for any $f \in E'$, it holds that $f^{\otimes 2} \circ \Delta \in E'$ and $f(g)^2 = f^{\otimes 2} (\Delta g)$ for all $f \in E$ and $g \in G$. In particular, for all $\mu \in \PP(G)$, we have
\begin{equation}\label{eq_squareBound}
\mu(|f|) \leq \sqrt{\mu(f^2)} = \sqrt{\mu(f^{\otimes 2}\circ \Delta)}.
\end{equation}
This simple observation allows for very easy control of a measure through its barycenter. For example, whenever $\mu \in \PP(G)$ and $\EEE{X}$ exists, it follows immediately that for all $f \in E'$, the real random variable $f(X)$ has finite moments of all orders.

The main idea behind the proof of Proposition~\ref{prop_expSig} is that given the existence of $\EEE{X^k}$ for all $k \geq 0$, we wish to approximate $\EEE{f(X)}$ by $\sum_{k=0}^n \EEE{f(X^k)}$. Using the estimate~\eqref{eq_squareBound} and the grading of the coproduct $\Delta$, we apply dominated convergence to obtain $\EEE{f(X)} = \sum_{k\geq 0} \EEE{f(X^k)}$.

\begin{proof}[Proof of Proposition~\ref{prop_expSig}]
The ``only if'' direction is clear. Assume that $\ExpSig(X)$ exists and $\ExpSig(X) \in E$. As usual, let $\mu$ be the measure on $G$ associated to $X$. We are required to show that $f$ is $\mu$-integrable and that $\mu(f) = \gen{f, \ExpSig(X)}$ for all $f \in E'$.

We recall the projection $\rho^k : E \mapsto V^{\widehat\otimes k}$ and canonically embed $(V^{\widehat\otimes k})'$ into $E'$ by $f \mapsto f \rho^k =: f^k$ for all $f \in (V^{\widehat\otimes k})'$. By Corollary~\ref{cor_truncsConv}, $\sum_{k = 0}^n f^k$ converges uniformly on bounded sets (and a fortiori pointwise) to $f$.

Remark that for any $f \in E'$, $f \in (V^{\widehat\otimes k})'$ if and only if $f = f^k$. Recall that $\Delta$ is a graded linear map from $T(V)$ to $T(V)^{\otimes 2}$. In particular, for all $f_1 \in (V^{\widehat\otimes k})'$, $f_2 \in (V^{\widehat\otimes m})'$ and $x \in T(V)$, it holds that
\begin{equation}\label{eq_truncs}
(f_1\otimes f_2)\Delta(x) = (f_1\otimes f_2)\Delta (x^{k+m}).
\end{equation}
As $T(V)$ is dense in $E$,~\eqref{eq_truncs} holds for all $x \in E$, from which it follows that $(f_1 \otimes f_2)\circ \Delta \in (V^{\widehat\otimes (k+m)})'$.

Let $f \in E'$ and note that $\mu(f^k) = \gen{f^k, \EEE{X^k}}$ for all $k\geq 0$.
Since $\mu$ has support on $G$, it follows from~\eqref{eq_squareBound} and~\eqref{eq_truncs} that
\begin{equation}\label{eq_sumFinite}
\mu\left(\sum_{k\geq 0}|f^k|\right)
\leq \sum_{k\geq 0} \sqrt{\mu((f^k)^{\otimes 2} \circ \Delta)}
= \sum_{k\geq 0} \sqrt{(f^k)^{\otimes 2} \Delta \EEE{X^{2k}}}.
\end{equation}

Without loss of generality, we can assume that $|f(1)| \leq 1$. Let $\gamma$ be a semi-norm on $V$ such that $\exp(\gamma) \geq |f|$ and $\xi$ a semi-norm on $E$ such that $\xi \geq \exp(\gamma)^{\otimes 2}\circ\Delta$. It follows that $\exp(\gamma) \geq |f^k|$ for all $k \geq 0$, and thus $\xi \geq |(f^k)^{\otimes 2}\circ\Delta|$ for all $k \geq 0$.

Since $\ExpSig(X) \in E$, it follows from Corollary~\ref{cor_radiusOfConv} that $\sum_{k \geq 0} \sqrt{\xi(\EEE{X^k})}$ is finite, and hence~\eqref{eq_sumFinite} is finite. By dominated convergence, we obtain
\[
\mu(f) = \lim_{n \rightarrow\infty} \mu(\sum_{k=0}^n f^k).
\]
It then follows that $\mu(f) = \gen{f, \ExpSig(X)}$ as desired since
\[
\mu(\sum_{k=0}^n f^k) = \sum_{k=0}^n \gen{f^k, \EEE{X^k}} \rightarrow \gen{f,\ExpSig(X)}.
\]
\end{proof}

\begin{cor}\label{cor_expExists}
Let $V$ be a normed space and $X$ a $G$-valued random variable. Then $\EEE{X} \in E$ exists if and only if $r_2(X) = \infty$, i.e., $\ExpSig(X)$ exists and has an infinite radius of convergence. In this case $\EEE{X} = \ExpSig(X)$.
\end{cor}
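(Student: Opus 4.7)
The plan is to deduce this corollary directly from Proposition~\ref{prop_expSig} together with the description of $E$ as a subspace of $P$ provided by Corollary~\ref{cor_radiusOfConv}. Proposition~\ref{prop_expSig} already equates weak integrability of the $G$-valued random variable $X$ with the condition that $\ExpSig(X)$ exists and lies in $E$, and under this equivalence gives $\EEE{X}=\ExpSig(X)$. So the only thing left to do is rewrite ``$\ExpSig(X)\in E$'' as ``$r_2(X)=\infty$'' in the normed setting.

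For this step I would recall the observation made just before Proposition~\ref{prop_SchwartzNuclear}: when $V$ is a normed space with norm $\normc$, the family $(n\normc)_{n\geq 1}$ is a fundamental family of semi-norms on $V$. Applying Corollary~\ref{cor_radiusOfConv} to this fundamental family, an element $x\in P$ lies in $E$ if and only if $\sum_{k\geq 0} n^k \norm{x^k} <\infty$ for every integer $n\geq 1$, which is precisely to say that the power series $\sum_{k\geq 0}\norm{x^k}\lambda^k$ has infinite radius of convergence. Setting $x=\ExpSig(X)$, this translates into $r_2(X)=\infty$.

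Putting the two pieces together yields the ``if and only if''. For the direction $r_2(X)=\infty\Rightarrow \EEE{X}$ exists, note that the convention in Definition~\ref{def_expSig} sets $r_2(X)=0$ whenever some $X^k$ fails to be (weakly) integrable, so $r_2(X)=\infty$ already forces each $\EEE{X^k}$ to exist and hence $\ExpSig(X)$ to be well defined as an element of $P$; the radius condition then places it in $E$, and Proposition~\ref{prop_expSig} concludes with $\EEE{X}=\ExpSig(X)$. The converse is immediate, since if $\EEE{X}\in E$ exists then $\ExpSig(X)=\EEE{X}\in E$, giving $r_2(X)=\infty$. There is no substantive obstacle: the corollary is essentially a normed-space repackaging of the two earlier results.
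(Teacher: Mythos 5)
Your proposal is correct and is precisely the argument the paper intends: the corollary is an immediate combination of Proposition~\ref{prop_expSig} with the remark (following Corollary~\ref{cor_radiusOfConv}, using the fundamental family $(n\normc)_{n\geq 1}$) that for normed $V$ an element $x\in P$ lies in $E$ if and only if $R(x)=\infty$. Your handling of the convention $r_2(X)=0$ when some $X^k$ fails to be integrable is also consistent with Definition~\ref{def_expSig}, so nothing is missing.
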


We are moreover able to show explicit bounds between $r_1(X)$ and $r_2(X)$ when $V = \R^d$. Suppose first that $V$ is a normed space. Remark that $\norm{\Delta v} = 2\norm{v}$ for all $v \in V$, from which it follows that $\norm{\Delta\mid_{V^{\widehat\otimes k}}} = 2^k$ and thus
\begin{equation}\label{eq_coprodIneq}
\norm{\Delta x^k} \leq 2^k\norm{x^k} \text{ for all } x \in E.
\end{equation}
Let $V = \R^d$ equipped with the $\ell^1$ norm from its standard basis $e_1,\ldots, e_d$, and denote $e_I = e_{i(1)}\ldots e_{i(k)} \in V^{\otimes k}$ for a word $I=i(1)\ldots i(k)$ in the alphabet $\{1,\ldots, d\}$. Then the grading of $\Delta$ gives
\begin{equation*}
\begin{split}
\EEE{\norm{X^k}^2}
= \EEE{\left(\sum_{|I|=k} |\gen{e_I, X^k}|\right)^2}
&\leq d^k \EEE{\sum_{|I| = k}\gen{e_I, X^k}^2} \\
& = d^k \sum_{|I| = k} e_I^{\otimes 2} \Delta \EEE{X^{2k}} \\
& \leq d^k \norm{\Delta\EEE{X^{2k}}},
\end{split}
\end{equation*}
where the last inequality follows since $(e_I \otimes e_J)_{|I|=|J|=k}$ is an $\ell^1$ basis for $V^{\otimes 2k}$. Using~\eqref{eq_coprodIneq} we now obtain the following.

\begin{proposition}\label{prop_radiiEquiv}
Let $X$ be a $G(\R^d)$-valued random variable. It follows that $\EEE{\norm{X^k}^2} \leq d^k 2^{2k} \norm{\EEE{X^{2k}}}$. In particular, $r_1(X) \leq r_2(X) \leq 2\sqrt{d} r_1(X)$.
\end{proposition}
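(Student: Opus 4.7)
The plan is to combine the chain of estimates laid out in the paragraph preceding the statement with Jensen's inequality and the Cauchy--Hadamard formula. The displayed computation just above the proposition already establishes
\[
\EEE{\norm{X^k}^2} \leq d^k \norm{\Delta \EEE{X^{2k}}},
\]
using $\ell^1$-Cauchy--Schwarz, the group-like identity $\Delta X = X\otimes X$, the grading of $\Delta$ on $T(V)$, and the fact that $(e_I \otimes e_J)_{|I|=|J|=k}$ is an $\ell^1$ basis for $V^{\otimes 2k}$. The first claim then follows immediately by applying~\eqref{eq_coprodIneq} to the right-hand side, which contributes the factor $2^{2k}$.

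For the radii comparison, the inequality $r_1(X) \leq r_2(X)$ is immediate from $\norm{\EEE{X^k}} \leq \EEE{\norm{X^k}}$, since the series defining $r_2$ is then dominated term by term by that defining $r_1$. For the reverse bound $r_2(X) \leq 2\sqrt{d}\, r_1(X)$, I would apply Jensen in the form $\EEE{\norm{X^k}}^2 \leq \EEE{\norm{X^k}^2}$ and combine with the main estimate just established to obtain
\[
\EEE{\norm{X^k}}^{1/k} \leq 2\sqrt{d}\, \norm{\EEE{X^{2k}}}^{1/(2k)}.
\]
Taking $\limsup_{k\to\infty}$ on both sides and invoking Cauchy--Hadamard then yields $1/r_1(X) \leq 2\sqrt{d}/r_2(X)$, using that the $\limsup$ of $\norm{\EEE{X^m}}^{1/m}$ along the even subsequence $m = 2k$ is bounded above by the full $\limsup$.

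The main obstacle is minimal: essentially all of the nontrivial computation has already been exhibited, and the argument for the radii reduces to a short Jensen--Cauchy--Hadamard sandwich. The only point requiring care is the factor-of-two shift in the indices coming from the coproduct (which maps information from $V^{\otimes 2k}$ into $V^{\otimes k} \otimes V^{\otimes k}$); this is the source of the square root in the bound on $\EEE{\norm{X^k}}$ and hence of the $2\sqrt{d}$ constant in the comparison of $r_1$ and $r_2$.
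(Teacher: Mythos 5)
Your proposal is correct and follows the paper's own (largely implicit) proof: the displayed computation before the statement plus the coproduct bound~\eqref{eq_coprodIneq} give the moment inequality, and the radii comparison is exactly the intended Jensen/Cauchy--Hadamard argument, with the even-subsequence $\limsup$ correctly dominated by the full one.
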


\section{Representations}\label{sec_reps}

Recall that for any Hopf algebra, one may define the tensor product and dual of representations via the coproduct and antipode by $M_1\otimes M_2 (x) := (M_1\otimes M_2) \Delta(x)$ and $M^*(x) := M(\alpha(x))^*$. By virtue of continuity of $\Delta$ and $\alpha$, we observe that the family of continuous representations of $E$ over finite dimensional Hilbert spaces is closed under tensor products and duals.

\begin{definition}
Denote by $\A(V)$ the family of finite dimensional representations of $E$ which arise from extensions of all linear maps $M \in \LLL(V,\uu(H_M))$, where $H_M$ ranges over all finite dimensional Hilbert spaces and $\uu(H_M)$ denotes the Lie algebra of the anti-Hermitian operators on $H_M$. Denote by $\CC(V)$ the set of corresponding matrix coefficients, i.e., the set of linear functionals $M_{u,v} \in \LLL(E,\C)$, $M_{u,v}(x) = \gen{M(x)u,v}$ for all $M \in \A$ and $u,v \in H_M$
\end{definition}

The family $\A$ possesses the desirable property that it is closed under taking tensor products and duals of representations. Moreover, we see that $\A$ contains exactly those finite dimensional representations of $E$ which preserve involution, i.e., $M(\alpha x) = M(x)^*$ for all $x \in E$. It follows that every $M \in \A$ is a unitary representation of the group $U$, and thus of $G$.

Observe that the tensor product $M_1 \otimes M_2$ (of any representations $M_1, M_2$ of $E$) coincides on $G$ with the usual group-theoretic tensor product of representations. Moreover, the dual representation $M^*$ of $M \in \A$ can be identified on $U$ with the conjugate representation of $M$ on $U$. It follows that $\CC\mid_G$ forms a $*$-subalgebra of $C_b(G, \C)$.

Let $S$ be a topological space and $F$ a separating $*$-subalgebra of $C_b(S,\C)$. Recall that for tight Borel measures $\mu$ and $\nu$ on $S$, it follows from the Stone-Weierstrass theorem that $\mu = \nu$ if and only if $\mu(f) = \nu(f)$ for all $f \in F$ (\cite{Bogachev07} Exercise~7.14.79). We now obtain the following from the above discussion.

\begin{lem}\label{lem_compactOpen}
Assume that $\A$ separates the points of $G$. Then for tight Borel measures $\mu, \nu$ on $G$, $\mu = \nu$ if and only if $\mu(f) = \nu(f)$ for all $f \in \CC$, or equivalently, $\mu(M) = \nu(M)$ for all $M \in \A$.
\end{lem}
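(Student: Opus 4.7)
The forward direction is immediate since $\CC \subset C_b(G,\C)$, so the content lies in the converse. My plan is to verify that $\CC\mid_G$ is a separating $*$-subalgebra of $C_b(G, \C)$ containing the constant functions, and then to invoke the Stone--Weierstrass--type criterion for tight Borel measures cited above from Bogachev.

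The $*$-subalgebra property has already been recorded in the discussion preceding the statement. The constants are obtained from the one-dimensional trivial representation: take $H = \C$ and the zero map $0 \in \LLL(V, \uu(\C))$, whose extension to $E$ sends each $x$ to its scalar part $x^0$; since group-like elements $x \in G$ satisfy $x^0 = 1$ (applying the counit to $\Delta x = x\otimes x$ together with $x \neq 0$), the matrix coefficient $0_{1,1}$ is identically $1$ on $G$. For separation, if $g \neq h$ in $G$, the assumption on $\A$ yields $M \in \A$ with $M(g) \neq M(h)$, and then some matrix coefficient $M_{u,v}$ satisfies $M_{u,v}(g) \neq M_{u,v}(h)$. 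Finally, each $M_{u,v}$ is bounded on $G$: since $M$ preserves involution and $G \subset U$, we have $M(g) \in U(H_M)$, so $|M_{u,v}(g)| \leq \norm{u}\norm{v}$. Continuity of each $M_{u,v}$ on $E$ (and hence on $G$) follows from continuity of $M$ itself. Applying Bogachev's criterion then gives $\mu = \nu$ whenever $\mu(f) = \nu(f)$ for all $f \in \CC$.

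It remains to explain the ``equivalently''. For $M \in \A$ with $M(G) \subset U(H_M)$, the weak integral $\mu(M) := \int_G M(g)\, d\mu(g) \in \End(H_M)$ is well-defined because the matrix coefficients are bounded continuous, and its $(u,v)$-entry is precisely $\mu(M_{u,v})$. Hence $\mu(M) = \nu(M)$ for all $M \in \A$ is the same as $\mu(f) = \nu(f)$ for all $f \in \CC$, completing the equivalence.

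There is no significant obstacle here: once the framework from the earlier sections (continuity of the extension, $G \subset U$, and closure of $\A$ under duals and tensor products, which already gives the $*$-algebra structure of $\CC$) is in hand, the proof reduces to checking the standard Stone--Weierstrass hypotheses for $\CC\mid_G$.
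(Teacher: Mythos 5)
Your proof is correct and follows essentially the same route as the paper: the paper also derives the lemma by noting that $\CC\mid_G$ is a bounded (by unitarity, since $G \subset U$) separating $*$-subalgebra of $C_b(G,\C)$ and invoking the Stone--Weierstrass criterion for tight Borel measures from Bogachev (Exercise 7.14.79). Your additional checks (constants from the trivial representation, the entry-wise reading of $\mu(M)=\nu(M)$) just make explicit what the paper leaves to the preceding discussion.
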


We show in Theorem~\ref{thm_sepPointsUnitary} that in fact $\A(\R^d)$ separates the points of $E(\R^d)$.

\subsection{Separation of points}

We investigate conditions under which algebra homomorphisms of $E$ separate points. Though ultimately we apply the theory to the case $V = \R^d$, the arguments used in the general case are exactly the same and we provide them here.

For a Banach algebra $A$ and $M \in \LLL(V,A)$, let $(\lambda M)$ denote the algebra homomorphism on $E$ induced by $\lambda M \in \LLL(V,A)$ ($\lambda$ possibly complex if $A$ is over $\C$). For $\lambda \in \R$, let $\delta_\lambda: E\mapsto E$ denote the dilation operator $\delta_\lambda(x^0,x^1,\ldots) = (\lambda^0x^0, \lambda^1x^1,\ldots)$ (note that $(\lambda M) = M\delta_\lambda$ for $\lambda \in \R$).

\begin{lem}\label{lem_sepSubspace}
Let $V$ be locally convex, $A$ a Banach algebra and $M \in \LLL(V,A)$. Let $x \in E$ such that $M(x^k) \neq 0$ for some $k \geq 0$. Then there exists $\varepsilon > 0$ sufficiently small such that $(\varepsilon M)(x) \neq 0$.
\end{lem}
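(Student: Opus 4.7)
The plan is to expand $(\varepsilon M)(x)$ as a convergent power series in $\varepsilon$ with coefficients $M^{\otimes n}(x^n) \in A$, and then to isolate its first nonzero term.

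First I would establish the series representation
\[
(\lambda M)(x) = \sum_{n \geq 0} \lambda^n M^{\otimes n}(x^n) \qquad \text{in } A,
\]
for every $\lambda \in \R$. On $V^{\otimes n}\subset T(V)$ the algebra homomorphism extending $\lambda M$ agrees with $\lambda^n M^{\otimes n}$, so each truncation satisfies $(\lambda M)\bigl(T^{(N)}x\bigr) = \sum_{n=0}^N \lambda^n M^{\otimes n}(x^n)$. By Corollary~\ref{cor_truncsConv}, $T^{(N)}(x)\to x$ in $E$, and continuity of $(\lambda M):E\to A$ yields the identity.

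Next I would produce a uniform absolute bound. The seminorm $\gamma(v):=\|M(v)\|_A$ is continuous on $V$ with $\gamma(M)\leq 1$, so $\|M^{\otimes n}(x^n)\|_A\leq \gamma^{\otimes n}(x^n)$. Fix any fundamental family $\Lambda$ and pick $\xi\in\Lambda$ with $\xi\geq\gamma$; by Corollary~\ref{cor_radiusOfConv} the constant
\[
C := \sum_{n\geq 0}\xi^{\otimes n}(x^n) = \exp(\xi)(x)
\]
is finite, and hence $\sum_{n\geq 0}\|M^{\otimes n}(x^n)\|_A \leq C$.

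Finally, by hypothesis the set $\{k\geq 0 : M(x^k)\neq 0\}$ is nonempty; let $k_0$ be its minimum. Splitting off the leading term, for any $\varepsilon\in(0,1]$,
\[
(\varepsilon M)(x) \;=\; \varepsilon^{k_0} M(x^{k_0}) + R(\varepsilon), \qquad \|R(\varepsilon)\|_A \leq \sum_{n>k_0}\varepsilon^n \|M^{\otimes n}(x^n)\|_A \leq \varepsilon^{k_0+1} C.
\]
Choosing $\varepsilon$ strictly positive and smaller than both $1$ and $\|M(x^{k_0})\|_A/(C+1)$ forces $\|(\varepsilon M)(x)\|_A \geq \varepsilon^{k_0}\bigl(\|M(x^{k_0})\|_A - \varepsilon C\bigr) > 0$. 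The only mild obstacle is justifying step one; once the series expansion and the bound $\exp(\xi)(x)<\infty$ are in hand, the extraction of the leading term is routine.
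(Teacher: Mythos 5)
Your proof is correct and follows essentially the same route as the paper: the paper's (terser) argument also observes that $x\mapsto\norm{M(x)}$ is dominated by a projective extension seminorm, so $\sum_{k\geq 0}\norm{M(x^k)}$ converges by Corollary~\ref{cor_radiusOfConv}, and then the first nonvanishing term dominates for small $\varepsilon$. Your write-up merely makes explicit the series identity via Corollary~\ref{cor_truncsConv} and the leading-term estimate that the paper leaves implicit.
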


\begin{proof}
Since $\norm{M(x)}$ is a semi-norm on $E$, $\sum_{k \geq 0} \norm{M(x^k)}$ converges by Corollary~\ref{cor_radiusOfConv}, from which the conclusion follows.
\end{proof}

Let $\F$ be a field and $A$ an $\F$-algebra. A polynomial identity over $\F$ on a subset $Q \subseteq A$ is a polynomial in non-commuting indeterminates $x_1,\ldots, x_k$, with coefficients in $\F$, which is non-zero (that is, not every coefficient is zero) and which vanishes under all substitutions of variables $x_1,\ldots, x_k \in Q$. We refer to Giambruno and Zaicev~\cite{polyIden} for further details.

Let $V$ be a vector space with Hamel basis $\Theta$. Then the set of pure tensors $\Theta^{\otimes k} = \{v_1 \ldots v_k \mid v_j \in \Theta, 1\leq j\leq k\}$ is a Hamel basis for $V^{\otimes k}$.
Thus for every $x \in V^{\otimes k}$ define $\Theta_x$ as the finite set of vectors in $\Theta$ which appear in the representation of $x$ in the basis $\Theta^{\otimes k}$. Define $f^\Theta_x$ the canonical formal non-commuting polynomial in indeterminates $\Theta_x$ associated with $x$. As $\Theta_x$ is a finite set, the following is a consequence of the Hahn-Banach theorem.

\begin{lem}\label{lem_nonZero}
Let $V$ be a locally convex space with Hamel basis $\Theta$, $A$ an algebra which is a topological vector space, and $Q \subseteq A$ a subset. Let $k \geq 0$ and $x \in V^{\otimes k}$. The following two assertions are equivalent.
\begin{enumerate}[label=\upshape(\roman*\upshape)]
\item \label{point_1} $f^\Theta_x$ is not a polynomial identity over $\R$ on $Q$.
\item \label{point_2} There exists a continuous linear map $M : V \mapsto \spn Q$ such that $M(x)$ is non-zero and $M(v)$ is in $Q$ for all $v \in \Theta_x$.
\end{enumerate}
\end{lem}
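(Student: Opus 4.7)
My plan is to prove the two implications separately, with the $(ii) \Rightarrow (i)$ direction being immediate and the reverse direction requiring a Hahn--Banach extension argument to keep the image inside $\spn Q$ rather than all of $A$.

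For $(ii) \Rightarrow (i)$: suppose such a continuous linear $M : V \mapsto \spn Q$ exists. Extending $M$ to the algebra homomorphism $M : T(V) \mapsto A$ (via the universal property of the tensor algebra), and writing $x = \sum_I a_I v_{i(1)}\otimes\ldots\otimes v_{i(k)}$ in the basis $\Theta_x^{\otimes k}$, the substitution $X_v \leftarrow M(v)$ in $f^\Theta_x$ yields exactly $\sum_I a_I M(v_{i(1)})\ldots M(v_{i(k)}) = M(x) \neq 0$. Since $M(v) \in Q$ for each $v \in \Theta_x$, this substitution witnesses that $f^\Theta_x$ is not a polynomial identity on $Q$.

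For $(i) \Rightarrow (ii)$: fix a substitution $\Theta_x \ni v \mapsto q_v \in Q$ for which $f^\Theta_x(\{q_v\}) \neq 0$. First I would define the map prescriptively on the finite-dimensional subspace $W := \spn \Theta_x \subseteq V$ by $v \mapsto q_v$ (well-defined and linear because $\Theta_x$, being a subset of the Hamel basis $\Theta$, is linearly independent). This takes values in the finite-dimensional subspace $W' := \spn \{q_v : v \in \Theta_x\} \subseteq \spn Q$.

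The key step is now to extend this to a continuous linear map $V \mapsto W'$. Enumerate $\Theta_x = \{v_1, \ldots, v_n\}$ and let $e_i^* \in W^*$ be the dual basis, which is automatically continuous since $W$ is finite-dimensional and Hausdorff. By the Hahn--Banach theorem for locally convex spaces, each $e_i^*$ extends to a continuous linear functional $\tilde e_i^* \in V'$. Setting
\[
M(v) := \sum_{i=1}^n \tilde e_i^*(v)\, q_{v_i},
\]
I obtain a continuous linear map $M : V \mapsto W' \subseteq \spn Q$ with $M(v_i) = q_{v_i} \in Q$ for each $i$. Extending $M$ algebraically to $T(V) \mapsto A$, the argument from the first implication (applied in reverse) gives $M(x) = f^\Theta_x(\{q_v\}) \neq 0$, completing the proof.

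The main subtlety, and the reason the statement needs explicit attention rather than being a one-line corollary of Hahn--Banach, is that the image is required to lie in $\spn Q$ rather than merely in $A$; the device of extending scalar coordinate functionals and recombining them with the fixed vectors $q_{v_i}$ is what enforces this constraint using only finitely many applications of Hahn--Banach.
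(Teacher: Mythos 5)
Your proof is correct and follows essentially the same route as the paper, which disposes of the lemma in one line by noting that, since $\Theta_x$ is finite, the claim is a consequence of the Hahn--Banach theorem. Your write-up simply fills in the details the paper leaves implicit: the trivial implication by direct substitution, and the continuous extension obtained by Hahn--Banach applied to the coordinate functionals on $\spn\Theta_x$ recombined with the chosen elements of $Q$.
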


\begin{remark}
If one is not interested in the topological aspects, the same statement holds if one replaces $\R$ by a field $\F$, $V$ by a vector space over $\F$, $A$ by an $\F$-algebra, and drops the continuity assumption in~\ref{point_2}.
\end{remark}

\subsection{Polynomial identities over Lie algebras}

From Lemmas~\ref{lem_sepSubspace} and~\ref{lem_nonZero}, it is clear that to study how representations in $\A(\R^d)$ separate the points of $E(\R^d)$, we must look at polynomial identities in unitary Lie algebras.
Let $m \geq 1$ be an integer and denote by $\cdot^s$ the symplectic involution on $M_{2m}(\C)$, which we recall is an involution of the first kind (see~\cite{Giambruno95}).

Recall the real Lie algebra $\symp(m) = \{u \in \uu(\C^{2m}) \mid u^s + u = 0\}$ ($\symp(m)$ is the Lie algebra of the compact symplectic group $Sp(m)$).
A closely related complex Lie subalgebra of $\gl(\C^{2m})$ is $\symp(m,\C) = \{u \in M_{2m}(\C) \mid u^s + u= 0\}$.
It holds that $\symp(m,\C)$ is the complexification of $\symp(m)$.

We now illustrate our interest in the Lie algebras $\symp(m)$ and $\symp(m,\C)$. From the remark that $\symp(m,\C) = \{u - u^s \mid u \in M_{2m}(\C)\}$, we may reformulate a result due to Giambruno and Valenti as follows.

\begin{theorem}[\cite{Giambruno95} Theorem~6] \label{thm_3n}
Let $m \geq 2$ and $f(x_1,\ldots,x_k)$ a polynomial identity over $\C$ on $\symp(m,\C) \subset M_{2m}(\C)$. Then $\deg(f) > 3m$.
\end{theorem}

The following is a slight generalization of~\cite{polyIden} Theorem~1.3.2 and follows from exactly the same inductive proof.

\begin{lem}\label{lem_idenOnSubspace}
Let $\F$ be an infinite field, $A$ an $\F$-algebra and $Q$ a linear subspace of $A$. If $f$ is a polynomial identity over $\F$ on $Q$, then every multi-homogeneous component of $f$ is a polynomial identity over $\F$ on $Q$.
\end{lem}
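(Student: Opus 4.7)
The plan is to follow the standard Vandermonde-style argument used for the case $Q = A$ in~\cite{polyIden}, noting that the only property of the ambient algebra required by that proof is closure of $Q$ under $\F$-scalar multiplication. Decompose the polynomial $f \in \F\langle x_1,\ldots,x_k\rangle$ into its multi-homogeneous components
\[
f = \sum_{\alpha = (d_1,\ldots,d_k)} f_\alpha,
\]
where $f_\alpha$ is the sum of those monomials in which each $x_i$ appears exactly $d_i$ times.

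Fix arbitrary elements $x_1,\ldots,x_k \in Q$. Since $Q$ is a linear subspace of $A$, for every choice of scalars $(\lambda_1,\ldots,\lambda_k) \in \F^k$ the tuple $(\lambda_1 x_1,\ldots,\lambda_k x_k)$ still lies in $Q^k$, so the hypothesis yields
\[
0 = f(\lambda_1 x_1,\ldots,\lambda_k x_k) = \sum_\alpha \lambda_1^{d_1}\cdots\lambda_k^{d_k}\, f_\alpha(x_1,\ldots,x_k).
\]
Viewing the right-hand side as a polynomial in the indeterminates $\lambda_1,\ldots,\lambda_k$ with coefficients $f_\alpha(x_1,\ldots,x_k) \in A$, I would peel off one variable at a time: for each fixed $\lambda_2,\ldots,\lambda_k$ the expression is a polynomial in $\lambda_1$ of bounded degree that vanishes for every $\lambda_1 \in \F$; since $\F$ is infinite, a Vandermonde determinant on $d_1+1$ distinct values forces the $\lambda_1$-coefficients to vanish in $A$. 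Iterating on $\lambda_2,\ldots,\lambda_k$ collapses the sum down to each individual multi-degree, giving $f_\alpha(x_1,\ldots,x_k) = 0$ for every $\alpha$.

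Because $x_1,\ldots,x_k \in Q$ were arbitrary, each $f_\alpha$ is a polynomial identity on $Q$. There is no real obstacle here: the only place the statement departs from the textbook version (where $Q = A$) is the substitution step $\lambda_i x_i \in Q$, which is precisely what the hypothesis that $Q$ is a linear subspace guarantees. The multi-homogeneous decomposition and the iterated Vandermonde argument are purely formal and transfer verbatim from the classical proof.
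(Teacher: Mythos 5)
Your proof is correct and is essentially the argument the paper intends: the paper simply notes that the lemma ``follows from exactly the same inductive proof'' as Theorem~1.3.2 of Giambruno--Zaicev, which is precisely the scaling-plus-Vandermonde induction you wrote out. You also correctly isolate the one point where the generalization enters, namely that the substitution $\lambda_i x_i \in Q$ only requires $Q$ to be closed under scalar multiplication, which holds since $Q$ is a linear subspace.
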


We remark that every multi-homogeneous polynomial identity over $\C$ (and a fortiori over $\R$) on $\symp(m)\subset M_{2m}(\C)$ is also a polynomial identity over $\C$ on its complexification $\symp(m,\C)$. Thus if $f$ is a polynomial identity over $\R$ on $\symp(m)$ for $m \geq 2$, then by Theorem~\ref{thm_3n} and Lemma~\ref{lem_idenOnSubspace}, every multi-homogeneous component of $f$ has degree greater than $3m$. Together with Lemmas~\ref{lem_sepSubspace} and~\ref{lem_nonZero}, we have the following result.

\begin{theorem}\label{thm_sepPointsUnitary}
Let $x \in E(\R^d)$ such that $x^k \neq 0$ for some $k \geq 0$. Then for any integer $m \geq \max\{2,k/3\}$ there exists $M \in \LLL(\R^d, \symp(m))$ such that $M(x) \neq 0$.
In particular, $\A(\R^d)$ separates the points of $E(\R^d)$.
\end{theorem}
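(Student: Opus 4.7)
The plan is to chain the preparatory results in the following order: reduce via Lemma~\ref{lem_sepSubspace} from the full element $x$ to a single homogeneous component $x^k$, use Lemma~\ref{lem_nonZero} to rephrase the existence of a nonzero representation in terms of a polynomial-identity statement, and then rule out that identity using Theorem~\ref{thm_3n} together with the multi-homogeneous decomposition of Lemma~\ref{lem_idenOnSubspace}.

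Fix $x \in E(\R^d)$ with $x^k \neq 0$, an integer $m \geq k/3$, and take $\Theta = \{e_1,\ldots,e_d\}$ as the standard Hamel basis of $\R^d$. The canonical non-commuting polynomial $f^\Theta_{x^k}$ is then a non-zero real polynomial in a finite set $\Theta_{x^k}$ of indeterminates, homogeneous of total degree $k$. Applying Lemma~\ref{lem_nonZero} with $V = \R^d$, the real Banach algebra $A = M_{2m}(\C)$, and the real linear subspace $Q = \symp(m)$ (so that $\spn Q = \symp(m)$), the existence of $M \in \LLL(\R^d, \symp(m))$ with $M(x^k) \neq 0$ becomes equivalent to $f^\Theta_{x^k}$ failing to be a polynomial identity over $\R$ on $\symp(m)$.

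I would then verify the latter by contradiction: if $f^\Theta_{x^k}$ were a polynomial identity over $\R$ on $\symp(m)$, then by Lemma~\ref{lem_idenOnSubspace} applied to the infinite field $\R$ and the linear subspace $\symp(m) \subset M_{2m}(\C)$, every multi-homogeneous component would again be an $\R$-polynomial identity on $\symp(m)$. By the remark preceding the theorem, each such multi-homogeneous identity extends by $\C$-linearity to a $\C$-polynomial identity on the complexification $\symp(m,\C)$, and Theorem~\ref{thm_3n} then forces every nonzero multi-homogeneous component to have degree strictly greater than $3m \geq k$. Since each multi-homogeneous component of $f^\Theta_{x^k}$ has degree at most $k$, all components must vanish, contradicting $x^k \neq 0$. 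Lemma~\ref{lem_nonZero} thus supplies the desired $M$, and Lemma~\ref{lem_sepSubspace} (applied with $A = M_{2m}(\C)$) yields $\varepsilon > 0$ with $(\varepsilon M)(x) \neq 0$; as $\symp(m)$ is a linear subspace, $\varepsilon M$ still belongs to $\LLL(\R^d, \symp(m))$.

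The \emph{in particular} clause is immediate: any nonzero $x \in E(\R^d)$ has some $x^k \neq 0$, and since $\symp(m) \subset \uu(\C^{2m})$, the extension of the $\varepsilon M$ just produced belongs to $\A(\R^d)$ and does not vanish at $x$. The main technical subtlety is the passage between scalar fields — Theorem~\ref{thm_3n} naturally governs $\C$-identities on $\symp(m,\C)$, whereas Lemma~\ref{lem_nonZero} delivers an $\R$-statement on the real Lie algebra $\symp(m)$ — and it is precisely the multi-homogeneous decomposition together with the $\C$-linear extension to the complexification that bridges the two.
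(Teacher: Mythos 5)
Your proposal is correct and follows essentially the same route as the paper: it chains Lemma~\ref{lem_nonZero} (translating $M(x^k)\neq 0$ into the failure of a polynomial identity on $\symp(m)$), Lemma~\ref{lem_idenOnSubspace} plus the complexification remark to invoke Theorem~\ref{thm_3n} and rule out an identity of degree $k\leq 3m$, and finally Lemma~\ref{lem_sepSubspace} to rescale and get $(\varepsilon M)(x)\neq 0$ with $\varepsilon M$ still valued in $\symp(m)\subset\uu(\C^{2m})$. No gaps; this is the paper's argument, just written out in more detail.
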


\begin{remark}
The necessity that $V = \R^d$ only came into the above argument to ensure that $V^{\otimes k} = V^{\widehat \otimes k}$. If one was able to find an analogue of
Lemma~\ref{lem_nonZero} for elements $x \in V^{\widehat \otimes k}$, or an analogue of
Theorem~\ref{thm_3n} for appropriate series of polynomials of bounded degree but an unbounded number of indeterminates, then one could readily extend Theorem~\ref{thm_sepPointsUnitary} to the case when $V$ is infinite dimensional.
\end{remark}

\begin{cor}\label{cor_MAP}
The group $U(\R^d)$ is maximally almost periodic.
\end{cor}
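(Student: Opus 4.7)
The plan is to check the definition of maximal almost periodicity directly by showing that the restrictions $M\mid_U$ for $M \in \A(\R^d)$ give enough continuous finite-dimensional unitary representations of $U(\R^d)$ to separate its points. All the ingredients have essentially been assembled in Section~\ref{sec_reps}; only their assembly remains.

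First I would verify that each $M \in \A(\R^d)$ restricts to a continuous finite-dimensional unitary representation of $U(\R^d)$ in the group-theoretic sense. Continuity and multiplicativity are immediate from the fact that $M$ is a continuous algebra homomorphism $E \to \End(H_M)$. For the unitarity, recall that $M$ arises from a linear map into the anti-Hermitian Lie algebra $\uu(H_M)$, so on generators one has $M(\alpha v) = M(-v) = -M(v) = M(v)^*$, and hence $M \circ \alpha = (\cdot)^* \circ M$ on all of $E$ by continuity and multiplicativity of both sides. For $g \in U(\R^d)$, $\alpha(g) = g^{-1}$, so $M(g)^* = M(\alpha g) = M(g^{-1}) = M(g)^{-1}$, showing $M(g)$ is unitary.

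Next I would use separation. Let $g, h \in U(\R^d)$ with $g \neq h$. Then $x := g - h$ is a nonzero element of $E(\R^d)$, so $x^k \neq 0$ for some $k \geq 0$. By Theorem~\ref{thm_sepPointsUnitary}, there exists $M \in \LLL(\R^d, \symp(m))$ (for any $m \geq k/3$) with $M(x) \neq 0$, i.e.\ $M(g) \neq M(h)$. Since $\symp(m) \subset \uu(\C^{2m})$, the extension of $M$ belongs to $\A(\R^d)$, and by the previous paragraph its restriction to $U(\R^d)$ is a continuous finite-dimensional unitary representation. Thus the family of such representations separates points of $U(\R^d)$, which is precisely the assertion that $U(\R^d)$ is maximally almost periodic.

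There is no real obstacle here beyond bookkeeping; the content is already packaged in Theorem~\ref{thm_sepPointsUnitary}. The only point to handle with care is the equality $M \circ \alpha = (\cdot)^* \circ M$, which one should verify holds on $E$ and not merely on $V$, but this follows directly from continuity and from the fact that $\alpha$ and $(\cdot)^*$ are both continuous anti-homomorphisms.
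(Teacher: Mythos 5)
Your proposal is correct and follows the same route the paper intends: Corollary~\ref{cor_MAP} is an immediate consequence of Theorem~\ref{thm_sepPointsUnitary} together with the earlier observation that every $M \in \A$ preserves the involution and hence restricts to a continuous finite-dimensional unitary representation of $U(\R^d)$. Your careful verification of $M\circ\alpha = (\cdot)^*\circ M$ on all of $E$ (via both sides being continuous anti-homomorphisms agreeing on $V$) and the separation argument via $x = g-h$ are exactly the bookkeeping the paper leaves implicit.
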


\begin{remark}\label{remark_notLocCompact}
For $d \geq 2$, the topological group $G(\R^d)$ (and thus $U(\R^d)$) is not locally compact. To observe this, let $V = \R^d$ and $L(V)$ be the smallest Lie algebra in $T(V)$ containing $V$.
Since every $\ell \in L(V)$ satisfies $\Delta(\ell) = 1\otimes \ell + \ell\otimes 1$ (\cite{FreeLieAlgebras} Theorem~1.4), a direct calculation shows that $\exp(\ell) \in G$.

Let $u,v \in V$ be linearly independent elements and $W = \spn{u,v}$. Observe that $L(W)$ contains a non-zero element in $W^{\otimes k}$ for every $k \geq 1$. In light of Proposition~\ref{prop_topEqual}, for any neighborhood of zero $B$ of $L(W)$ one can construct a sequence $(\ell_n)_{n\geq 1} \in B$ such that $\gamma(\exp(\ell_i) - \exp(\ell_j)) \geq 1$ for all $i\neq j$ and some semi-norm $\gamma$ on $E$. Since $\exp: L(W) \mapsto G$ is continuous (\cite{Arens46} Theorem~3), it follows that no neighborhood of the identity in $G$ is contained in a sequentially compact set (the same argument more generally applies whenever $V$ is metrizable).
\end{remark}

It follows from Corollary~\ref{cor_permanence} that $E$ is Polish whenever $V$ is metrizable and separable, and thus $G$, as a closed subset of $E$, is also Polish. By Lemma~\ref{lem_compactOpen} and Theorem~\ref{thm_sepPointsUnitary} we have the following.

\begin{cor}\label{cor_uniqueMeasure}
For Borel probability measures $\mu$ and $\nu$ on $G(\R^d)$, it holds that $\mu = \nu$ if and only if $\mu(f) = \nu(f)$ for all $f \in \CC(\R^d)$, or equivalently, $\mu(M) = \nu(M)$ for all $M \in \A(\R^d)$.
\end{cor}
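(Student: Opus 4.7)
The plan is to assemble Corollary~\ref{cor_uniqueMeasure} as a direct combination of three ingredients already in place: the topological regularity of $G(\R^d)$, the separation property of $\A(\R^d)$ established in Theorem~\ref{thm_sepPointsUnitary}, and the Stone--Weierstrass-based measure-determination result recorded in Lemma~\ref{lem_compactOpen}. The only thing really to check is that the ``tight Borel measure'' hypothesis of Lemma~\ref{lem_compactOpen} is automatic here, which will come down to showing $G(\R^d)$ is Polish.

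First I would verify that $G(\R^d)$ is Polish. Since $\R^d$ is metrizable and separable, Corollary~\ref{cor_permanence} gives that $E(\R^d)$ is metrizable and separable; it is complete by definition (it is the completion of $E_a(\R^d)$), so $E(\R^d)$ is Polish. Since $\Delta: E \to E^{\widehat\otimes 2}$ and the map $x\mapsto x\otimes x$ are continuous, $G(\R^d)$ is closed in $E(\R^d)$, and a closed subspace of a Polish space is Polish. Consequently, by Ulam's theorem every Borel probability measure on $G(\R^d)$ is tight (equivalently, Radon).

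Next I would invoke Theorem~\ref{thm_sepPointsUnitary}, which asserts that $\A(\R^d)$ separates the points of $E(\R^d)$, hence a fortiori the points of $G(\R^d)$. Recall also from the discussion following the definition of $\A$ and $\CC$ that each $M\in\A$ is a unitary representation of $G$, so every matrix coefficient $M_{u,v}$ lies in $C_b(G,\C)$, and that $\CC\mid_G$ forms a $*$-subalgebra of $C_b(G,\C)$. Since $M(x)\neq M(y)$ for some $M\in\A$ implies $M_{u,v}(x)\neq M_{u,v}(y)$ for some choice of $u,v\in H_M$, the algebra $\CC\mid_G$ also separates points of $G(\R^d)$.

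Finally, with tightness of $\mu,\nu$ and the separation of points by $\CC\mid_G$ in hand, Lemma~\ref{lem_compactOpen} applies verbatim and yields $\mu=\nu$ iff $\mu(f)=\nu(f)$ for all $f\in\CC(\R^d)$, which is equivalent to $\mu(M)=\nu(M)$ for every $M\in\A(\R^d)$ since the latter condition means $\EEE{\gen{M(X)u,v}}$ agrees for $X\sim\mu$ and $X\sim\nu$ for every choice of $u,v$. There is no real obstacle here: all of the work has already been carried out in Theorem~\ref{thm_sepPointsUnitary} and Lemma~\ref{lem_compactOpen}; the only minor point to be careful about is to make sure we are entitled to invoke automatic tightness, which is why the Polish reduction above is the essential (and only nontrivial) step of the argument.
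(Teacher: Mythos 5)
Your argument is correct and is essentially the paper's own: the text preceding the corollary deduces that $E(\R^d)$ is Polish from Corollary~\ref{cor_permanence}, notes $G(\R^d)$ is closed in $E(\R^d)$ hence Polish (so Borel measures are automatically tight), and then combines Lemma~\ref{lem_compactOpen} with Theorem~\ref{thm_sepPointsUnitary}. Your additional remarks on matrix coefficients and Ulam's theorem just make explicit what the paper leaves implicit.
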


For a Borel probability measure $\mu$ on $G(\R^d)$, with associated random variable $X$, we are thus able to define its characteristic function (or Fourier transform) by $\phi_X = \widehat \mu := \mu|_{\A}$, which uniquely characterizes $\mu$.

\section{Signatures of paths}\label{sec_signature}

We now discuss the space $E$ in the setting of rough paths theory. The main connection is that the signature of any geometric rough path on $\R^d$ lies in $G(\R^d)$.
We treat rough paths in the sense of Lyons and refer to~\cite{FrizVictoir10} and~\cite{LyonsQian02} for details and terminology.

Let $V$ be a Banach space, $p\geq 1$, $T > 0$, and $\Delta_{[0,T]} = \{(s,t)\mid 0\leq s\leq t\leq T\}$. Let $\omega$ denote a control function and $T^n(V) = \bigoplus_{0 \leq k \leq n} V^{\widehat\otimes k}$ the truncated tensor algebra. We recall that the space of $p$-rough paths $\Omega_p(V)$ is the collection of all continuous multiplicative maps $\x : \Delta_{[0,T]} \mapsto T^{\floor{p}}$ with $p$-variation controlled by some control $\omega$, that is,
\begin{enumerate}[label=\upshape(\alph*\upshape)]
\item \label{point_mult} $\x^0_{s,t} = 1$ and $\x_{s,t}\x_{t,u} = \x_{s,u}$ for all $0\leq s\leq t \leq u \leq T$, and
\item \label{point_pVarCont} for some control $\omega$ one has
\begin{equation}\label{eq_factorialBound}
\sup_{0 \leq k \leq \floor{p}} \left((k/p)!\beta_p\norm{\x^k_{s,t}}\right)^{p/k} \leq \omega(s,t), \; \forall (s,t) \in \Delta_{[0,T]},
\end{equation}
\end{enumerate}
where $\beta_p$ is a constant that only depends on $p$.

The map $\x$ may alternatively be viewed as a path $\x_{0,\cdot} : [0,T] \mapsto T^{\floor{p}}, t \mapsto \x_{0,t}$ of finite $p$-variation, that is,
\begin{equation}\label{eq_pVarDef}
\norm{\x}_{\pvar;[0,T]}
:= \sum_{0 \leq k \leq \floor{p}} \sup_{\DD \subset [0,T]} \left( \sum_{t(j) \in \DD} \left((k/p)!\beta_p\norm{\x^k_{t(j),t(j+1)}}\right)^{p/k} \right)^{1/p}
\end{equation}
is finite, which completely characterizes $\x$ due to the multiplicative property~\ref{point_mult} (noting that $\x_{s,t} = \x_{0,t}\x_{0,s}^{-1}$).

Let $\x \in \Omega_p$ satisfy~\eqref{eq_factorialBound} for some control $\omega$. A fundamental result of rough paths theory is that for all $n \geq \floor{p}$ there exists a unique lift $\Lyons_n(\x): \Delta_{[0,T]}\mapsto T^{n}$ such that~\ref{point_mult} and~\ref{point_pVarCont} remain true for the same $\omega$ and with $\sup_{0 \leq k \leq \floor{p}}$ replaced by $\sup_{0 \leq k \leq n}$ in~\eqref{eq_factorialBound} (\cite{LyonsQian02} Theorem~3.1.2). Equivalently, there exists a unique lift to the entire product space $\Lyons(\x) : \Delta_{[0,T]}\mapsto P = \prod_{k \geq 0} V^{\widehat \otimes k}$ such that~\ref{point_mult} and~\ref{point_pVarCont} remain true for the same $\omega$ and with $\sup_{0 \leq k \leq \floor{p}}$ replaced by $\sup_{0 \leq k}$ in~\eqref{eq_factorialBound}.

An immediate consequence of the factorial decay in~\eqref{eq_factorialBound} is that the lift $\Lyons(\x)$ takes values in the space $E$ for any $p \geq 1$ (see Corollary~\ref{cor_radiusOfConv}).

\begin{remark}
While the value of $\beta_p$ does not affect the definition of the space $\Omega_p$, its existence is crucial to ensure the factorial decay arising from the lift. On this point, we mention the work of Hara and Hino~\cite{HaraHino10} who have resolved a conjecture on the optimal possible value of $\beta_p$.
\end{remark}

We thus make a canonical extension of the space $\Omega_p$.

\begin{definition}
Define the space $\Omega E_p$ as the set of maps $\x : \Delta_{[0,T]} \mapsto E$ which satisfy~\ref{point_mult} and~\ref{point_pVarCont} with $\sup_{0 \leq k \leq \floor{p}}$ replaced by $\sup_{0 \leq k}$ in~\eqref{eq_factorialBound}.
\end{definition}

It follows that the lift $\Lyons$ is a bijective map from $\Omega_p$ to $\Omega E_p$, with inverse provided naturally by the $\floor{p}$-th level truncation $(\x^0_{s,t}, \x^1_{s,t}, \ldots) \mapsto (\x^0_{s,t}, \x^1_{s,t}, \ldots, \x^{\floor{p}}_{s,t})$.

The element $\Lyons(\x)_{0,T} \in E$ is called the \emph{signature} of a rough path $\x \in \Omega_p$. For $1 \leq p < 2$, $\Lyons(\x)_{0,T}$ is precisely the sequence of iterated integrals of the path $\x_{0,\cdot} : [0,T] \mapsto V$ taken in the sense of Young.

\begin{remark}
The only property of the projective tensor norm used above is that the projective extension provides a sub-multiplicative system of norms. Completely analogous results hold true if one equips $T(V)$ with any system of sub-multiplicative norms and defines $E$ as the completion of $T(V)$ under scalar dilations of these norms. Note that in the case $V = \R^d$, all these systems lead to identical definitions and topologies on the space $E$.
\end{remark}

The lift $\Lyons$ moreover exhibits a natural continuity property with respect to the $p$-variation topology on $\Omega_p$. For $\x, (\x(n))_{n \geq 1} \in \Omega_p$, a control $\omega$ and a sequence of positive reals $(a_n)_{n \geq 1}$ with $a_n \geq 1$, consider the statement
\begin{equation}\label{eq_pTopDef}
\begin{split}
&\omega \text{ controls the } p\text{-variation of } \x \text{ and } \x(n) \text{ for all $n \geq 1$, and } \\
&\sup_{0 \leq k \leq \floor{p}} \left((k/p)!\beta_pa_n\norm{\x(n)^k_{s,t} - \x^k_{s,t}}\right)^{p/k} \leq \omega(s,t), \; \; \forall (s,t)\in \Delta_{[0,T]}.
\end{split}
\end{equation}

When~\eqref{eq_pTopDef} is satisfied for some control $\omega$ and a sequence $(a_n)_{n \geq 1}$ such that $a_n \geq 1$ and $a_n \rightarrow \infty$, we say that $\x(n) \rightarrow \x$ in the $p$-variation topology of $\Omega_p$. One makes the same definition for $\x, (\x(n))_{n \geq 1}\in \Omega E_p$ with $\sup_{0 \leq k \leq \floor{p}}$ replaced by $\sup_{0 \leq k}$ in~\eqref{eq_pTopDef}.

The following is an immediate consequence of the continuity of the individual lifts $\Lyons_n$ for $n \geq \floor{p}$ (\cite{LyonsQian02} Theorem~3.1.3).

\begin{proposition}
If $\x, (\x(n))_{n \geq 1} \in \Omega_p$ satisfy~\eqref{eq_pTopDef} for some $\omega$ and $(a_n)_{n \geq 1}$ with $a_n \geq 1$, then $\Lyons(\x), (\Lyons(\x(n)))_{n\geq 1} \in \Omega E_p$ satisfy~\eqref{eq_pTopDef} for the same control $\omega$ and sequence $(a_n)_{n \geq 1}$.

In particular, $\Lyons$ is continuous (and thus a homeomorphism) when $\Omega_p$ and $\Omega E_p$ are equipped with their respective $p$-variation topologies.
\end{proposition}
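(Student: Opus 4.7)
The plan is to reduce the statement to the finite-level Lyons extension theorem (\cite{LyonsQian02} Theorem~3.1.3) applied level by level, using the fact that the defining bound in~\eqref{eq_pTopDef} takes exactly the form required by that theorem for any fixed $n \geq \floor{p}$.

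More precisely, fix $n \geq \floor{p}$. The hypothesis~\eqref{eq_pTopDef} asserts that $\omega$ controls the $p$-variation of both $\x$ and $\x(n)$, and that
\[
\norm{\x(n)^k_{s,t} - \x^k_{s,t}} \leq \frac{1}{a_n} \cdot \frac{\omega(s,t)^{k/p}}{(k/p)!\beta_p}, \quad 0 \leq k \leq \floor{p}.
\]
Since $a_n \geq 1$, this is exactly the ``$\varepsilon$-closeness'' input required by Theorem~3.1.3 of \cite{LyonsQian02} with $\varepsilon = 1/a_n$. That theorem then delivers the same bound (with the same $\omega$ and the same $\varepsilon = 1/a_n$) for all $0 \leq k \leq n$, i.e.\ after lifting the $p$-rough paths up to level $n$. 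The case $k=0$ is handled separately but is trivial, as $\x^0_{s,t} = \x(n)^0_{s,t} = 1$.

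Since $n \geq \floor{p}$ was arbitrary and the lifts at different levels are consistent by uniqueness, we obtain
\[
\sup_{0 \leq k} \bigl((k/p)!\beta_p\, a_n\,\norm{\Lyons(\x(n))^k_{s,t} - \Lyons(\x)^k_{s,t}}\bigr)^{p/k} \leq \omega(s,t),
\]
for all $(s,t)\in\Delta_{[0,T]}$, which is precisely the defining condition of~\eqref{eq_pTopDef} for $\Omega E_p$. The extension property of $\omega$ to the full signature (without the $a_n$ factor) follows analogously from the single-path lift theorem, so $\Lyons(\x),\Lyons(\x(n)) \in \Omega E_p$ as required.

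For the final sentence, note that $\Lyons^{-1}:\Omega E_p \to \Omega_p$ is simply the projection truncating at level $\floor{p}$, which is trivially continuous since any bound valid uniformly for all $k \geq 0$ is a fortiori valid for $k \leq \floor{p}$. Hence $\Lyons$ is a homeomorphism. The only subtle point, and the main thing to check, is that Theorem~3.1.3 of \cite{LyonsQian02} genuinely preserves the control $\omega$ and the scalar factor $1/a_n$ simultaneously when passing from level $\floor{p}$ to higher levels; this is the content of the constants in that theorem, and the condition $a_n \geq 1$ is precisely what makes the closeness parameter $\varepsilon = 1/a_n \leq 1$ admissible.
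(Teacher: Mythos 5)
Your argument is essentially the paper's own proof: the paper also deduces the statement directly from the continuity of the individual lifts $\Lyons_n$, $n \geq \floor{p}$, in \cite{LyonsQian02} Theorem~3.1.3, applied level by level with the same control $\omega$ and closeness parameter $1/a_n$, and notes that the inverse of $\Lyons$ is the trivially continuous truncation. Your proposal is correct and adds only the (harmless) explicit bookkeeping of the case $k=0$ and the consistency of the lifts.
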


We equip $\Omega_p$ with the $p$-variation topology and denote the evaluation map $\II^p_{[0,T]}: \Omega_p \mapsto E$, $\x \mapsto \Lyons(\x)_{0,T}$.

\begin{cor} \label{cor_contD1}
The map $\II^p_{[0,T]}$ is continuous.
\end{cor}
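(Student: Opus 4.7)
The plan is to reduce the statement to showing continuity of the evaluation map $\text{ev}_{0,T} : \Omega E_p \mapsto E$, $\x \mapsto \x_{0,T}$, and then verify this directly using the fundamental family of seminorms on $E$ together with the factorial decay built into the definition of $\Omega E_p$.

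First, by the proposition immediately preceding the corollary, the lift $\Lyons : \Omega_p \mapsto \Omega E_p$ is continuous. Since $\II^p_{[0,T]} = \text{ev}_{0,T} \circ \Lyons$, it suffices to show $\text{ev}_{0,T}$ is continuous. So assume $\x(n) \rightarrow \x$ in $\Omega E_p$, meaning there exist a control $\omega$ and a sequence $a_n \geq 1$ with $a_n \rightarrow \infty$ such that the $\Omega E_p$-analogue of~\eqref{eq_pTopDef} holds. Specializing to $(s,t)=(0,T)$ yields, for every $k \geq 0$,
\[
\norm{\x(n)^k_{0,T} - \x^k_{0,T}} \leq \frac{\omega(0,T)^{k/p}}{(k/p)!\,\beta_p\, a_n}.
\]

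Next, I invoke Proposition~\ref{prop_topEqual} (and Corollary~\ref{cor_radiusOfConv}): since $(c\normc)_{c \geq 1}$ is a fundamental family of norms on the Banach space $V$, the seminorms $\exp(c\normc)$ form a fundamental family on $E$, so convergence in $E$ is equivalent to convergence under each $\exp(c\normc)$. Using that the projective tensor norm of $c\normc$ on $V^{\widehat\otimes k}$ equals $c^k \normc$, I estimate
\[
\exp(c\normc)\bigl(\x(n)_{0,T} - \x_{0,T}\bigr)
= \sum_{k \geq 0} c^k \norm{\x(n)^k_{0,T} - \x^k_{0,T}}
\leq \frac{1}{\beta_p\, a_n} \sum_{k \geq 0} \frac{\bigl(c\,\omega(0,T)^{1/p}\bigr)^k}{(k/p)!}.
\]
The remaining series is an entire function in its argument (the denominator $(k/p)!$ grows faster than any exponential in $k$, by Stirling), so it defines a finite constant $C(c,\omega,p,T)$. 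Hence the whole expression is bounded by $C/(\beta_p a_n) \to 0$ as $n \to \infty$, giving convergence in every seminorm $\exp(c\normc)$ and therefore in $E$.

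There is no real obstacle here; the only point that deserves care is the justification that the series $\sum_k (c\omega(0,T)^{1/p})^k / (k/p)!$ converges, which is exactly the factorial decay that motivates the inclusion $\Lyons(\x)_{0,\cdot} \in E$ in the first place, as remarked after the definition of $\Omega E_p$. The use of the fundamental family $\exp(c\normc)$ on $E$ is the conceptual step that lets one pass from the termwise bounds provided by $\Omega E_p$-convergence to convergence in the locally $m$-convex topology of $E$.
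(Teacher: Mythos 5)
Your proof is correct and is essentially the argument the paper leaves implicit: compose the continuous lift $\Lyons$ (the preceding proposition) with evaluation at $(0,T)$, and convert the level-wise factorial bounds from $\Omega E_p$-convergence into convergence under each fundamental seminorm $\exp(c\normc)$ of $E$ via Proposition~\ref{prop_topEqual}. The only cosmetic point is the $k=0$ term (where the exponent $p/k$ is undefined), which is harmless since $\x(n)^0_{0,T}=\x^0_{0,T}=1$.
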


Recall that the space of geometric $p$-rough paths $G\Omega_p$ is the closure of $S_{\floor p}(\Omega_1)$ in $\Omega_p$. 
\begin{definition}
For $p \geq 1$, define $S_p(V) = \{\Lyons(\x)_{0,T} \mid \x \in G\Omega_p\} \subset E$ as the set of signatures of all geometric $p$-rough paths.
\end{definition}
We equip $S_p$ with the subspace topology from $E$. Observe that $S_1$ is dense in $S_p$ as a consequence of Corollary~\ref{cor_contD1}.

Remark that $S_p$ is closed under multiplication in $E$ and that for all $\x \in G\Omega_p$, the inverse of $\Lyons(\x)_{0,T}$ is $\Lyons(\y)_{0,T} = \alpha (\Lyons(\x)_{0,T})$, where $\y \in G\Omega_p$ is the reversal of $\x$ and $\alpha$ is the antipode of $E$ defined in Section~\ref{sec_groupLike} (\cite{LyonsQian02} Theorem~3.3.3). Thus $S_p$ is a subgroup of $U = \{g \in E \mid \alpha(g) = g^{-1}\}$.

\subsection{Finite dimensional case}\label{subsec_finiteDimCase}

In this section we consider $V = \R^d$. It follows that $P(\R^d)$ (resp. $E(\R^d)$) can be identified with the algebra of non-commuting formal power series in $d$ indeterminates (resp. with an infinite radius of convergence).

We remark that the coproduct $\Delta$ of $E(\R^d)$ is given by a locally finite formula involving the shuffle product (\cite{FreeLieAlgebras} Proposition~1.8) and an element $g \in E(\R^d)$ is in $G(\R^d)$ precisely when $(g^0, g^1, \ldots, g^n)$ is in the free $n$-step nilpotent Lie group $G^{n}(\R^d)$ for all $n \geq 1$ (\cite{Lyons07} Lemma~2.24).

A fundamental result of Chen~\cite{Chen57} is that the signature of a bounded variation path in $\R^d$ is a group-like element of $E(\R^d)$ (see also~\cite{Lyons07} Section~2.2.5), and thus $S_1(\R^d) \subset G(\R^d)$.
Since $G$ is closed in $E$, we immediately obtain the inclusions $S_p(\R^d) \subset \overline{S_1(\R^d)} \subseteq G(\R^d)$ for all $p \geq 1$.

A closely related set to $G\Omega_p(\R^d)$ is the space $WG\Omega_p(\R^d) \subset \Omega_p(\R^d)$ of \emph{weakly} geometric $p$-rough paths, that is, those $p$-rough paths $\x\in \Omega_p(\R^d)$ which take values in the free $\floor{p}$-step nilpotent Lie group, i.e.,
\[
(\x^0_{s,t},\x^1_{s,t}, \ldots, \x^{\floor{p}}_{s,t}) \in G^{\floor{p}}(\R^d), \; \forall(s,t) \in \Delta_{[0,T]}.
\]
We note the strict inclusions $G\Omega_p(\R^d) \subset WG\Omega_p(\R^d) \subset G\Omega_{p'}(\R^d)$ for any $p' > p$ (\cite{FrizVictoir10} Section~8.5), and thus $WS_p(\R^d) \subset S_{p'}(\R^d)$, where $WS_p(\R^d) = \{\Lyons(\x)_{0,T} \mid \x \in WG\Omega_p(\R^d)\}$. Thus all results stated for the sets $S_p(\R^d)$ have analogous versions for the sets $WS_p(\R^d)$.

\begin{proposition}\label{prop_sigmaCompact}
Let $p \geq 1$. Then $S_p(\R^d)$ is $\sigma$-compact in $G(\R^d)$.
In particular, $S_p(\R^d)$ is a Borel set of $G(\R^d)$.
\end{proposition}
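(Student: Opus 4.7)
The plan is to realize $S_p(\R^d)$ as a countable union of compact subsets of $G(\R^d)$, and to deduce the Borel conclusion from the fact that compact subsets of the Hausdorff space $G(\R^d)$ are automatically closed.

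First I would use reparameterization invariance of the signature. Any $\x \in G\Omega_p(\R^d)$, controlled by some control $\omega$, admits a reparameterization (via an increasing continuous bijection of $[0,T]$ to itself) whose new control is linear, of the form $\omega(s,t) = N(t-s)$ with $N \geq \omega(0,T)/T$. Letting $H_N \subset G\Omega_p(\R^d)$ denote the subset of rough paths whose $p$-variation is controlled by $\omega(s,t) = N(t-s)$, reparameterization invariance of $\Lyons(\cdot)_{0,T}$ gives the decomposition $S_p(\R^d) = \bigcup_{N \geq 1} \II^p_{[0,T]}(H_N)$.

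Next I would extract compactness of each piece. By~\eqref{eq_factorialBound}, every $\x \in H_N$ satisfies the uniform Hölder-type bound $\norm{\x^k_{s,t}} \leq (N(t-s))^{k/p}/((k/p)!\beta_p)$ at each level $k \leq \floor{p}$. Since $T^{\floor{p}}(\R^d)$ is finite-dimensional, Arzela-Ascoli applied level by level shows that $H_N$ is relatively compact in the topology of uniform convergence on $\Delta_{[0,T]}$; moreover the multiplicativity and the factorial bound pass to uniform limits. A standard interpolation argument (e.g.\ \cite{FrizVictoir10} Proposition~8.17) then upgrades uniform convergence within $H_N$ to convergence in the $q$-variation topology of $G\Omega_q(\R^d)$ for any $q > p$. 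Combined with continuity of the lift in $q$-variation (Corollary~\ref{cor_contD1} applied with parameter $q$) and with the agreement of $\II^q_{[0,T]}$ and $\II^p_{[0,T]}$ on $G\Omega_p \subset G\Omega_q$, this yields that each $\II^p_{[0,T]}(H_N)$ is compact in $E$, hence in $G(\R^d)$.

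The main obstacle is in this last step: one must verify that the uniform limit $\y$ of a sequence in $H_N$, a priori only a \emph{weakly} geometric $p$-rough path with the same control, still has signature $\Lyons(\y)_{0,T}$ lying in $\II^p_{[0,T]}(H_N)$ and not merely in the potentially larger set of signatures of weakly geometric rough paths. This is where the definition of $G\Omega_p$ as the $p$-variation closure of $\Omega_1$ must be used carefully, either via a diagonal argument approximating the limit by bounded-variation paths that approximate each $\x \in H_N$ in $p$-variation, or by realizing $\Lyons(\y)_{0,T}$ as the signature of a genuinely geometric rough path after a further adjustment of parameterization. Once this is established, the decomposition $S_p(\R^d) = \bigcup_N \II^p_{[0,T]}(H_N)$ presents $S_p(\R^d)$ as a countable union of closed sets, proving both $\sigma$-compactness and the Borel conclusion.
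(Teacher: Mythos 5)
Your overall strategy is the same as the paper's: decompose $S_p(\R^d)$ into images of bounded ``balls'' of rough paths, reparametrize to a uniform H{\"o}lder/linear-control bound, and use Arzel{\`a}--Ascoli plus interpolation together with continuity of the signature map (Corollary~\ref{cor_contD1}) at a coarser index. However, the decisive step is exactly the one you flag and then leave open: $\sigma$-compactness needs each piece $\II^p_{[0,T]}(H_N)$ to be \emph{compact}, i.e.\ every limit point of signatures must again be the signature of an element of $H_N$, and your argument only delivers relative compactness of the image. The two fixes you sketch do not obviously work at level $p$: approximating the uniform limit $\y$ by bounded-variation lifts \emph{in $p$-variation} would amount to proving that $\y$ is itself a geometric $p$-rough path, which is precisely what cannot be concluded (uniform convergence with uniform bounds only yields weak geometricity, and $WG\Omega_p \neq G\Omega_p$ in general), while a ``further adjustment of parameterization'' cannot turn a weakly geometric rough path into a geometric one, since the weakly/geometric distinction is reparametrization-invariant. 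So as written the proof is incomplete at its crucial point.

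The paper closes this by changing the index rather than staying at $p$: it fixes $p'>p$ with $\floor{p'}=\floor{p}$, passes to the set $C_p^r$ of $(1/p)$-H{\"o}lder reparametrizations of the ball $B_p^r$, and invokes the interpolation and compactness results of \cite{FrizVictoir10} (Lemma~5.12, Proposition~8.17) to obtain compactness of $C_p^r$ inside $(G\Omega_{p'}(\R^d), d_{p'})$ --- the point being that $WG\Omega_p(\R^d)\subset G\Omega_{p'}(\R^d)$, so the limiting objects are genuine geometric $p'$-rough paths on which the signature map $\II^{p'}_{[0,T]}$ is defined and continuous. Compactness of $\II^{p'}_{[0,T]}(B_p^r)=\II^{p'}_{[0,T]}(C_p^r)$ then follows from continuity of $\II^{p'}_{[0,T]}$, and $S_p(\R^d)=\bigcup_{r\geq 1}\II^{p'}_{[0,T]}(B_p^r)$ gives $\sigma$-compactness (and hence Borel measurability, as you note). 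To complete your proof you should incorporate this device --- work with the signature map at the auxiliary exponent $p'$ and use the cited compactness statement in $G\Omega_{p'}$ --- or otherwise supply an argument for the membership of the limiting signature in the image set, which your current sketch does not provide.
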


For the proof, we recall the (homogeneous) $p$-variation metric $d_{\pvar}$ on $\Omega_p(\R^d)$ under which $(\Omega_p(\R^d), d_{\pvar})$ is a complete metric space with a coarser topology than the $p$-variation topology, but for which convergence of a sequence in $d_{\pvar}$ implies the existence of a subsequence which converges in the $p$-variation topology (see \cite{FrizVictoir10} Section~8, \cite{LyonsQian02} Proposition~3.3.3, but note the differing notations for homogeneous and inhomogeneous metrics in the two texts; we use the notation of~\cite{FrizVictoir10}).

\begin{proof}
For $r > 0$, consider the set $B_p^r = \{\x \in G\Omega_p(\R^d) \mid \norm{\x}_{\pvar;[0,T]} \leq r\}$. For every $\x \in B_p^r$ there exists a suitable reparametrization $\y \in B_p^r$ for which $\norm{\x}_{\pvar;[0,T]} = \norm{\y}_{\pvar;[0,T]}$, $\Lyons(\x)_{0,T} = \Lyons(\y)_{0,T}$, and $t \mapsto \y_{0,t}$ is $(1/p)$-H{\"o}lder continuous with H{\"o}lder coefficient depending only on $r$ and $p$. Let $C_p^r \subset B_p^r$ be the set of all such reparametrizations.

Let $p' > p$ be such that $\floor {p'} = \floor p$. It follows from an interpolation estimate and the Arzel{\`a}-Ascoli theorem (\cite{FrizVictoir10} Lemma~5.12, Proposition~8.17) that $C_p^r$ is compact in $(G\Omega_{p'}(\R^d), d_{\pprimevar})$ and thus sequentially compact in $G\Omega_{p'}(\R^d)$ under the $p'$-variation topology.

Since $\II^{p'}_{[0,T]} : G\Omega_{p'}(\R^d) \mapsto S_{p'}(\R^d)$ is continuous by Corollary~\ref{cor_contD1}, and $\II^{p'}_{[0,T]}(C_p^r) = \II^{p'}_{[0,T]}(B_p^r)$, it follows that $\II^{p'}_{[0,T]}(B_p^r)$ is sequentially compact in $S_{p'}(\R^d)$, and thus compact. Since $S_p(\R^d) = \bigcup_{r\geq 1} \II^{p'}_{[0,T]} (B_p^r)$, it follows that $S_p(\R^d)$ is $\sigma$-compact in $G(\R^d)$.
\end{proof}

We lastly record here a consequence of Theorem~\ref{thm_sepPointsUnitary} and Theorem~4 of~\cite{Hambly10}, which strengthens Corollary~1.7 therein, and which was originally observed by Prof. Thierry L{\'e}vy.

\begin{cor}\label{cor_compactSep}
A path of bounded variation in $\R^d$ is tree-like if and only if its Cartan development into every finite-dimensional compact Lie group is trivial.
\end{cor}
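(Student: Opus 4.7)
The plan is to combine Theorem~4 of~\cite{Hambly10}, which identifies the tree-like bounded variation paths in $\R^d$ with those of trivial signature $\1 \in E(\R^d)$, with the separation result Theorem~\ref{thm_sepPointsUnitary}. Recall that the Cartan development at time $T$ of a driving path $X$ into a (finite-dimensional, real) Lie group $H$ with Lie algebra $\g$, associated to a linear map $M \in \LLL(\R^d, \g)$, is exactly $M_E(S(X)_{0,T}) \in H$, where $M_E$ denotes the continuous algebra-homomorphism extension of $M$ to $E(\R^d)$. Under this dictionary, the corollary reduces to the following: for $g \in S_1(\R^d)$, one has $g = \1$ if and only if $M_E(g) = I$ for every linear map $M \in \LLL(\R^d, \g)$ into the Lie algebra $\g$ of every finite-dimensional compact Lie group.

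One direction is immediate, since $M_E(\1) = I$ for any algebra-homomorphism extension. For the converse, suppose $g \neq \1$, and set $h := g - \1 \in E(\R^d)$. Since $g^0 = \1^0 = 1$, the level $h^0$ vanishes, but some $h^k$ with $k \geq 1$ is non-zero. Applying Theorem~\ref{thm_sepPointsUnitary} to $h$ yields an integer $m \geq k/3$ and $M \in \LLL(\R^d, \symp(m))$ with $M_E(h) \neq 0$, equivalently $M_E(g) \neq I$. The Lie algebra $\symp(m)$ is that of the compact symplectic group $Sp(m)$, exhibiting a finite-dimensional compact Lie group into which the Cartan development of the path is non-trivial.

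The only delicate point, and what distinguishes this corollary from Corollary~1.7 of~\cite{Hambly10} (which permits arbitrary finite-dimensional Lie groups), is that the unitary Lie algebras $\symp(m)$ singled out by Giambruno and Valenti's Theorem~\ref{thm_3n} and exploited in Theorem~\ref{thm_sepPointsUnitary} happen to be Lie algebras of \emph{compact} Lie groups. Thus the separating representations guaranteed by the theory already live on compact targets, and no further argument is required; I do not anticipate any substantive obstacle, as the identification of the Cartan development with $M_E$ evaluated at the signature is standard for bounded variation paths.
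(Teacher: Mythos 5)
Your proposal is correct and follows essentially the same route as the paper, which obtains the corollary precisely by combining Theorem~4 of~\cite{Hambly10} (tree-like $\Leftrightarrow$ trivial signature) with Theorem~\ref{thm_sepPointsUnitary}, noting that the separating maps land in $\symp(m)$, the Lie algebra of the compact group $Sp(m)$, which is exactly why the statement strengthens Corollary~1.7 of~\cite{Hambly10} from arbitrary to compact Lie groups. Your identification of the Cartan development endpoint with the extension $M_E$ evaluated at the signature is the same dictionary the paper uses.
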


\section{Expected signature}\label{sec_expectedSign}

Our main focus in this section is the expected signature of $G$-valued random variables and its connection with the characteristic function defined at the end of Section~\ref{sec_reps}.

\subsection{Moments problem}\label{subsec_momentsProb}

In this section we study the moments problem for $G(\R^d)$-valued random variables, that is, conditions under which a $G(\R^d)$-valued random variable is uniquely determined by its expected signature. We mention here that a large part of the results in this section arose from discussions with Dr. Ni Hao, and we hope to soon jointly expand on this material in a future paper.

When $V$ is a normed space, recall from Corollary~\ref{cor_expExists} that if $X$ is a $G$-valued random variable such that $\ExpSig(X)$ exists and has an infinite radius of convergence, then $\EEE{X}$ exists as an element of $E$ and is equal to $\ExpSig(X)$. Thus $\EEE{f(X)}$ is completely determined by $\ExpSig(X)$ for all $f \in E'$, and in particular for all $M \in \A$. The following is now a consequence of the uniqueness of probability measures from Corollary~\ref{cor_uniqueMeasure}.

\begin{proposition}\label{prop_infRadUnique}
Let $X$ and $Y$ be $G(\R^d)$-valued random variables such that $\ExpSig(X) = \ExpSig(Y)$ and $\ExpSig(X) \in E$, i.e., $\ExpSig(X)$ has an infinite radius of convergence. Then $X \eqd Y$.
\end{proposition}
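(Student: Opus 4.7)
The proof will be essentially a direct combination of Corollary~\ref{cor_expExists} (the identification $\EEE{X} = \ExpSig(X)$ whenever the radius of convergence is infinite) with Corollary~\ref{cor_uniqueMeasure} (measures on $G(\R^d)$ are determined by their values on $\A(\R^d)$). No serious analytic obstacle arises; the key is simply to check that every matrix coefficient of a representation in $\A(\R^d)$ is a continuous linear functional on $E(\R^d)$ and hence may be pulled through the weak expectation.

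First, since $\ExpSig(X) \in E$ by hypothesis and since $\ExpSig(Y) = \ExpSig(X)$, Corollary~\ref{cor_expExists} applies to both $X$ and $Y$. Thus $\EEE{X}$ and $\EEE{Y}$ exist as weak (Gelfand--Pettis) integrals in $E(\R^d)$, and
\[
\EEE{X} = \ExpSig(X) = \ExpSig(Y) = \EEE{Y}.
\]

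Next, fix any $M \in \A(\R^d)$, so $M : E(\R^d) \to \End(H_M)$ is a continuous algebra homomorphism into a finite-dimensional Hilbert space. For any $u,v \in H_M$ the associated matrix coefficient $M_{u,v} \in \CC(\R^d)$ is a continuous linear functional on $E(\R^d)$. By the defining property of the weak integral applied to both $X$ and $Y$,
\[
\EEE{M_{u,v}(X)} = M_{u,v}\bigl(\EEE{X}\bigr) = M_{u,v}\bigl(\EEE{Y}\bigr) = \EEE{M_{u,v}(Y)}.
\]
Letting $u,v$ range over $H_M$, this says $\EEE{M(X)} = \EEE{M(Y)}$ as operators in $\End(H_M)$, for every $M \in \A(\R^d)$.

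Finally, recall from Corollary~\ref{cor_permanence} that $E(\R^d)$ is Polish, hence so is its closed subset $G(\R^d)$, and consequently the laws of $X$ and $Y$ are tight Borel probability measures on $G(\R^d)$. Invoking Corollary~\ref{cor_uniqueMeasure} (the characteristic-function uniqueness statement), the equality $\EEE{M(X)} = \EEE{M(Y)}$ for all $M \in \A(\R^d)$ forces the laws of $X$ and $Y$ to coincide, so $X \eqd Y$. The only step that requires genuine content beyond direct appeals to the cited results is the observation that the weak integrability supplied by Corollary~\ref{cor_expExists} automatically delivers equality of the unitary characteristic functions $\phi_X$ and $\phi_Y$ through continuity of matrix coefficients on $E$.
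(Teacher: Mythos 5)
Your proposal is correct and follows essentially the same route as the paper: identify $\EEE{X}=\ExpSig(X)=\ExpSig(Y)=\EEE{Y}$ via Corollary~\ref{cor_expExists}, pull the matrix coefficients $M_{u,v}\in\CC(\R^d)$ (continuous linear functionals on $E$) through the weak integral, and conclude with Corollary~\ref{cor_uniqueMeasure}. No gaps; the tightness remark is automatic since $G(\R^d)$ is Polish, exactly as the paper uses it.
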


Recall from Corollary~\ref{cor_contD1} that the evaluation map $\II^p_{[0,T]}: \Omega_p \mapsto E$ is continuous. It follows that the signature $\Lyons(\X)_{0,T}$ of any $\Omega_p$-valued (resp. $G\Omega_p$-valued) random variable $\X$ is a well-defined (Borel) $E$-valued (resp. $U$-valued, or $G(\R^d)$-valued in case $V = \R^d$) random variable. 

\begin{example}\label{ex_LevyUnique}
We apply Proposition~\ref{prop_infRadUnique} to the L{\'e}vy–Khintchine formula established in~\cite{FrizShekhar12}.
Recall that every L{\'e}vy process in $\R^d$ admits a natural lift to a $G\Omega_p(\R^d)$-valued random variable $\X$ for any $p > 2$ by adding appropriate adjustments for jumps (see~\cite{Williams01} Section~2). Let $(a,b,K)$ denote the triplet of the L{\'e}vy process.

It follows from~\cite{FrizShekhar12} Section~9.1 that $\ExpSig(X)$ exists (as an element of $P(\R^d) = \prod_{k \geq 0} (\R^d)^{\otimes k}$) whenever the L{\'e}vy measure $K$ has finite moments of all orders. Furthermore, $\ExpSig(X) \in E$ exactly when
\begin{equation}\label{eq_expIntegral}
\int_{\R^d}\left(e^{\lambda \norm{y}} - 1 - \lambda\1_{\norm{y}\leq 1}\norm{y} \right) K(dy) < \infty \; \textnormal{ for all $\lambda > 0$}.
\end{equation}
It follows by Proposition~\ref{prop_infRadUnique} that whenever~\eqref{eq_expIntegral} is satisfied, $\Lyons(\X)_{0,T}$ is uniquely determined as a $G(\R^d)$-valued random variable by its expected signature.
\end{example}

Recall the radius of convergence $r_1(X)$ from Definition~\ref{def_expSig}. Theorem~\ref{thm_r1Pos} below provides sufficient conditions to ensure that $r_1(X) > 0$ or $r_1(X) = \infty$ without explicit knowledge of $\ExpSig(X)$.

For a subset $B \subseteq A$ of an algebra $A$ and $n \geq 1$, define $B^n = \{x_1\ldots x_n \mid x_1,\ldots, x_n \in B\}$. For an element $x \in A$, define $B(x) = \inf\{n \geq 1 \mid x \in B^{n}\}$ (taking $B(x) = \infty$ if $x\notin B^n$ for all $n \geq 1$).

Note that for a topological algebra $A$ with (jointly) continuous multiplication, an $A$-valued random variable $X$, and a (Borel) measurable set $B \subset A$, $B(X)$ is a well-defined random variable in $\{1,2,\ldots\}\cup\{\infty\}$.

\begin{theorem}\label{thm_r1Pos}
Let $V$ be a normed space and $X$ an $E$-valued random variable. Suppose there exists a bounded, measurable set $B \subset E$ such that $B(X)$ has an exponential tail, i.e., $\EEE{e^{\lambda B(X)}} < \infty$ for some $\lambda >0$. Then $r_1(X) > 0$. If moreover $\EEE{e^{\lambda B(X)}} < \infty$ for all $\lambda > 0$, then $r_1(X) = \infty$.
\end{theorem}

\begin{proof}
Equip $E$ with the projective extension of the norm on $V$. For any $r > 0$ and $\lambda > 0$ such that $\sup_{x \in B}\norm{\delta_r (x)} < e^\lambda$, it holds that
\begin{equation}\label{eq_r1Bound}
\sum_{k\geq 0} r^k \EEE{\norm{X^k}} = \EEE{\norm{\delta_r(X)}} \leq \EEE{e^{\lambda B(X)}},
\end{equation}
where the inequality follows from the fact that $\delta_r(X) = \delta_r(X_1)\ldots \delta_r(X_{B(X)})$ for some $X_1,\ldots, X_{B(X)} \in B$.

Suppose first that $\EEE{e^{\lambda B(X)}} < \infty$ for all $\lambda > 0$. For any $r > 0$ let $\lambda > 0$ be sufficiently large such that $\sup_{x \in B}\norm{\delta_r (x)} < e^\lambda$. Then~\eqref{eq_r1Bound} implies that $r_1(X) \geq r$, and thus $r_1(X) = \infty$.

Suppose now that $\EEE{e^{\lambda B(X)}} < \infty$ for some $\lambda > 0$. By Proposition~\ref{prop_contWithStrong}, the functions $\delta_r$ converge strongly to $\delta_0$ as $r \rightarrow 0$ and, in particular, uniformly on $B$. Thus there exists $r > 0$ such that $\sup_{x \in B}\norm{\delta_r (x)} < e^\lambda$. Then~\eqref{eq_r1Bound} implies that $r_1(X) \geq r > 0$ as desired.
\end{proof}

We demonstrate how to apply Theorem~\ref{thm_r1Pos} to random variables arising from signatures of geometric rough paths.

Let $V$ be a Banach space and $p \geq 1$. We note that for any $\x \in \Omega_p$, $\omega_\x(s,t) := \norm{\x}_{\pvar; [s,t]}^p$ defines a control for which~\eqref{eq_factorialBound} is satisfied.
Thus for all $k \geq 0$, the lift $\Lyons(\x) : \Delta_{[0,T]} \mapsto E$ satisfies
\[
\norm{\Lyons(\x)^k_{0,T}} \leq \frac{\omega(0,T)^{k/p}}{\beta_p(k/p)!}.
\]
We hence define
\[
K_p = \left\{ x \in E \mid \sup_{k \geq 0} \beta_p (k/p)! \norm{x^k} \leq 1 \right\}
\]
and observe that $\Lyons(\x)_{0,T} \in K_p$ for every $\x \in \Omega_p$ with $\norm{\x}_{\pvar;[0,T]} \leq 1$. Observe furthermore that $K_p$ is bounded and measurable in $E$.

For $\x \in \Omega_p$, define $k_p(\x) = K_p(\Lyons(\x)_{0,T})$, i.e., the minimum positive integer $k$ for which there exist $x_1,\ldots,x_k \in K_p$ such that $\Lyons(\x)_{0,T} = x_1\ldots x_k$.

We briefly recall the construction of the greedy sequence and function $N_{\kappa, [0,T], p}(\x)$ introduced in~\cite{CassLittererLyons13}. For $\kappa > 0$ define the sequence of times $\tau_0= 0$,
\[
\tau_{j+1} = \inf\{t > \tau_{j} \mid \omega_\x(\tau_{j},t) \geq \kappa\} \wedge T,
\]
so that $\omega_\x(\tau_{j},\tau_{j+1}) = \kappa$ for all $0 \leq j < N = N_{\kappa, [0,T], p}(\x) := \sup\{j \geq 0 \mid \tau_j < T\}$ and $\omega_\x(\tau_{N},\tau_{N+1}) \leq \kappa$ (see \cite{CassLittererLyons13} Definition~4.7,~\cite{FrizHairer14} p.158). Note that $k_p(\x) \leq N_{1, [0,T], p}(\x) + 1$.

\begin{remark}\label{remark_mForLift}
For any $p, q \geq 1$ and $\x \in \Omega_q$, note that the signature $\Lyons(\x)_{0,T}$ exists and so $k_p(\x)$ is meaningfully defined. Moreover, in case $q \leq p$, $\x$ can canonically be viewed as an element of $\Omega_p$ by its lift $S_{\floor p} \x \in \Omega_p$, and we have $\Lyons(\x)_{0,T} = S(S_{\floor p} \x)_{0,T}$.

However, if $q < \floor{p}$ and $N_{1, [0,T], p}(\x)$ and the greedy sequence $(\tau_j)_{j=1}^{\infty}$ are defined in terms of $\x$ (not its lift $\Lyons_{\floor p}\x$), then $N_{1, [0,T], p}(\x)$ does not yield a deterministic bound on $k_p(\x)$ since the individual signatures $\Lyons(\x)_{\tau_j,\tau_{j+1}}$ will in general fail to be elements of $K_p$.

To obtain a bound on $k_p(\x)$, one needs to consider $N_{1, [0,T], p}(S_{\floor p} \x)$ and $(\tau_j)_{j=1}^{\infty}$ defined in terms of $S_{\floor p} \x \in \Omega_p$. Then $\Lyons(\x)_{\tau_j,\tau_{j+1}} = \Lyons(\Lyons_{\floor p} \x)_{\tau_j,\tau_{j+1}} \in K_p$ for all $j=0,1,\ldots$, and so $k_p(\x) \leq N_{1, [0,T], p}(S_{\floor p} \x) + 1$.
\end{remark}

Let $\KK_p(V)$ be the family of $\Omega_p$-valued random variables $\X$ such that $\EEE{e^{\lambda k_p(\X)}} < \infty$ for all $\lambda > 0$.

\begin{cor}\label{cor_KKpRadius}
Let $V$ be a Banach space, $p \geq 1$ and $\X \in \KK_p(V)$. Then $\ExpSig\left[\Lyons(\X)_{0,T}\right]$ has an infinite radius of convergence.
\end{cor}

\begin{cor}\label{cor_KKpUnique}
Let $p \geq 1$ and $\X \in \KK_p(\R^d)$ such that $\X$ is $G\Omega_p(\R^d)$-valued. Then $\Lyons(\X)_{0,T}$ is the unique $G(\R^d)$-valued random variable whose expected signature is $\ExpSig\left[\Lyons(\X)_{0,T}\right]$.
\end{cor}

We now demonstrate two important examples of $G\Omega_p(\R^d)$-valued random variables in $\KK_p(\R^d)$.
Remark that a non-negative random variable $Z$ satisfies $\EEE{e^{\lambda Z}} < \infty$ for all $\lambda > 0$ whenever $Z^{\theta}$ has a Gaussian tail for some $\theta > 1/2$, i.e., $\PPP{Z^\theta > z} \leq C^{-1} e^{-Cz^2}$ for all $z > 0$ and a constant $C > 0$. In both of the following examples $[0,T]$ is a fixed time interval.

\begin{example}[Gaussian rough paths]\label{ex_GaussianRPs}
Recall that every centred continuous Gaussian process in $\R^d$ with independent components and covariance matrix of finite 2D $\rho$-variation, $\rho \in [1,2)$, admits a natural lift to a $G\Omega_p(\R^d)$-valued random variable $\X$ for any $p > 2\rho$ (\cite{FrizVictoir10} Theorem~15.33).

We recall here several results from~\cite{CassLittererLyons13}. Particularly, in the case that $\rho \in [1,3/2)$ it holds that $N_{1, [0,T], p}(\X)^{1/\rho}$ has a Gaussian tail. Moreover, in the special case that $\X$ is the natural lift of fractional Brownian motion with Hurst parameter $H > 1/4$ for some $p > H^{-1}$ (and indeed the lift exists for any $p > H^{-1}$), it holds that $N_{1, [0,T], p}(\X)^{1/2 + 1/p}$ has a Gaussian tail (\cite{CassLittererLyons13} Corollary~5.5 and Theorem~6.3,~\cite{FrizHairer14} Theorem~11.13).

Since $k_p(\X) \leq N_{1, [0,T], p}(\X) + 1$ as remarked before, it follows that $\X$ is a $G\Omega_p(\R^d)$-valued random variable in $\KK_p(\R^d)$ in both of the above cases.
\end{example}

\begin{example}[Markovian rough paths]\label{ex_MarkovRPs}
Consider $V = \R^d$, $n \geq 1$, and $\g = \g^n(\R^d)$, which for convenience we identify with the Lie group $G^n(\R^d)$ via the exponential map. Let $\X = \X^{a,x}$ be a Markovian rough path constructed from a Dirichlet form $\EE^a$ on $L^2(\g)$ for $a \in \Xi^{n, d}(\Lambda)$, $\Lambda \geq 1$, and starting point $\X^{a,x}_0 = x \in \g$ (taking the natural lift when $n=1$, see~\cite{FrizVictoir10} Chapter~16 for definitions). The sample paths of this process are almost surely geometric $p$-rough paths for any $p > 2$.

A recent result of Cass and Ogrodnik (\cite{CassOgrodnik15} Theorem~5.3) implies that $N_{1, [0,T], p}(\X)^{1-1/p}$
has a Gaussian tail for any $p > 2$ (moreover the constant determining the tail bounds depends only on $\Lambda, p, n, d$ and $T$). It follows that $\X$ is a $G\Omega_p(\R^d)$-valued random variable in $\KK_p(\R^d)$ for all $p > 2$.
\end{example}

\subsection{Analyticity}\label{subsec_analyticity}

In this section we investigate conditions under which the characteristic function is analytic. We apply these results to situations where the expected signature does not necessarily have an infinite radius of convergence.

\begin{definition}
Let $X$ be an $E$-valued random variable, $H$ a finite dimensional Hilbert space, and $M \in \LLL(V, \LLL(H))$. For $\lambda \in \C$, define $\phi_{X, M}(\lambda) = \EEE{(\lambda M)(X)}$ whenever $\norm{(\lambda M)(X)}$ is integrable.
\end{definition}

The above definition of $\phi_{X, M}$ does not introduce any new concept to the previously defined $\phi_X$ and simply makes the results in this section easier to state.

Recall that for a real random variable $X$, if $\EEE{|e^{\lambda X}|} < \infty$ for all $\lambda \in (-\varepsilon,\varepsilon)$, then $\phi_X(\lambda) = \EEE{e^{i\lambda X}}$ is well-defined and analytic on the strip $|\operatorname{Im}(z)| < \varepsilon$. This property is known as the propagation of regularity (and similar results hold for $C^{2k}$ regularity of $\phi_X$ on $\R$, see, e.g.,~\cite{Lukacs70}).

We start by showing that the analogue of this property is not in general true for $G$-valued random variables whenever $\dim(V) \geq 2$. The propagation of regularity for real (or equivalently $G(\R)$-valued) random variables relies crucially on commutativity in $E(\R)$, and we show how the lack of commutativity prevents the same phenomenon from occurring when $\dim(V) \geq 2$. Recall the radius of convergence $r_1(X)$ from Definition~\ref{def_expSig}.

\begin{example}\label{ex_noPropag}
Let $V$ be a normed space with $\dim(V) \geq 2$. We construct a $G$-valued random variable $X$ such that
\begin{enumerate}[label={(\arabic*)}]
\item \label{point_radPos} $r_1(X) > 0$, thus in particular, $\ExpSig(X)$ exists and $\phi_{X,M}$ is analytic in a neighbourhood of zero for all $M \in \A$,
\item \label{point_dense} there exists $M \in \LLL(V, \uu(\C^2))$ such that the set of $\lambda \in \C$ for which $|\lambda| > 1$ and $\EEE{\norm{(\lambda M)(X)}} = \infty$ forms a dense subset of $\{z \in \C \mid |z| > 1\}$, and
\item \label{point_nowhereDiff} $\phi_{X,M}$ is nowhere differentiable on $(1,\infty)$.
\end{enumerate}

Let $e_1, e_2$ be fixed linearly independent vectors in $V$ of unit length, $s$ a non-negative real random variable, and $N$ a random variable in $\N = \{0,1,2,\ldots\}$ independent of $s$. Define the $G$-valued random variable $X = \exp(s f_N)$, where $f_N = [e_1,[\ldots,[e_1,e_2]\ldots]$ with $e_1$ appearing $N$ times.

Suppose there exists $r_0 > 0$ such that that $\EEE{e^{r s}} < \infty$ for all $0 \leq r < r_0$. We claim this implies~\ref{point_radPos}. Indeed, remark that $\norm{f_n} \leq 2^n$, and thus $\norm{\delta_r X} \leq \exp(2^N r^N s)$. Denote $p_n = \PPP{N = n}$. It follows that for $r > 0$ sufficiently small 
\[
\EEE{\norm{\delta_r X}}
\leq \sum_{n \geq 0} p_n \EEE{\exp(2^n r^n s)}
< \infty,
\]
which implies $r_1(X) > 0$ as claimed.

Let $\su(2)$ be the special unitary Lie algebra of dimension $3$ with the standard basis $u_1,u_2,u_3$ satisfying $[u_1,u_2]=u_3$, $[u_2,u_3] = u_1$, $[u_3,u_1] = u_2$. Let $M : V \mapsto \su(2)$ defined by $e_i \mapsto u_i$ for $i=1,2$ and arbitrary otherwise.

Suppose moreover that $\EEE{e^{r s}} = \infty$ for all $r > r_0$ and that $N$ has unbounded support. We claim this implies~\ref{point_dense}. Indeed, let $v_n = M(f_n)$ (thus $v_0 = u_2, v_1 = u_3, v_2 = -u_2, v_3 = -u_3, v_4 = v_0, \ldots$). Denote $\lambda = re^{i\theta}$ for $r, \theta \geq 0$, so that $(\lambda M)(X) = \exp(s r^Ne^{iN\theta} v_N)$. We obtain
\begin{equation*}
\begin{aligned}
\EEE{\norm{(\lambda M)(X)}}
& = \sum_{n \geq 1} p_n \EEE{\norm{\exp(s r^n e^{in\theta} v_n)}} \\
& = \sum_{n \geq 1} p_n \EEE{\exp\left(\frac{1}{2}|s r^n\sin(n\theta)|\right)},
\end{aligned}
\end{equation*}
where the last equality follows since $i v_n$ is Hermitian with eigenvalues $\pm \frac{1}{2}$.

Let $D$ be any open subset of $\{z \in \C \mid |z| > 1\}$. We observe that there exists $n \geq 1$ sufficiently large and $r > 1, \theta > 0$, such that $p_n > 0, re^{i\theta} \in D$ and $\EEE{\exp\left(\frac{1}{2}|s r^n\sin(n\theta)|\right)} = \infty$. Thus~\ref{point_dense} holds as claimed.

Finally, we make specific choices for $s$ and $N$ to obtain~\ref{point_nowhereDiff}. Observe by Fubini's theorem that for all $r \geq 0$
\[
\EEE{(r M)(X)} = \sum_{n \geq 1} p_n \EEE{\exp(s r^n v_n)}.
\]
Suppose $p_n > 0$ only if $n = 4m$ for some integer $m$. Since
\[
\exp(t u_3)
= \left( \begin{array}{cc} e^{it/2} & 0  \\ 0 & e^{-it/2} \end{array} \right),
\]
it follows that 
\begin{equation*}
\EEE{(r M)(X)}
= \sum_{n \geq 0} p_{4n} \left( \begin{array}{cc} \exp\left(i r^{4n} s /2\right) & 0  \\ 0 & \exp\left(-i r^{4n} s/2\right) \end{array} \right).
\end{equation*}
It follows that $\phi_{X,M}$ has the same regularity at $r > 0$ as $\sum_{n \geq 0}p_{4n}\phi_s(r^{4n}/2)$, where $\phi_s$ is the characteristic function of $s$.

It is now easy to find $\phi_s$ and $p_n$ such that the above series defines a nowhere differentiable function on $(1,\infty)$. For example, let $\phi_s(\lambda) = (1-q)(1-qe^{i\lambda})^{-1}$ for any $0<q<1$, i.e., $s$ is geometrically distributed with parameter $1-q$, and let $p_n$ decay faster than any geometric sequence, i.e., for any $\alpha \in (0,1)$ there exists $n_\alpha$ such that $p_n < \alpha^n$ for all $n \geq n_\alpha$. The statement of~\ref{point_nowhereDiff} then follows by Dini's general construction of a nowhere differentiable function (\cite{Knopp18} p.24).
\end{example} 

\begin{remark}
The random variable $X$ constructed above is the exponential of a Lie polynomial of degree $N$. Thus when $V = \R^d$, $X$ is the signature of a random weakly geometric $N$-rough path (\cite{FrizVictoir10} Exercise~9.17), and thus of a random geometric $p$-rough path for $p > N$. Moreover, as the decay of $\norm{X^k}$ is exactly of the order $(k/N)!^{-1}$, there does not exist a fixed $p \geq 1$ such that $X$ is almost surely the signature of a random geometric $p$-rough path.

One can however approximate each sample of $X$ by the signature $\Lyons(\X)_{0,T}$ of a bounded variation path $\X_{0,\cdot} : [0,T] \mapsto \R^d$ in such as way that~\ref{point_radPos} and~\ref{point_dense} in Example~\ref{ex_noPropag} still hold for the $G(\R^d)$-valued random variable $\Lyons(\X)_{0,T}$ with the change that the stated $\lambda$ in~\ref{point_dense} will be dense in the annulus $\{z \in \C \mid 1 < |z| < R\}$ for any fixed $R > 1$ (where the random variable $\X$ depends on $R$).
\end{remark}

\begin{definition}\label{def_phi}
Let $V$ be a normed space. Denote by $\Phi(V)$ the set of $G$-valued random variables $X$ which satisfy
\begin{enumerate}[label={(P\arabic*)}]
\item \label{point_pos} $r_1(X) > 0$, and
\item \label{point_hol} $\phi_{X,M}$ is (weakly) analytic on $\R$ for all $M \in \A$.
\end{enumerate}
\end{definition}

The importance of the set $\Phi$ is that when $V = \R^d$ and $X,Y \in \Phi(\R^d)$ such that $\ExpSig(X) = \ExpSig(Y)$, we have $X \eqd Y$. To observe this, remark that for $V$ normed and $X$ an $E$-valued random variable with $r_1(X) =: \varepsilon > 0$, it follows by dominated convergence that $\EEE{M(X)} = \sum_{k\geq 0} M^{\otimes k}\EEE{X^k}$ whenever $\norm{M} < \varepsilon$. Hence for all $M \in \A$, $\phi_{X,M}(\lambda)$ is completely determined by $\ExpSig(X)$ whenever $|\lambda|$ is sufficiently small, and the claim follows by Corollary~\ref{cor_uniqueMeasure}.

Theorem~\ref{thm_charFuncAnalytic} is the main result of this section and provides a criterion to ensure that $X \in \Phi$.

\begin{theorem}\label{thm_charFuncAnalytic}
Let $V$ be a normed space and $X$ a $U$-valued random variable. Suppose there exists a bounded, measurable set $B \subset U$ such that $B(X)$ has an exponential tail, i.e., $\EEE{e^{\lambda B(X)}} < \infty$ for some $\lambda >0$. Then \ref{point_pos} and~\ref{point_hol} hold for $X$.
\end{theorem}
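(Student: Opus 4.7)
My strategy rests on two pillars: submultiplicativity of the seminorms $\exp(r\normc)$ on $E$, and the observation that every $b \in U$ has $\norm{b^0} = 1$. The latter follows from $b\,\alpha(b) = 1$ combined with the fact that the antipode acts as the identity on degree zero, forcing $(b^0)^2 = 1$. Let $\kappa > 0$ denote the exponent from the hypothesis, so $\EEE{e^{\kappa B(X)}} < \infty$, and set $C_r := \sup_{b \in B} \exp(r\normc)(b) = \sup_{b \in B} \sum_k r^k \norm{b^k}$. Boundedness of $B$ in $E$ makes $C_r$ finite for every $r > 0$, and the elementary estimate $\sum_{k \geq 1} r^k \norm{b^k} \leq (r/r_0) \sum_{k \geq 1} r_0^k \norm{b^k}$ for $r \leq r_0$ shows $C_r \to 1$ as $r \to 0^+$. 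Submultiplicativity yields $\exp(r\normc)(y) \leq C_r^n$ for $y \in B^n$, hence $\norm{y^k} \leq r^{-k} C_r^n$; applying this to $X \in B^{B(X)}$ gives $\norm{X^k} \leq r^{-k} C_r^{B(X)}$ almost surely. Selecting $r$ small enough that $\log C_r < \kappa$ makes $\EEE{C_r^{B(X)}}$ finite, and consequently $\EEE{\norm{X^k}} \leq r^{-k} \EEE{C_r^{B(X)}}$, proving $r_1(X) \geq r > 0$, i.e.,~\ref{point_pos}.

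For~\ref{point_hol}, fix $M \in \A$ and $\lambda_0 \in \R$. Since $\lambda_0 M \in \A$, the operator $(\lambda_0 M)(b)$ is unitary on $H_M$ for every $b \in U$, so $\norm{(\lambda_0 M)(b)} = 1$. For a complex perturbation $\zeta$ and $\lambda := \lambda_0 + \zeta$,
\[
(\lambda M)(b) - (\lambda_0 M)(b) = \sum_{k \geq 1} (\lambda^k - \lambda_0^k)\, M^{\otimes k}(b^k),
\]
and the inequality $|\lambda^k - \lambda_0^k| \leq |\zeta|\,k(|\lambda_0| + |\zeta|)^{k-1}$ bounds this in norm by $|\zeta|\, H_b(|\lambda_0| + |\zeta|)$, where $H_b(s) := \sum_{k \geq 1} k s^{k-1} \norm{M}^k \norm{b^k}$. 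Recognising $H_b(s)$ as the derivative at $s$ of the entire function $\zeta \mapsto \sum_k \zeta^k \norm{M}^k \norm{b^k}$ and applying a Cauchy estimate on the unit circle about $s$ yields $H_b(s) \leq C_{(s+1)\norm{M}}$ uniformly in $b \in B$, so $H(s) := \sup_{b \in B} H_b(s) < \infty$ for every $s \geq 0$. This gives the key perturbative bound
\[
\norm{(\lambda M)(b)} \leq 1 + |\zeta|\, H(|\lambda_0| + |\zeta|), \quad b \in B,
\]
and multiplicativity of $(\lambda M)$ applied to $X = b_1 \cdots b_{B(X)}$ yields
\[
\norm{(\lambda M)(X)} \leq \exp\bigl(|\zeta|\, H(|\lambda_0| + |\zeta|)\, B(X)\bigr).
\]

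Choose $\eta > 0$ so that $\eta H(|\lambda_0| + \eta) < \kappa$. Then $\EEE{\norm{(\lambda M)(X)}}$ is uniformly bounded over the complex disc $|\lambda - \lambda_0| \leq \eta$. For each $\omega$, the series $\lambda \mapsto (\lambda M)(X(\omega)) = \sum_k \lambda^k M^{\otimes k}(X(\omega)^k)$ is entire in $\lambda$, so dominated convergence shows that every matrix coefficient $\lambda \mapsto \EEE{\gen{(\lambda M)(X) u, v}}$ is holomorphic on the open disc $|\lambda - \lambda_0| < \eta$. Since $\lambda_0 \in \R$ was arbitrary, $\phi_{X,M}$ is weakly analytic on $\R$, and together with~\ref{point_pos} this yields $X \in \Phi$ when $X$ is $G$-valued (using $G \subseteq U$). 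The main obstacle is that the naive estimate $\norm{(\lambda M)(b)} \leq C_{|\lambda|\norm{M}}$ grows with $|\lambda|$ and leaves $\EEE{C_{|\lambda|\norm{M}}^{B(X)}}$ non-integrable for large $|\lambda|$; the perturbative bound above is essential precisely because it leverages unitarity of $(\lambda_0 M)(b)$ to make the integrability condition depend only on the local parameter $|\lambda - \lambda_0|$ rather than on $|\lambda|$ itself.
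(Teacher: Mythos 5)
Your proof is correct and follows essentially the same route as the paper: you establish~\ref{point_pos} via submultiplicativity of the dilated norms together with the exponential tails of $B(X)$, and~\ref{point_hol} by exploiting unitarity of $(\lambda_0 M)$ on $U$ so that a small complex perturbation of $\lambda$ costs at most $(1+\varepsilon)^{B(X)}$, which the tails make integrable --- exactly the content the paper packages into Proposition~\ref{prop_contWithStrong}, Lemma~\ref{lem_integrableFromM} and Lemma~\ref{lem_weakHol}. The only cosmetic differences are that you work at an arbitrary $\lambda_0\in\R$ (the paper argues near $\lambda=1$, implicitly replacing $M$ by $\lambda_0 M$) and you conclude holomorphy from the uniform integrable domination, a step that should be finished with Morera/Fubini or differentiation under the integral rather than bare dominated convergence, where the paper instead uses a barycenter in the Banach space of bounded analytic functions.
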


\begin{proof}[Proof of Theorem~\ref{thm_charFuncAnalytic},~\ref{point_pos}]
This follows immediately from Theorem~\ref{thm_r1Pos}.
\end{proof}

For the proof of~\ref{point_hol}, we require the following two lemmas.

\begin{lem}\label{lem_integrableFromM}
Let $F$ be a topological algebra, $A$ a normed algebra, $M\in\Hom(F,A)$, $B \subset F$ a bounded measurable set, and let $c = \sup_{x \in B}\norm{M(x)}$. Let $X$ be an $F$-valued random variable such that $\norm{M(X)}$ and $(c+\varepsilon)^{B(X)}$ are integrable for some $\varepsilon > 0$.

Then $\sup_{M' \in \UU}\norm{M'(X)}$ is an integrable random variable, where
\[
\UU = \{M' \in \Hom(F,A) \mid \sup_{x \in B}\norm{M(x)-M'(x)} < \varepsilon\}
\]
is an open subset of $\Hom(F,A)$ (under the strong topology).
\end{lem}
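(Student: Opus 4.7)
The plan is to dominate $\sup_{M' \in \UU}\|M'(X)\|$ pointwise by the integrable random variable $(c+\varepsilon)^{B(X)}$, and to deduce Borel measurability of the supremum from the fact that it is a pointwise supremum of continuous functions on $F$, hence lower semicontinuous. No clever measure-theoretic reduction to a countable subfamily of $\UU$ should be needed.

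First I would record that $\UU$ is indeed open in the strong topology on $\Hom(F,A)$ (which, consistent with Proposition~\ref{prop_contWithStrong}, I take to be the topology of uniform convergence on bounded subsets of $F$): since $B$ is bounded, the condition $\sup_{x \in B}\|M(x) - M'(x)\| < \varepsilon$ is by definition a basic open neighborhood of $M$. The substantive step is the following estimate. For every $M' \in \UU$ and $x \in B$, the triangle inequality gives
$$\|M'(x)\| \leq \|M(x)\| + \|M(x)-M'(x)\| < c + \varepsilon.$$
If $X \in F$ satisfies $B(X) = n < \infty$, one can write $X = x_1 \cdots x_n$ with each $x_i \in B$; using that $M'$ is an algebra homomorphism and that the norm on $A$ is sub-multiplicative,
$$\|M'(X)\| = \|M'(x_1)\cdots M'(x_n)\| \leq \prod_{i=1}^{n} \|M'(x_i)\| \leq (c+\varepsilon)^{B(X)}.$$
Taking the supremum over $M' \in \UU$ yields $\sup_{M' \in \UU}\|M'(X)\| \leq (c+\varepsilon)^{B(X)}$ whenever $B(X) < \infty$; the integrability of $(c+\varepsilon)^{B(X)}$ rules out the event $B(X) = \infty$ in the only nontrivial case $c+\varepsilon > 1$, so the bound holds almost surely in any case.

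To finish I would verify that $X \mapsto \sup_{M' \in \UU}\|M'(X)\|$ is Borel measurable. For each individual $M' \in \Hom(F,A)$ the map $x \mapsto \|M'(x)\|$ is continuous on $F$ by continuity of $M'$; since a pointwise supremum of any family of continuous real-valued functions is lower semicontinuous, $x \mapsto \sup_{M' \in \UU}\|M'(x)\|$ is lower semicontinuous on $F$, hence Borel. Combining this with the pointwise domination by the integrable $(c+\varepsilon)^{B(X)}$ yields the conclusion. I do not expect a serious obstacle; if anything, the only tempting pitfall is trying to replace $\UU$ by a countable dense subfamily (which would require separability assumptions not present in the hypotheses), and the lower-semicontinuity observation sidesteps this entirely.
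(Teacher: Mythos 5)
Your proposal is correct in substance and follows essentially the same route as the paper: openness of $\UU$ comes from $\sup_{x\in B}\norm{\cdot(x)}$ being a semi-norm defining the strong topology, measurability of $x\mapsto\sup_{M'\in\UU}\norm{M'(x)}$ comes from lower semi-continuity of a supremum of continuous functions, and the domination comes from decomposing $X=x_1\cdots x_n$ with $x_i\in B$, $n=B(X)$. The only real difference is the final estimate: you bound $\norm{M'(X)}\leq\prod_i\norm{M'(x_i)}\leq(c+\varepsilon)^{B(X)}$ directly by sub-multiplicativity, whereas the paper applies~\eqref{eq_prodBound} to bound the difference, giving $\norm{M'(X)}\leq\norm{M(X)}+(c+\varepsilon)^{B(X)}-c^{B(X)}$; your cruder bound is equally sufficient here and incidentally shows that the hypothesis that $\norm{M(X)}$ be integrable is not needed for the conclusion once $B(X)<\infty$ almost surely. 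One caveat: your parenthetical claim that ``the bound holds almost surely in any case'' is not justified when $c+\varepsilon\leq 1$, since then integrability of $(c+\varepsilon)^{B(X)}$ does not rule out the event $\{B(X)=\infty\}$, on which neither your estimate nor the paper's applies (and on which the conclusion can genuinely fail for degenerate choices of $B$). This is an implicit gap in the lemma's literal statement rather than in your argument specifically --- the paper's own proof is equally silent, and in the intended application $B\subset U$ gives $c=1$, so $c+\varepsilon>1$ and $B(X)<\infty$ a.s. --- but it would be cleaner to state explicitly that you work on $\{B(X)<\infty\}$, which is a full-measure event whenever $c+\varepsilon>1$.
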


\begin{proof}
By definition of the strong topology, $\sup_{x \in B}\norm{\cdot(x)}$ is a semi-norm on $\LLL(F,A)$ and so $\UU$ is indeed an open subset.

Moreover $x \mapsto \sup_{M' \in \UU}\norm{M'(x)}$ is the supremum of a family of continuous functions, thus lower semi-continuous, and thus measurable. The claim now follows by a direct application of~\eqref{eq_prodBound}.
\end{proof}

For a bounded complex domain $D \subset \C$, denote by $H_b(\overline D)$ the space of continuous functions on $\overline D$ which are analytic on $D$. Recall that $H_b(\overline D)$ equipped with the uniform norm is a separable Banach space.

\begin{lem}\label{lem_weakHol}
Let $V$ be a normed space, $A$ a separable Banach algebra and $M \in \LLL(V,A)$. Let $X$ be an $E$-valued random variable. Assume that for a bounded domain $D \subset \C$, $\sup_{\lambda \in \overline D}\norm{(\lambda M)(X)}$ is an integrable random variable. Then for every $f \in A'$, the map $\lambda \mapsto \EEE{\gen{f,(\lambda M)(X)}}$ is in $H_b(\overline D)$.
\end{lem}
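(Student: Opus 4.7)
The plan is to reduce weak analyticity of $\lambda \mapsto \EEE{(\lambda M)(X)}$ to a scalar Morera-type argument carried out under the expectation. First I would record that for each fixed $x \in E$ the map $\lambda \mapsto (\lambda M)(x) = \sum_{k \geq 0} \lambda^k M^{\otimes k}(x^k)$ is an entire $A$-valued function: by Corollary~\ref{cor_radiusOfConv} applied to the fundamental family $(n\normc)_{n \geq 1}$ on $V$, the scalar series $\sum_k \norm{M}^k \norm{x^k} \mu^k$ converges for every $\mu \geq 0$, so the series defining $(\lambda M)(x)$ is absolutely convergent in $A$ for every $\lambda \in \C$. Consequently $\omega \mapsto (\lambda M)(X(\omega))$ is the norm-limit of continuous partial sums (which are Borel measurable because each $\omega \mapsto X^k(\omega)$ is), so with $A$ separable it is a well-defined $A$-valued random variable, and the hypothesis gives $\norm{f((\lambda M)(X))} \leq \norm{f}\,\sup_{\lambda \in \overline D}\norm{(\lambda M)(X)} \in L^1$ for every $f \in A'$, so the weak integral $F(\lambda) := \EEE{(\lambda M)(X)}$ exists on $\overline D$.

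Next I would establish that $\lambda \mapsto f(F(\lambda))$ is continuous on $\overline D$ for every $f \in A'$: the integrand $f((\lambda M)(X))$ is pointwise continuous in $\lambda$ by Step~1 and uniformly dominated by the $L^1$ bound above, so dominated convergence applies. For analyticity, fix $f \in A'$ and any closed rectifiable contour $\Gamma \subset D$ bounding a region contained in $D$. For each $\omega$, Cauchy's theorem applied to the entire function $\lambda \mapsto f((\lambda M)(X(\omega)))$ gives $\oint_\Gamma f((\lambda M)(X(\omega)))\,d\lambda = 0$. The double integrand is bounded in absolute value by $|\Gamma|\,\norm{f}\,\sup_{\lambda \in \overline D}\norm{(\lambda M)(X)}$, which is integrable on the product space, so Fubini--Tonelli yields
\[
\oint_\Gamma f(F(\lambda))\,d\lambda
= \EEE{\oint_\Gamma f((\lambda M)(X))\,d\lambda}
= 0.
\]
Combined with continuity on $\overline D$, Morera's theorem gives that $f \circ F$ is holomorphic on $D$. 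Since $f \in A'$ is arbitrary, $F$ is weakly analytic on $D$.

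The only real subtleties are measurability and the Fubini swap; both are resolved by the same uniform $L^1$ dominant $\sup_{\lambda \in \overline D}\norm{(\lambda M)(X)}$ (together with separability of $A$ to ensure that $(\lambda M)(X)$ is a genuine random variable and that its weak integral is well defined). Everything else is a routine transfer of the classical "differentiation of the characteristic function" argument from $\C$-valued to $A$-valued random variables.
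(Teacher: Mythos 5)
Your proof is correct, but it takes a genuinely different route from the paper's. You argue scalarly: fix $f \in A'$, observe that $\lambda \mapsto \gen{f,(\lambda M)(X(\omega))}$ is entire for each sample (since $x \in E$ forces infinite radius of convergence of $\sum_k \lambda^k M^{\otimes k}(x^k)$), dominate everything by the integrable random variable $\sup_{\lambda\in\overline D}\norm{(\lambda M)(X)}$, and then combine dominated convergence (continuity of $f\circ F$), Cauchy's theorem plus Fubini (vanishing contour integrals), and Morera to conclude that $f\circ F$ is holomorphic; separability of $A$ enters only to make $(\lambda M)(X)$ a genuine norm-integrable, hence weakly integrable, random variable so that $F(\lambda)=\EEE{(\lambda M)(X)}$ exists and $f(F(\lambda)) = \EEE{\gen{f,(\lambda M)(X)}}$. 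The paper instead works in a function space: it shows that the linear map $\phi_{M,f} : E \mapsto H(\C)$, $x \mapsto (\lambda \mapsto \gen{f,(\lambda M)(x)})$, is bounded (hence continuous, $E$ being Fr{\'e}chet and thus bornological), regards $\phi_{M,f}(X)\mid_{\overline D}$ as a norm-integrable random variable in the separable Banach space $H_b(\overline D)$ of bounded analytic functions on $\overline D$, and identifies its barycenter $h$ with $\lambda \mapsto \gen{f,\EEE{(\lambda M)(X)}}$ through the evaluation functionals, so analyticity is automatic. Your route is more elementary, avoiding the bornological continuity argument and vector-valued integration in $H_b(\overline D)$, at the cost of having to justify the Fubini swap (joint measurability is the one point you gloss over, but it is routine since the integrand is continuous in $\lambda$ and measurable in $\omega$, and the uniform $L^1$ dominant handles integrability on the product). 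One cosmetic remark: you conclude holomorphy of $f \circ F$ on the open set $D$ together with continuity on $\overline D$; this is precisely the sense in which the paper's barycenter $h \in H_b(\overline D)$ is ``analytic on $\overline D$'', and it is all that is needed in the application to Theorem~\ref{thm_charFuncAnalytic}.
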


\begin{proof}
Let $f \in A'$. For $x \in E$ consider the map $\phi_{M,f}(x) : \lambda \mapsto \gen{f, (\lambda M)(x)}$, which is an entire function on $\C$.

We claim that the corresponding linear map $\phi_{M,f} : E \mapsto H(\C)$, where $H(\C)$ is the space of entire functions on $\C$, is bounded when we equip $H(\C)$ with the compact-open topology. Indeed, since $\lambda \mapsto (\lambda M)$ is a continuous map from $\C$ into $\Hom(E,A)$ by Proposition~\ref{prop_contWithStrong}, the collection of maps $(\lambda M)_{\lambda \in K}$ is strongly bounded in $\Hom(E,A)$ for any bounded set $K \subset \C$.
Thus for every bounded set $L \subset E$, it holds that
\[
\sup_{x \in L} \sup_{\lambda \in K} \norm{(\lambda M)(x)} < \infty.
\]
In particular, this implies that $\phi_{M, f}(L)$ is a bounded subset of $H(\C)$ for every bounded set $L \subset E$ as claimed.

Since $E$ is a Fr{\'e}chet space (hence bornological), it follows moreover that $\phi_{M,f}: E \mapsto H(\C)$ is continuous.
Hence $\phi_{M,f}(X)|_{\overline D}$ is a norm-integrable $H_b(\overline D)$-valued random variable and thus possesses a barycenter $h \in H_b(\overline D)$.

Let $\lambda \in \overline D$. Since the evaluation map $\gen{\cdot, \lambda} : x \mapsto x(\lambda)$ is in the continuous dual of $H_b(\overline D)$, it follows that
\[
h(\lambda) = \EEE{\gen{\phi_{M,f}(X), \lambda}} = \EEE{\gen{f, (\lambda M)(X)}} = \gen{f, \EEE{(\lambda M)(X)}},
\]
where the last equality follows since $(\lambda M)(X)$ is a norm-integrable $A$-valued random variable and is thus weakly integrable by the separability of $A$. As $h$ is in $H_b(\overline D)$, the conclusion follows.
\end{proof}

\begin{proof}[Proof of Theorem~\ref{thm_charFuncAnalytic},~\ref{point_hol}]
Let $M \in \A$. Since $\norm{M(g)} = 1$ for all $g \in U$, one obtains from Proposition~\ref{prop_contWithStrong} and Lemma~\ref{lem_integrableFromM} that there exists a domain $D$ containing $1 \in \C$ such that $\sup_{\lambda \in \overline D} \norm{(\lambda M)(X)}$ is an integrable random variable. The conclusion now follows by applying Lemma~\ref{lem_weakHol}.
\end{proof}

Following the discussion at the end of Section~\ref{subsec_momentsProb}, define
\[
N_p = K_p \cap U = \left\{ g \in U \mid \sup_{k \geq 0} \beta_p (k/p)! \norm{g^k} \leq 1 \right\}.
\]
We observe that $\Lyons(\x)_{0,T} \in N_p$ for every $\x \in G\Omega_p$ with $\norm{\x}_{\pvar;[0,T]} \leq 1$. As with $K_p$, $N_p$ is bounded and measurable in $E$.

For $\x \in G\Omega_p$, define $n_p(\x) = N_p(\Lyons(\x)_{0,T})$, i.e., the minimum positive integer $n$ for which there exist $g_1,\ldots,g_n \in N_p$ such that $\Lyons(\x)_{0,T} = g_1\ldots g_n$. Recall the functions $k_p$ and $N_{1, [0,T], p}$ from Section~\ref{subsec_momentsProb} and note that $k_p(\x) \leq n_p(\x) \leq N_{1, [0,T], p}(\x) + 1$.

\begin{remark}\label{remark_nForLift}
As in Remark~\ref{remark_mForLift}, we mention again that for $1 \leq q \leq p$, every $\x \in G\Omega_q$ is canonically defined as an element of $G\Omega_p$ via its lift $S_{\floor p} \x \in G\Omega_p$. However one cannot bound $n_p(\x)$ in terms $N_{1, [0,T], p}(\x)$ computed directly in terms of $\x$; instead one has $n_p(\x) \leq N_{1, [0,T], p}(S_{\floor p} \x) + 1$.
\end{remark}

Let $\NN_p(V)$ be the family of $G\Omega_p$-valued random variables $\X$ such that $n_p(\X)$ has an exponential tail. Note that if $\X \in \KK_p$ and is $G\Omega_p$-valued, then $\X \in \NN_p$.

\begin{cor}\label{cor_controlOnMp}
Let $V$ be a Banach space. Then for all $p \geq 1$ and $\X \in \NN_p$, the signature $\Lyons(\X)_{0,T}$ is a $U$-valued random variable satisfying~\ref{point_pos} and~\ref{point_hol}.
\end{cor}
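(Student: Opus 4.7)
The plan is to deduce the corollary as a direct application of Theorem~\ref{thm_charFuncAnalytic} with the set $B_p$ playing the role of $B$. The bulk of the argument is simply to verify that the hypotheses of that theorem are met by the random variable $X := \Lyons(\x)_{0,T}$ for $\x \in \NN_p$.

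First, I would check that $X$ is indeed a $U$-valued random variable. Measurability is clear since by Corollary~\ref{cor_contD1} the map $\II^p_{[0,T]} : G\Omega_p \mapsto E$ is continuous (hence Borel measurable), so $X = \II^p_{[0,T]}(\x)$ is an $E$-valued random variable; and the fact that $X \in S_p \subset U$ almost surely was established in Section~\ref{sec_signature}.

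Next, I would verify that $B_p$ is a bounded, measurable subset of $U$. Measurability is immediate: since each projection $\rho^k$ is continuous on $E$ and $U$ is measurable (when $E$ is metrizable, as noted just before Theorem~\ref{thm_charFuncAnalytic}), $B_p = U \cap \bigcap_{k \geq 0}\{g \in E \mid \beta_p (k/p)!\norm{g^k}\leq 1\}$ is a countable intersection of measurable sets. For boundedness, let $\gamma$ be the norm on $V$. For every $n \geq 1$ and $g \in B_p$ we have
\[
\exp(n\gamma)(g) = \sum_{k \geq 0} n^k \norm{g^k} \leq \beta_p^{-1}\sum_{k \geq 0} \frac{n^k}{(k/p)!} < \infty,
\]
uniformly in $g \in B_p$. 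Since $(n\gamma)_{n \geq 1}$ is a fundamental family of semi-norms on $V$, Proposition~\ref{prop_topEqual} implies that $(\exp(n\gamma))_{n \geq 1}$ is a fundamental family on $E$, so this uniform bound establishes that $B_p$ is bounded in $E$.

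Finally, by the definition of $\NN_p$, $B_p(X) = n_p(\x)$ has exponential tails. Thus all the hypotheses of Theorem~\ref{thm_charFuncAnalytic} are in place, and we conclude at once that $X = \Lyons(\x)_{0,T}$ satisfies \ref{point_pos} and \ref{point_hol}. No step presents a real obstacle; the only mild care needed is in the boundedness calculation for $B_p$, which rests on the factorial decay built into the definition together with Proposition~\ref{prop_topEqual}.
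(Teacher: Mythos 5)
Your proposal is correct and follows the paper's own route: Corollary~\ref{cor_controlOnMp} is stated there as a direct consequence of Theorem~\ref{thm_charFuncAnalytic} applied with $B = B_p$, whose boundedness and measurability the paper merely asserts and you verify explicitly (your factorial-decay bound via $\exp(n\gamma)$ and Proposition~\ref{prop_topEqual} is exactly the right justification). No gaps.
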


In the finite dimensional setting, we obtain a result analogous to Corollary~\ref{cor_KKpUnique} but with weaker assumptions and a weaker conclusion.

\begin{cor}\label{cor_uniqueInPhi}
Let $p \geq 1$ and $\X \in \NN_p(\R^d)$. Then $\Lyons(\X)_{0,T} \in \Phi(\R^d)$. In particular, $\Lyons(\X)_{0,T}$ is the unique $G(\R^d)$-valued random variable in $\Phi(\R^d)$ whose expected signature is $\ExpSig\left[\Lyons(\X)_{0,T}\right]$.
\end{cor}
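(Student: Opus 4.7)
The statement is essentially a packaging of two ingredients already built up in the text: Corollary~\ref{cor_controlOnMp}, which puts $\Lyons(\x)_{0,T}$ into $\Phi(\R^d)$, and the uniqueness principle for $\Phi(\R^d)$ sketched in the paragraph after Definition~\ref{def_phi}. The plan is simply to assemble these explicitly.

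First I would check membership in $\Phi(\R^d)$. Since $\x \in G\Omega_p(\R^d)$, its signature $X := \Lyons(\x)_{0,T}$ lies in $S_p(\R^d) \subset G(\R^d)$, so $X$ is a genuine $G(\R^d)$-valued random variable. The hypothesis $\x \in \NN_p(\R^d)$ says exactly that $n_p(\x) = B_p(\Lyons(\x)_{0,T})$ has exponential tails, so Theorem~\ref{thm_charFuncAnalytic} applied through Corollary~\ref{cor_controlOnMp} gives both \ref{point_pos} and \ref{point_hol} for $X$. Hence $X \in \Phi(\R^d)$.

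Next I would prove uniqueness. Suppose $Y \in \Phi(\R^d)$ has $\ExpSig(Y) = \ExpSig(X)$. Fix $M \in \A(\R^d)$ and set $\varepsilon = \min(r_1(X), r_1(Y)) > 0$. For $|\lambda| < \varepsilon$, dominated convergence (as observed in the paragraph following Definition~\ref{def_phi}) yields
\[
\phi_{X,M}(\lambda) = \sum_{k \geq 0} M^{\otimes k}(\EEE{X^k}) \lambda^k = \sum_{k \geq 0} M^{\otimes k}(\EEE{Y^k})\lambda^k = \phi_{Y,M}(\lambda),
\]
where the middle equality uses $\ExpSig(X) = \ExpSig(Y)$. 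By property \ref{point_hol} both $\phi_{X,M}$ and $\phi_{Y,M}$ are weakly analytic on $\R$, so the identity theorem applied componentwise on the finite-dimensional Hilbert space on which $M$ acts forces them to agree on all of $\R$. In particular $\EEE{M(X)} = \phi_{X,M}(1) = \phi_{Y,M}(1) = \EEE{M(Y)}$ for every $M \in \A(\R^d)$. Corollary~\ref{cor_uniqueMeasure} then gives $X \eqd Y$.

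There is no real obstacle here; the proof is essentially bookkeeping. The only subtlety worth stating cleanly is the passage from the power series identity on the disc $\{|\lambda|<\varepsilon\}$ to equality at $\lambda = 1$, which relies crucially on \ref{point_hol} being global analyticity on all of $\R$ — without that, the characteristic values $\EEE{M(X)} = \phi_{X,M}(1)$ could not be recovered from the expected signature alone.
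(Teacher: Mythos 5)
Your argument is correct and is exactly the paper's intended proof: membership of $\Lyons(\x)_{0,T}$ in $\Phi(\R^d)$ via Corollary~\ref{cor_controlOnMp}, and uniqueness via the observation following Definition~\ref{def_phi} (power series agreement near $0$ by dominated convergence, extension to $\lambda=1$ by analyticity on $\R$, then Corollary~\ref{cor_uniqueMeasure}); the paper states the corollary without a separate proof precisely because it is this assembly. The only cosmetic point is that the dominated-convergence expansion is valid for $|\lambda|\,\norm{M}<\varepsilon$ rather than $|\lambda|<\varepsilon$, which changes nothing in the argument.
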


\begin{remark}
For a random variable $X \in \Phi(\R^d)$, we cannot exclude the possibility that there exists a $G(\R^d)$-valued random variable $Y$ (which might arise as the signature of a geometric rough path) such that $Y \notin \Phi(\R^d)$ and $\ExpSig(X) = \ExpSig(Y)$. Whether this is possible currently remains unknown.

However, note that Corollaries~\ref{cor_controlOnMp} and~\ref{cor_uniqueInPhi} apply for all $p \geq 1$.
Thus for any $p, q \geq 1$ and random geometric rough paths $\X \in \NN_p(\R^d)$ and $\Y \in G\Omega_q(\R^d)$, if $\Lyons(\X)_{0,T}$ and $\Lyons(\Y)_{0,T}$ are not equal in law (as $G(\R^d)$-valued random variables) and $\ExpSig(\Lyons(\X)_{0,T}) = \ExpSig(\Lyons(\Y)_{0,T})$, then the lift $S_{\floor{q'}}\Y$ cannot be in $\NN_{q'}(\R^d)$ for any $q' \geq q$.
\end{remark}

\begin{example}[Markovian rough paths stopped upon exiting a domain]
Recall the notation of Example~\ref{ex_MarkovRPs} and the result of Cass and Ogrodnik~\cite{CassOgrodnik15} that $N_{1, [0,1], p}(\X^{a,x})^{1-1/p}$ has a Gaussian tail for any $p > 2$. 

In this example we shall replace the interval $[0,1]$ by $[0,T]$, where $T$ is the first exit time of $\X^{a,x}$ from a suitable set. In particular, we shall show that $N_{\kappa, [0,T], p}(\X^{a,x})$ has an exponential tail and that this result is asymptotically sharp.

Throughout the example we fix $\Lambda \geq 1$ and $\g = \g^n(\R^d)$. We first give a slight extension of the support theorem~\cite{FrizVictoir10} Theorem~16.33 in the H{\"o}lder topology. Recall the Sobolev path space $W^{1,2}_x([s,t], \g)$ with starting point $x \in \g$.
In particular, recall that for all $\hh \in W^{1,2}_x([s,t], \g)$ and $\alpha \in [0,1/2]$
\[
\norm{\hh}_{\aHol;[s,t]} \leq (t-s)^{1/2-\alpha}\norm{\hh}_{W^{1,2};[s,t]}.
\]
For $\theta > 0$ consider the ball
\[
W_{\theta;x} := \{\hh \in W^{1,2}_x([0,1], \g) \mid \norm{\hh}_{W^{1,2};[0,1]} < \theta\}.
\]

\begin{lem}\label{lem_MarkovianSupport}
For any $\alpha \in [0,1/4)$, $\theta > 0$ and $c > 0$, there exists $\delta > 0$ such that
\[
\mathbb{P}^{a,x}\left[d_{\aHol;[0,1]}(\X, \hh) < c \right] > \delta
\]
for all $a \in \Xi^{n, d}(\Lambda)$, starting points $x \in \g$, and $\hh \in W_{\theta;x}$.
\end{lem}

%\begin{remark}
%In the case that $\X$ is Brownian motion in $\g = \R^d$, the analogue of Lemma~\ref{lem_MarkovianSupport} with $d_{\aHol;[0,1]}$ replaced by $d_{\infty;[0,1]}$ follows directly from the Cameron-Martin theorem.
%\end{remark}

The proof is essentially the same as that in~\cite{FrizVictoir10} and we defer it to the end of the example.
Recall now the greedy sequence $(\tau_j)_{j=1}^{\infty}$ associated with $N_{\kappa, [0,T], p}(\X^{a,x})$. For ease of notation, we shall not stop $\tau_j$ at $T$ for $j > N := N_{\kappa, [0,T], p}(\X^{a,x})$ (i.e., we do not necessarily have $\tau_{N+1} = T$). Note this causes no confusion since $\X^{a,x}_t$ is defined for all times $t \geq 0$ as a diffusion on $\g$.

Consider first $\X^{a,x} : [0,1] \mapsto \g$. Taking $\hh \equiv x$ the trivial path, Lemma~\ref{lem_MarkovianSupport} implies that for any $p > 4$ and $\kappa > 0$, there exists $\delta > 0$ such that
\[
\inf_{x \in \g}\mathbb{P}^{a,x}\left[\norm{\X}_{\pHol;[0,1]} < \kappa\right] \geq \delta.
\]
It follows that
\[
\inf_{x \in \g} \mathbb{P}^{a,x}\left[\tau_1 > 1 \right] \geq \inf_{x \in \g}\mathbb{P}^{a,x}\left[\norm{\X}_{\pHol;[0,1]} < \kappa\right] \geq \delta,
\]
so by the (strong) Markov property of $\X^{a,x}$ and properties of conditional expectation
\[
\mathbb{P}^{a,x}\left[ N_{\kappa, [0,1], p}(\X) \geq k \right] = \mathbb{P}^{a,x}\left[\tau_k < 1\right] \leq (1-\delta)^k.
\]
That is, $N_{\kappa, [0,1], p}(\X^{a,x})$ has an exponential tail (moreover $\delta$ does not depend on $x \in \g$ or $a \in \Xi^{n,d}(\Lambda)$).

While this argument yields a strictly weaker asymptotic bound than that in~\cite{CassOgrodnik15}, the advantage is that by choosing appropriate $\hh$ in Lemma~\ref{lem_MarkovianSupport}, a very similar argument gives upper and lower bounds on the tail of $N_{\kappa, [0,T], p}(\X^{a,x})$, where $T$ is now the first exit time of $\X^{a,x}$ from a suitable open set.
We first show the lower bound.

Recall that $\g$ is equipped with the (left-invariant) metric $d$ induced by the Carnot-Carath{\'e}odory norm (or any other symmetric sub-additive homogeneous norm). For any $r > 0$ and $x \in \g$, define $B_r(x) = \{y \in \g\mid d(x,y) \leq r\}$.

\begin{proposition}\label{prop_MarkovExitLower}
Let $p > 4$, $\kappa, r > 0$. Define $T = \inf\{t > 0 \mid \X^{a,x}_t \notin B_{r}(x)\}$ the first exit time of $\X^{a,x}$ from $B_{r}(x)$. Then there exists $\delta > 0$ such that
\[
\mathbb{P}^{a,x}\left[N_{\kappa, [0,T], p}(\X) \geq k\right] \geq \delta^k
\]
for all $a \in \Xi^{n, d}(\Lambda)$ and $x \in \g$.
\end{proposition}

\begin{proof}
Let $\theta > 0$ sufficiently large such for all $x \in \g$ and $y \in B_{r/2}(x)$ there exists $\hh^y \in W_{\theta; y}$ such that $\hh^y_1 = x$, $\hh^y_t \in B_{r/2}(x)$ for all $t \in [0,1]$, and $\norm{\hh^y}_{\pvar;[0,1]} > \kappa + r/2$ (for example, take $t \mapsto \hh^y_t$ as a geodesic from $y$ to $x$ on $[0,1/2]$ and then $\hh^y_{t} = x\hh_{2t-1}$ for $t \in [1/2,1]$ for a fixed $\hh \in W^{1,2}_0([0,1],\g)$ with $\hh_1 = 0$, $\hh_t \in B_{r/2}(0)$ for all $t \in [0,1]$, and $\norm{\hh}_{\pvar;[0,1]} > \kappa + r/2$). 

Since $\norm{\X}_{\pvar;[0,1]} \geq \norm{\hh^y}_{\pvar;[0,1]} - d_{\pHol;[0,1]}(\X,\hh^y)$, we have for all $x \in \g$, $y \in B_{r/2}(x)$ and $a \in \Xi^{n, d}(\Lambda)$ 
\begin{multline*}
\mathbb{P}^{a,y}\left[\X_t \in B_r(x) \textnormal{ for all } t \in [0,1] , \norm{\X}_{\pvar;[0,1]} > \kappa, \X_1 \in B_{r/2}(x)\right] \\
\geq \mathbb{P}^{a,y}\left[d_{\pHol}(\X, \hh^y) < r/2 \right].
\end{multline*}
Applying Lemma~\ref{lem_MarkovianSupport} with $c = r/2$ and $\alpha = 1/p$, along with the (weak) Markov property and conditional expectation, concludes the proof.
\end{proof}

\begin{remark}
Note that Proposition~\ref{prop_MarkovExitLower} deals only with the quantity $N_{\kappa, [0,T], p}(\X^{a,x})$ and does not provide a lower bound on the tail of $n_p(\X^{a,x}_{[0,T]})$. In particular, one cannot conclude that $\ExpSig[\Lyons(\X^{a,x})_{0,T}]$ does not have an infinite radius of convergence.
\end{remark}

We now show an upper bound on the tail of $N_{\kappa, [0,T], p}(\X^{a,x})$ which will imply that $S(\X^{a,x})_{0,T} \in \Phi(\R^d)$ (see however Remark~\ref{remark_diffusionLift}). For a subset $D \subset \g$, consider the following property:
\begin{equation}\label{eq_domainCond}
\textnormal{There exist $r, c > 0$ such that $\sup_{h \in B_r(0)}\inf_{y \in D} d(xh,y) > c$ for all $x \in D$}.
\end{equation}

\begin{remark}\label{remark_domainTrunc}
For $1 \leq k \leq n$, let $\pi^k : \g^n(\R^d) \mapsto \g^k(\R^d)$ denote the projection. Then whenever the image $\pi^k(D)$ satisfies~\eqref{eq_domainCond} for some $1 \leq k \leq n$ (for the respective metric on $\g^k(\R^d)$), then so does $D$ (with a different choice of $r,c$).

Indeed, on the one hand $d(x,y) \geq d(\pi^k(x), \pi^k(y))$ for all $x,y \in \g^n(\R^d)$. On the other hand, for every $r > 0$, there exists $R > 0$ such that $B_r(0) \subset \pi^k(B_R(0)) \subset \g^k(\R^d)$. The conclusion readily follows since $\pi^k$ is a group homomorphism.
\end{remark}

\begin{proposition}\label{prop_MarkovExitUpper}
Let $p > 4$, $\kappa > 0$, and $D \subset \g$ be an open set satisfying~\eqref{eq_domainCond} for some $r,c > 0$. Define $T = \inf\{t > 0 \mid \X^{a,x}_t \notin D\}$ the first exit time of $\X^{a,x}$ from $D$. Then there exists $\delta > 0$ such that
\[
\mathbb{P}^{a,x}\left[N_{\kappa, [0,T], p}(\X) \geq k\right] \leq (1-\delta)^k
\]
for all $a \in \Xi^{n, d}(\Lambda)$ and $x \in D$.
\end{proposition}

\begin{proof}
Let $\theta > 0$ be sufficiently large such that for every $h \in B_r(0)$ there exists $\hh \in W_{\theta;0}$ such that $\hh_1 = h$.
Note that it suffices to prove the statement for any fixed $\kappa > 0$. In particular, we may assume that $\kappa > \theta + c$.

It follows that to every point $x \in D$, we can assign $h^x \in B_r(x)$ and $\hh^x \in W_{\theta;x}$ such that $\inf_{y \in D} d(h^x,y) > c$ and $\hh^x_1 = h^x$. Then for all $a \in \Xi^{n, d}(\Lambda)$ and $x \in D$
\begin{align*}
\mathbb{P}^{a, x}\left[\tau_1 > 1 \geq T\right]
&\geq \mathbb{P}^{a,x}\left[\norm{\X}_{\pvar;[0,1]} < \theta + c, \X_1 \notin D \right] \\
&\geq \mathbb{P}^{a,x}\left[\norm{\X}_{\pHol;[0,1]} < \theta + c, d(\X_1, h^x) < c\right] \\
&\geq \mathbb{P}^{a,x}\left[d_{\pHol;[0,1]}(\X, \hh^x) < c\right].
\end{align*}
Applying Lemma~\ref{lem_MarkovianSupport} with $\alpha = 1/p$, along with the (strong) Markov property and conditional expectation, concludes the proof.
\end{proof}

\begin{remark}\label{remark_diffusionLift}
The diffusion $\X^{a,x}$ is constructed on the space $\g^n = \g^n(\R^d)$ (or equivalently on $G^n(\R^d)$), and Proposition~\ref{prop_MarkovExitUpper} gives an exponential bound on the tail of $N_{1, [0,T], p}(\X^{a,x})$ computed in terms of $\X^{a,x}$ for any $p > 4$. Fixing $4 < p < 5$, Corollary~\ref{cor_uniqueInPhi} thus implies that $\Lyons(\X^{a,x})_{0,T} \in \Phi(\R^d)$ for $n \geq 4$.

One could extend this to the case $n=2$ or $3$ (recall for $n=1$ we consider the diffusion $\X^{a\circ \pi^1, x}$ on $\g^2$) if the analogue of Lemma~\ref{lem_MarkovianSupport} were true for all $\alpha \in [0,1/2)$. However such a support theorem is currently unknown.

Nonetheless, in light of Remarks~\ref{remark_mForLift} and~\ref{remark_nForLift}, for $n = 2$ or $3$ we can still show that $\Lyons(\X^{a,x})_{0,T} \in \Phi(\R^d)$ by showing that $N_{1, [0,T], p}(\Lyons_{4} \X^{a,x})$ has an exponential tail.

To show this, note we can apply Proposition~\ref{prop_MarkovExitUpper} to the diffusion $\X^{a\circ \pi^n, y}$ on $\g^4$ and the open set $(\pi^n)^{-1}(D) \subset \g^4$ (which indeed satisfies~\eqref{eq_domainCond} due to Remark~\ref{remark_domainTrunc}). We thus obtain that $N_{1, [0,\widetilde T], p}(\X^{a\circ\pi^n, y})$ has an exponential tail, where $\widetilde T$ is the first exit time of $\pi^n \X^{a \circ\pi^n, y}$ from $D$.

To conclude that $N_{1, [0,T], p}(\Lyons_{4} \X^{a,x})$ has an exponential tail, it suffices to show that $\Y^y_\cdot := y * S_4 \X^{a, \pi^n y}_{0,\cdot}$ is equal in law to $\X^{a\circ \pi^n, y}_{\cdot}$ for all $y \in \g^4$ as processes on $\g^4$ ($*$ denoting group multiplication in $\g^4$).
This follows by a similar argument as~\cite{FrizVictoir08} Section~6: observe that the Markov process $\Y^y_t$ is the solution of an RDE with starting point $y \in \g^4$ and driven by $\pi^2(\X^{a, \pi^n y}_t)$ (which is non-Markov in general) along the (unbounded) canonical left-invariant vector fields $U_1,\ldots, U_d$ on $\g^4$. Denoting by $P_t$ the semi-group on $C_b(\g^4)$ of $\Y^y_t$, it suffices to show that $\lim_{t \rightarrow 0} \gen{t^{-1}(f - P_t f), g}_{L^2(\g^4)} = \EE^{a\circ\pi^n}(f,g)$ for all $f,g \in C^\infty_c(\g^4)$.

Consider $f,g \in C^\infty_c(\g^4)$ with support in $B_R(0) \subset \g^4$ and fix smooth vector fields $U_i^R$ which agree with $U_i$ on $B_{2R}(0)$ and have compact support. Let $\Y^{R,y}_t$ denote the RDE driven by $\pi_2(\X^{a,\pi^n(y)}_t)$ along $U_i^R$ starting at $\Y^{R,y}_0 = y$.
For all $y \in B_R(0)$ and $t \in [0,1]$, we have $\Y^{R,y}_t = \Y^y_t$ whenever $\Y^y_s \in B_{2R}(0)$ for all $s \in [0,t]$, whilst the probability that $\Y^y_s$ leaves $B_{2R}(0)$ in $[0,t]$ is bounded above by $C^{-1}\exp(-C t^{-2/p})$ for any $2< p < 3$ and some $C = C(R,p)$ (which follows from Fernique estimates on $\norm{\X^{a,x}}_{\pHol;[0,1]}$).

Defining $P_t^Rf(y) := \EEE{f(\Y^{R,y}_t)}$, it follows readily that $\lim_{t \rightarrow 0}\gen{t^{-1}(f - P_t f), g} = \lim_{t \rightarrow 0}\gen{t^{-1}(f - P_t^R f), g}$. Finally, the latter limit is now seen equal to $\EE^{a\circ\pi^n}(f,g)$ following~\cite{FrizVictoir08} Lemmas~26,~27 and the proof of Proposition~28 (note that one readily extends Lemma~27 to diffusions on $\g^n$ for $n > 2$, cf.~\cite{FrizVictoir10} Proposition~16.20).
\end{remark}

\begin{proof}[Proof of Lemma~\ref{lem_MarkovianSupport}]
We mimic the proofs of~\cite{FrizVictoir10} Lemma~16.32 and Theorem~16.33 while keeping track of constants.

For $\alpha \in [0,1/4)$, $\hh \in W^{1,2}_x([0,1], \g)$ and $\varepsilon > 0$ define the set
\[
B^{\hh}_{\varepsilon; \alpha} = \{\x \in C^{\aHol}_x([0,1],\g) \mid \norm{\x}_{\aHol} \leq 2\norm{\hh}_{\aHol} + 1, d_{\infty}(\x,\hh) \leq \varepsilon\}.
\]
We claim that for all $\alpha \in [0,1/4)$ and $\varepsilon > 0$, there exists $\delta > 0$ such that
\[
\mathbb{P}^{a,x}\left[\X \in B_{\varepsilon; \alpha}^{\hh} \right] > \delta
\]
for all $a \in \Xi^{n, d}(\Lambda)$, $x \in \g$, and $\hh \in W_{\theta;x}$.

Indeed, we follow the proof~\cite{FrizVictoir10} Lemma~16.32 (we also mention here that, directly as stated,~\cite{FrizVictoir10} Lemma~16.32 contains the minor error that it fails to hold for the trivial path $\norm{\hh}_{\aHol} = 0$; this is readily fixed by modifying the definition of their $B^{\hh}_\varepsilon$ to our definition above; moreover the proof of~\cite{FrizVictoir10} Theorem~16.33 then goes through unchanged).

Using Step 1 of the proof of~\cite{FrizVictoir10} Lemma~16.32, we obtain that for any $\beta \in (\alpha, 1/2)$, $\mathbb{P}^{a,x}\left[\X \in B^\hh_{\varepsilon; \alpha}\right] \geq \Delta_1 - \Delta_2$,
where $\Delta_1 = \mathbb{P}^{a,x}\left[d_\infty(\X, \hh) \leq \varepsilon\right]$
and
\[
\Delta_2 = \mathbb{P}^{a,x}\left[\norm{\X}_{\betaHol} > (\norm{\hh}_{\aHol} + 1)^{\beta/\alpha}(2\varepsilon)^{1-\beta/\alpha}\right].
\]
The claim will follow once we show that $\Delta_2/\Delta_1 \rightarrow 0$ as $\varepsilon \rightarrow 0$ uniformly over $a \in \Xi^{n, d}(\Lambda)$, $x \in \g$, and $\hh \in W_{\theta;x}$.

By~\cite{FrizVictoir10} Theorem~E.21, we have $\log(\Delta_1) \geq -c_1\varepsilon^{-2}$ where $c_1 = C (1+\norm{\hh}_{W^{1,2}})^2$ and $C$ is a constant depending only on the doubling and Poincar{\'e} constants of $\EE^a$, which in turn depend only on $\Lambda, n$ and $d$ (\cite{FrizVictoir10} Proposition~16.5 and Theorem~E.8).

On the other hand, the Fernique estimate in~\cite{FrizVictoir10} Corollary~16.12 implies that
\[
\log(\Delta_2) \leq -c_2(\norm{\hh}_{\aHol} + 1)^{2\beta/\alpha} \varepsilon^{2-2\beta/\alpha} \leq -c_2\varepsilon^{2-2\beta/\alpha},
\]
where $c_2$ depends only on $\beta$ and $\Lambda$. So for fixed $\alpha \in [0,1/4)$, choose any $\beta \in (2\alpha,1/2)$. Since $2-2\beta/\alpha < -2$, we see $\Delta_2/\Delta_1 \rightarrow 0$ as $\varepsilon \rightarrow 0$ uniformly over the desired variables, which proves the claim.

To conclude, we follow the proof of~\cite{FrizVictoir10} Theorem~16.33. By the $d_0/d_\infty$ estimate on $\g$ (\cite{FrizVictoir10} Proposition~8.15),
\[
d_0(\x,\hh) \leq Cd_\infty(\x,\hh) + Cd_\infty(\x,\hh)^{1/n}(\norm{\x}_\infty + \norm{\hh}_\infty)^{1-1/n},
\]
where $C = C(n,d)$, and so by interpolation (\cite{FrizVictoir10} Lemma~8.16) we have for all $x\in \g$, $\x, \hh \in C^{\aHol}_x([0,1],\g)$ and $0 \leq \alpha' < \alpha < 1/4$ that 
\begin{align*}
d_{\aprimeHol}(\x,\hh)
\leq& (\norm{\x}_{\aHol} + \norm{\hh}_{\aHol})^{\alpha'/\alpha} d_0(\x,\hh)^{1-\alpha'/\alpha} \\
\leq& C^{1-\alpha'/\alpha}(\norm{\x}_{\aHol} + \norm{\hh}_{\aHol})^{\alpha'/\alpha} \\
& \left[d_\infty(\x,\hh) + d_\infty(\x,\hh)^{1/n}(\norm{\x}_\infty + \norm{\hh}_\infty)^{1-1/n} \right]^{1-\alpha'/\alpha}.
\end{align*}
Since $\norm{\hh}_{\infty} \leq \norm{\hh}_{\aHol} \leq \norm{\hh}_{W^{1,2}}$, it follows for all $\hh \in W_{\theta;x}$ and $\x \in B^{\hh}_{\varepsilon; \alpha}$ that
\[
d_{\aprimeHol}(\x,\hh) \leq c_3(\varepsilon + \varepsilon^{1/n})^{1-\alpha'/\alpha},
\]
where $c_3$ depends only on $n, d, \alpha, \alpha'$ and $\theta$.

Choosing $\varepsilon > 0$ so that $c_3(\varepsilon + \varepsilon^{1/n})^{1-\alpha'/\alpha} < c$, it follows that there exists $\delta > 0$ such that
\[
\mathbb{P}^{a,x}\left[d_{\aprimeHol}(\X, \hh) < c \right] \geq \mathbb{P}^{a,x}\left[\X \in B^{\hh}_{\varepsilon;\alpha} < c \right] > \delta
\]
for all $a \in \Xi^{n, d}(\Lambda)$, $x \in \g$, and $\hh \in W_{\theta;x}$, which concludes the proof.
\end{proof}
\end{example}

\subsection{Convergence of measures}\label{subsec_convMeas}

We conclude the paper with a result analogous to the method of moments for weak convergence of $G(\R^d)$-valued random variables. We work first with a slightly general notion of coproduct spaces as this is the only structure of $E$ which we require.

\begin{definition}
A \emph{coproduct space} $(F, \Delta)$ is a locally convex space $F$ and a continuous linear map $\Delta : F \mapsto F^{\widehat\otimes 2}$, with the additional property that $G(F) := \{g \in F \mid \Delta(g) = g\otimes g, g \neq 0\}$ is closed in $F$.
Let $\PP_r(F)$ be the set of (weakly) integrable probability measures on $G(F)$.
\end{definition}

The extra condition that $G(F)$ is closed in $F$ will only arise in Lemma~\ref{lem_weakConv} to ensure that $G(F)$ is Polish whenever $F$ is. Remark that~\eqref{eq_squareBound} remains true for any coproduct space $(F, \Delta)$, $f \in F'$, and $\mu \in \PP(F)$ with support on $G(F)$.

\begin{lem}\label{lem_qDomp}
Let $(F, \Delta)$ be a nuclear coproduct space and $\gamma$ a semi-norm on $F$. There exists a semi-norm $\xi$ on $F$ such that $\mu(\gamma) \leq \sqrt{\xi(\mu^*)}$ for all $\mu \in \PP_r(F)$.
\end{lem}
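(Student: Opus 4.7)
The plan is to reduce the semi-norm estimate to a family of linear-functional estimates via nuclearity of $F$, apply the basic bound \eqref{eq_squareBound} pointwise, and sum up with two uses of Cauchy--Schwarz. Throughout I exploit the following observation: for any $f\in F'$, the functional $f^{\otimes 2}\circ\Delta$ lies in $F'$ (by continuity of $\Delta$ together with the universal property of the projective tensor product), so \eqref{eq_squareBound} and the defining property of the barycenter together give $\mu(|f|)\leq \sqrt{(f^{\otimes 2}\circ\Delta)(\mu^*)}$ for every $\mu\in\PP_r(F)$ and $f\in F'$.

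First I would use nuclearity of $F$ to dominate $\gamma$ by an absolutely convergent series of absolute values of linear functionals. Choose a continuous semi-norm $\gamma'\geq \gamma$ on $F$ such that the canonical linking map $\widehat F_{\gamma'}\to \widehat F_\gamma$ is nuclear, and unpack its standard representation. Composing with the canonical map $F\to \widehat F_{\gamma'}$ furnishes $f_n\in F'$ with $|f_n|\leq \gamma'$ on $F$ and scalars $a_n\geq 0$ with $C:=\sum_n a_n<\infty$ such that
\[
\gamma(x)\leq \sum_{n\geq 1} a_n\, |f_n(x)|, \qquad x\in F.
\]
Integrating this bound against $\mu$, exchanging sum and integral by monotone convergence, applying the square-root bound to each $\mu(|f_n|)$, and then invoking Cauchy--Schwarz on the outer sum yields
\begin{align*}
\mu(\gamma)
&\leq \sum_{n} a_n\,\mu(|f_n|)
\leq \sum_{n} a_n\sqrt{(f_n^{\otimes 2}\circ\Delta)(\mu^*)}\\
&\leq \sqrt{\,C \sum_{n} a_n\,(f_n^{\otimes 2}\circ\Delta)(\mu^*)\,}.
\end{align*}

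This suggests defining $\xi(y):=C \sum_{n\geq 1} a_n\, |f_n^{\otimes 2}(\Delta y)|$ for $y\in F$. Since $(f_n^{\otimes 2}\circ\Delta)(\mu^*)=\mu(f_n^2)\geq 0$, the absolute values are vacuous at $y=\mu^*$, so $\mu(\gamma)^2\leq \xi(\mu^*)$ follows from the display above. To see that $\xi$ is a continuous semi-norm on $F$: each summand $|f_n^{\otimes 2}\circ\Delta|$ is a semi-norm, and the estimate $\|f_n^{\otimes 2}\|_{(\gamma'\otimes\gamma')^*}\leq \|f_n\|_{\gamma'^*}^2\leq 1$ combined with continuity of $\Delta:F\to F^{\widehat\otimes 2}$ gives $\xi\leq C^2\eta$ for the continuous semi-norm $\eta(y):=(\gamma'\otimes\gamma')(\Delta y)$ on $F$. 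The crux of the argument is the nuclearity step: it converts $\gamma$ into a form to which the tensor estimate of \eqref{eq_squareBound} applies term by term, and without uniform domination $|f_n|\leq \gamma'$ together with absolutely summable weights $(a_n)$, the resulting $\xi$ could not be bounded by a continuous semi-norm on $F$.
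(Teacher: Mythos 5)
Your proof is correct and follows essentially the same route as the paper: nuclearity of the linking map $\widehat F_{\gamma'}\to\widehat F_\gamma$ gives a summable decomposition $\gamma\leq\sum_n a_n|f_n|$, the bound \eqref{eq_squareBound} is applied termwise via the barycenter, and the resulting series is dominated by a continuous semi-norm built from $(\gamma'\otimes\gamma')\circ\Delta$. The only difference is cosmetic: the paper absorbs the weights into the $f_n$ and simply takes any $\xi\geq(\zeta^{\otimes 2})\circ\Delta$, whereas you keep the weights explicit and close the estimate with Cauchy--Schwarz.
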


\begin{proof}
Let $\zeta$ be a semi-norm on $F$ such that the canonical map $\widehat F_\zeta \mapsto \widehat F_\gamma$ is nuclear.
Increasing $\zeta$ by a scalar multiple if necessary, it follows that there exist $(f_n)_{n \geq 1} \in F'$ such that $\sum_{n \geq 1}\zeta(f_n) \leq 1$ and $\gamma \leq \sum_{n \geq 1}|f_n|$. The conclusion then follows from~\eqref{eq_squareBound} for any semi-norm $\xi$ on $F$ such that $\xi \geq (\zeta^{\otimes 2}) \circ \Delta$.
\end{proof}

\begin{lem}\label{lem_muCompact}
Let $(F, \Delta)$ be a Fr{\'e}chet nuclear coproduct space. Let $R \subseteq \PP_r(F)$ be a family of probability measures on $G(F)$ such that $(\mu^*)_{\mu \in R}$ is bounded. Then $R$ is uniformly tight.
\end{lem}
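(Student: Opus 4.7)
The plan is to exhibit, for every $\varepsilon > 0$, a compact subset $K \subseteq G(F)$ with $\mu(K) \geq 1 - \varepsilon$ uniformly over $R$. The construction proceeds by controlling a countable fundamental system of seminorms on $F$, then invoking the Fréchet-nuclear hypothesis to upgrade boundedness to relative compactness.

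First, since $F$ is Fréchet, its topology is generated by some countable family of seminorms $(\gamma_n)_{n \geq 1}$. Applying Lemma~\ref{lem_qDomp} to each $\gamma_n$ yields a seminorm $\xi_n$ on $F$ such that $\mu(\gamma_n) \leq \sqrt{\xi_n(\mu^*)}$ for every $\mu \in \PP_r(F)$. The hypothesis that $(\mu^*)_{\mu \in R}$ is bounded in $F$ means $C_n := \sup_{\mu \in R} \xi_n(\mu^*) < \infty$ for every $n$, so $\sup_{\mu \in R}\mu(\gamma_n) \leq \sqrt{C_n}$. Markov's inequality then gives, for any threshold $M_n > 0$,
\[
\mu\{x \in G(F) \mid \gamma_n(x) > M_n\} \leq \frac{\sqrt{C_n}}{M_n}, \quad \forall\mu \in R.
\]
Choosing $M_n$ so that $\sqrt{C_n}/M_n < \varepsilon/2^n$ and setting $K = \bigcap_{n \geq 1}\{x \in F \mid \gamma_n(x) \leq M_n\} \cap G(F)$, a union bound yields $\mu(F \setminus K) \leq \sum_{n \geq 1} \varepsilon/2^n = \varepsilon$.

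The final step is to verify compactness of $K$. The set $\bigcap_n\{\gamma_n \leq M_n\}$ is closed in $F$ by continuity of the $\gamma_n$, and it is bounded (its image under every $\gamma_n$ is contained in $[0, M_n]$, and every continuous seminorm on $F$ is dominated by some $\gamma_n$ up to scalar). Because $F$ is a Fréchet nuclear space, it is Montel, so every closed bounded subset is compact; thus $\bigcap_n\{\gamma_n \leq M_n\}$ is compact in $F$. Since $G(F)$ is closed in $F$ by hypothesis on the coproduct space, the intersection $K$ is a closed subset of a compact set, hence compact in $G(F)$. Combined with the estimate above, this establishes uniform tightness.

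The only subtle point is the appeal to the Montel property: nuclearity alone is not enough (we need completeness to conclude compactness, not just precompactness), and being Fréchet alone is not enough (we need bounded closed sets to be compact). The interaction of both hypotheses, together with Lemma~\ref{lem_qDomp} which transfers barycenter control to seminorm control of the measure, is the essential mechanism.
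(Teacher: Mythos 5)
Your proof is correct and follows essentially the same route as the paper: apply Lemma~\ref{lem_qDomp} to a countable generating family of seminorms, use Markov's inequality with suitably large thresholds, and conclude compactness of the resulting closed bounded set from the fact that a Fr{\'e}chet nuclear space is Montel (the paper cites the same relative-compactness fact for bounded sets). No gaps worth noting.
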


\begin{proof}
Let $(\gamma_n)_{n \geq 1}$ be a defining non-decreasing sequence of semi-norms on $F$. By Lemma~\ref{lem_qDomp} there exists a sequence of semi-norms $(\xi_n)_{n \geq 1}$ on $F$ such that $\mu(\gamma_n) \leq \sqrt{\xi_n(\mu^*)}$ for all $\mu \in \PP_r(F)$. Since $(\mu^*)_{\mu \in R}$ is bounded, $\sup_{\mu \in R} \xi_n(\mu^*) < \infty$ for every $n \geq 1$. 

Let $B_n = \{x \in F \mid \gamma_n(x) < 1\}$. For any sequence of positive reals $(\lambda_n)_{n \geq 1}$, the set $K = \bigcap_{n \geq 1} \lambda_n B_n$ is bounded in $H$ and thus relatively compact (\cite{Treves67} p.520).
For all $\mu \in \PP_r(F)$ we have that
\[
\mu(K^c)
\leq \sum_{n \geq 1}\mu(\{x \mid \gamma_n(x)
\geq \lambda_n\})
\leq \sum_{n \geq 1} \lambda_n^{-1} \mu(\gamma_n)
\leq \sum_{n \geq 1} \lambda_n^{-1} \sqrt{\xi_n(\mu^*)}.
\]
Taking $\lambda_n$ sufficiently large, it follows that $\sup_{\mu \in R} \mu(K^c)$ can be made arbitrarily small.
\end{proof}

\begin{lem}\label{lem_weakConv}
Let $(F, \Delta)$ be a Fr{\'e}chet nuclear coproduct space and let $(\mu_n)_{n \geq 1}$ be a sequence of measures in $\PP_r(F)$ such that $\mu^*_n \rightarrow x$ weakly for some $x \in F$. Then there exists $\mu \in \PP_r(F)$ and a subsequence $(n(k))_{k \geq 1}$ such that $\mu_{n(k)} \convd \mu$ and $x = \mu^*$.
\end{lem}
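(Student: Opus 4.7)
The plan is to derive uniform tightness of $(\mu_n)_{n\geq 1}$ from the hypothesis, extract a weakly convergent subsequence by Prokhorov's theorem, and then use the coproduct inequality \eqref{eq_squareBound} to identify the barycenter of the limit.

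First, since $\mu_n^*\to x$ weakly in $F$, the sequence $(\mu_n^*)_{n\geq 1}$ is weakly bounded; as $F$ is Fr\'echet and hence barrelled, it is bounded. Lemma~\ref{lem_muCompact} then applies and yields that $\{\mu_n\}_{n\geq 1}$ is uniformly tight. Next, a Fr\'echet nuclear space is separable, so $F$ is Polish, and $G(F)$, being closed in $F$, is Polish as well. Prokhorov's theorem therefore provides a subsequence $(\mu_{n(k)})_{k\geq 1}$ that converges weakly to some Borel probability measure $\mu$ on $F$; because $G(F)$ is closed, the Portmanteau theorem forces $\mu$ to be supported on $G(F)$.

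It remains to show $\mu\in\PP_r(F)$ with $\mu^*=x$. Fix $f\in F'$. On the one hand, $\mu_{n(k)}(f)=f(\mu_{n(k)}^*)\to f(x)$ by hypothesis. On the other hand, \eqref{eq_squareBound} gives
$$\mu_{n(k)}(f^2) \;=\; (f\otimes f)\bigl(\Delta(\mu_{n(k)}^*)\bigr),$$
and since $(f\otimes f)\circ\Delta\in F'$ is continuous linear and $(\mu_{n(k)}^*)$ is bounded in $F$, the right-hand side is uniformly bounded in $k$. Hence the real random variables $f(X_{n(k)})$, with $X_{n(k)}\sim\mu_{n(k)}$, have uniformly bounded second moments and are in particular uniformly integrable. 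Combined with $\mu_{n(k)}\convd\mu$, the standard uniform-integrability strengthening of the Portmanteau theorem yields $\mu_{n(k)}(f)\to\mu(f)$. Comparing the two limits gives $\mu(f)=f(x)$ for every $f\in F'$, so $\mu$ is weakly integrable and $\mu^*=x$.

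The main obstacle is the last step: weak convergence of measures only controls expectations against bounded continuous test functions, while elements of $F'$ are generally unbounded. The coproduct bound \eqref{eq_squareBound} is the pivotal tool that turns boundedness of the barycenters into a uniform $L^2$ bound for $f(X_{n(k)})$, thereby supplying the uniform integrability needed to upgrade weak convergence to convergence of expectations against $f$ and to identify $x$ as $\mu^*$.
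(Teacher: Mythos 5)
Your proposal is correct and follows essentially the same route as the paper: boundedness of the barycenters (the paper cites the fact that weakly bounded sets are bounded), uniform tightness via Lemma~\ref{lem_muCompact} and Prokhorov on the Polish space $G(F)$, and then identification of the limit barycenter by using~\eqref{eq_squareBound} to get a uniform second-moment bound on $f(X_{n(k)})$, hence uniform integrability of the image measures and convergence of $\mu_{n(k)}(f)$ to $\mu(f)$ for every $f \in F'$. No gaps to report.
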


\begin{proof}
Recall that a Fr{\'e}chet Montel space (thus in particular a Fr{\'e}chet nuclear space) is always separable (\cite{Schaefer71} p.195), and hence Polish. As a closed subset of $F$, $G(F)$ is also Polish.

The sequence $(\mu_n^*)_{n \geq 1}$ is bounded (\cite{Rudin73} Theorem~3.18) thus there exists a convergent subsequence $\mu_{n(k)} \rightarrow \mu$ for some probability measure $\mu$ on $G(F)$ by Lemma~\ref{lem_muCompact}.

Let $f \in F'$. Since $\sup_{n \geq 1} \mu_n(f^2) = \sup_{n \geq 1} (f^{\otimes 2})(\Delta\mu_n^*) < \infty$, the sequence of image measures $(\mu_n f^{-1})_{n \geq 1}$ on $\R$ is uniformly integrable.
It follows that $f$ is $\mu$-integrable and $f(\mu_n^*) = \mu_n(f) \rightarrow \mu(f)$ (\cite{Bogachev07} Lemma~8.4.3). Thus $x = \mu^*$ and $\mu \in \PP_r(F)$.
\end{proof}

Recall that $E$ is Fr{\'e}chet and nuclear whenever $V$ is. The following is now a consequence of Lemma~\ref{lem_weakConv} and Proposition~\ref{prop_infRadUnique}.

\begin{theorem}\label{thm_methodMoments}
Let $(X_n)_{n \geq 1}$ be a sequence of $G(\R^d)$-valued random variables such that $\EEE{X_n} \in E(\R^d)$ exists (i.e., $r_2(X_n) = \infty$) for all $n \geq 1$. Suppose that $\EEE{X_n}$ converges to some $x \in E(\R^d)$ in the weak topology of $E(\R^d)$. Then there exists a unique integrable $G(\R^d)$-valued random variable $X$ in such that $X_n \convd X$ and $x = \EEE{X}$.
\end{theorem}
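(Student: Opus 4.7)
The plan is to combine Lemma~\ref{lem_weakConv} with Proposition~\ref{prop_infRadUnique} via a standard subsequence argument. First I would verify the structural hypothesis: since $\R^d$ is a Fr{\'e}chet nuclear space, Corollary~\ref{cor_permanence} and Proposition~\ref{prop_SchwartzNuclear} imply that $E(\R^d)$ is Fr{\'e}chet and nuclear, and by construction $G(\R^d)$ is closed in $E(\R^d)$, so $(E(\R^d), \Delta)$ fits the definition of a Fr{\'e}chet nuclear coproduct space. The hypothesis that $\EEE{X_n} \in E(\R^d)$ exists means each measure $\mu_n$ associated with $X_n$ lies in $\PP_r(E(\R^d))$, with barycenter $\mu_n^* = \EEE{X_n}$, and by assumption these barycenters converge weakly to $x$ in $E(\R^d)$.

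Next I would apply Lemma~\ref{lem_weakConv} directly to $(\mu_n)_{n \geq 1}$: this produces a subsequence $(n(k))_{k \geq 1}$ and a measure $\mu \in \PP_r(E(\R^d))$, supported on $G(\R^d)$, with $\mu_{n(k)} \convd \mu$ and $\mu^* = x$. Let $X$ be a random variable with law $\mu$; then $X$ is $G(\R^d)$-valued, integrable in $E(\R^d)$, and by Corollary~\ref{cor_expExists}, $\EEE{X} = \mu^* = x$, and moreover $\ExpSig(X) = x$.

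To promote subsequential convergence to convergence of the full sequence, I would invoke uniqueness through Proposition~\ref{prop_infRadUnique}. Suppose $X'$ is any other subsequential weak limit of $(X_n)$, obtained by applying the same argument (Lemma~\ref{lem_weakConv}) to an arbitrary subsequence of $(\mu_n)$: the same reasoning yields $\ExpSig(X') = x = \ExpSig(X)$, and since $x \in E(\R^d)$, Proposition~\ref{prop_infRadUnique} gives $X' \eqd X$. Hence every subsequence of $(\mu_n)$ admits a further subsequence converging weakly to the same limit $\mu$, which implies $\mu_n \convd \mu$, i.e.\ $X_n \convd X$. Uniqueness of $X$ (in law) amongst integrable $G(\R^d)$-valued random variables with $\EEE{X} = x$ is an immediate second application of Proposition~\ref{prop_infRadUnique}.

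The only mildly delicate point is ensuring that Lemma~\ref{lem_weakConv} is applicable to \emph{every} subsequence, but this is automatic: any subsequence of $(\mu_n^*)$ still converges weakly to $x$, so the hypothesis of Lemma~\ref{lem_weakConv} is inherited, and the uniqueness of $\mu$ with $\mu^* = x$ (guaranteed by Proposition~\ref{prop_infRadUnique}) is exactly what closes the subsequence argument. No further estimates are needed beyond those encapsulated in the two cited results.
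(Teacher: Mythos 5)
Your proposal is correct and follows essentially the same route as the paper, which derives the theorem directly from Lemma~\ref{lem_weakConv} together with Proposition~\ref{prop_infRadUnique} (using that $E(\R^d)$ is Fr{\'e}chet and nuclear); your write-up simply makes explicit the standard subsequence argument and the uniqueness step that the paper leaves implicit.
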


\begin{remark}
We remark that $F := P(\R) = \prod_{k \geq 0} (\R)^{\otimes k}$ is also a Fr{\'e}chet nuclear coproduct space under the product topology. Moreover the exponential map $\exp : \R \mapsto G(\R) = G(F)$ is a homeomorphism. One may then directly apply Lemma~\ref{lem_weakConv} to obtain a proof of the classical method of moments for real random variables: if $\mu_n$ are probability measures on $\R$ with finite moments $(m_n(j))_{j \geq 1}$ such that $\lim_{n \rightarrow \infty} m_n(j) = m(j)$ for every $j \geq 1$, then $(m(j))_{j \geq 1}$ are the moments of a probability measure $\mu$ on $\R$ for which $\mu_{n(k)} \convd \mu$ along a subsequence $(n(k))_{k \geq 1}$ (if $\mu$ is moment-determined then in fact $\mu_n \convd \mu$).
\end{remark}

\bibliographystyle{plain}
\bibliography{ChevyrevLyonsRefs-17-05-17PR}

\end{document}